\numberwithin{equation}{section}
\newtheorem{remark}{Remark}[section]
\title{The exponential scalar auxiliary variable (E-SAV) approach for phase field models and its explicit computing.
        \thanks{
We would like to acknowledge the assistance of volunteers in putting together this example manuscript and supplement. This work is supported by the Postdoctoral Science Foundation of China under grant numbers BX20190187 and 2019M650152, by National Natural Science Foundation of China (Grant Nos: 11901489, 11971276).}}
      \author{Zhengguang Liu\textsuperscript{*}
             \thanks{Corresponding author: School of Mathematics and Statistics, Shandong Normal University, Jinan, China. Email: liuzhgsdu@yahoo.com}.
                                       \and
             Xiaoli Li
             \thanks{Fujian Provincial Key Laboratory on Mathematical Modeling and High Performance Scientific Computing and School of Mathematical Sciences, Xiamen University, Xiamen, Fujian, 361005, China. Email: xiaolisdu@163.com}. }
\begin{document}

\maketitle

\begin{abstract}
In this paper, we consider an exponential scalar auxiliary variable (E-SAV) approach to obtain energy stable schemes for a class of phase field models. This novel auxiliary variable method based on exponential form of nonlinear free energy potential is more effective and applicable than the traditional SAV method which is very popular to construct energy stable schemes. The first contribution is that the auxiliary variable without square root removes the bounded from below restriction of the nonlinear free energy potential. Then, we prove the unconditional energy stability for the semi-discrete schemes carefully and rigorously. Another contribution is that we can discrete the auxiliary variable combined with the nonlinear term totally explicitly. Such modification is very efficient for fast calculation. Furthermore, the positive property of $r$ can be guaranteed which is very important and reasonable for the models' equivalence. Besides, for complex phase field models with two or more unknown variables and nonlinear terms, we construct a multiple E-SAV (ME-SAV) approach to enhance the applicability of the proposed E-SAV approach. A comparative study of classical SAV and E-SAV approaches is considered to show the accuracy and efficiency. Finally, we present various 2D numerical simulations to demonstrate the stability and accuracy.
\end{abstract}

\begin{keywords}
Phase field models, scalar auxiliary variable, exponential form, energy stability, numerical simulations.
\end{keywords}

    \begin{AMS}
         65M12; 35K20; 35K35; 35K55; 65Z05.
    \end{AMS}

\pagestyle{myheadings}
\thispagestyle{plain}
\markboth{ZHENGGUANG LIU and XIAOLI LI} {E-SAV approach for phase field models}
  \section{Introduction}
The phase field models are very important equations in physics, material science and mathematics \cite{ambati2015review,guo2015thermodynamically,liu2019efficient,marth2016margination,miehe2010phase,shen2015efficient,wheeler1992phase,wheeler1993computation}. They have been widely used in many fields such as alloy casting, new material preparation, image processing, finance and so on. The phase field model can simulate many physical phenomena, such as the formation process of snowflakes, the dendrite structure formed by water freezing, and the cellular or dendrite structure formed in the welding process, etc. It is helpful to understand the nature and the formation mechanism of various materials and the preparation of new materials. It is of great practical significance to develop new technologies.

In general, mathematically, the phase field models are always derived from the functional variation of free energy which can be written explicitly as follows \cite{ShenA}:
\begin{equation}\label{intro-e1}
E(\phi)=(\phi,\mathcal{L}\phi)+E_1(\phi)=(\phi,\mathcal{L}\phi)+\int_\Omega F(\phi)d\textbf{x},
\end{equation}
where $\mathcal{L}$ is a symmetric non-negative linear operator and $E_1(\phi)$ is nonlinear but with only lower-order derivatives than $\mathcal{L}$. $F(x)$ is the energy density function.

The phase field models from the energetic variation of the above energy functional $E(\phi)$ can be obtained as follows:
\begin{equation}\label{intro-e2}
\frac{\partial\phi}{\partial t}=\mathcal{G}\frac{\delta E}{\delta \phi},
\end{equation}
where $\frac{\delta E}{\delta \phi}$ is variational derivative. $\mathcal{G}$ is a non-positive operator. For example, $\mathcal{G}=-I$ for the Allen-Cahn type system and $\mathcal{G}=\Delta$ for the Cahn-Hilliard type system for the Ginzburg-Landau double-well type potential $F$. The system satisfies the following energy dissipation law naturally:
\begin{equation*}
\frac{d}{dt}E=(\frac{\delta E}{\delta \phi},\frac{\partial\phi}{\partial t})=(\mathcal{G}\frac{\delta E}{\delta \phi},\frac{\delta E}{\delta \phi})\leq0.
\end{equation*}

Energy dissipation law is a very important property for phase field models in physics and mathematics. Thus, this property is essential for numerical schemes. That is to say, the discrete energy of the proposed numerical discrete schemes should also maintain dissipative properties. Up to now, many scholars considered a series of efficient and popular time discretization approaches to construct energy stable schemes for phase field models such as convex splitting approach \cite{eyre1998unconditionally,shen2012second,shin2016first}, linear stabilized approach \cite{shen2010numerical,yang2017numerical}, exponential time differencing (ETD) approach \cite{du2019maximum,WangEfficient}, invariant energy quadratization (IEQ) approach \cite{chen2019efficient,chen2019fast,yang2016linear}, scalar auxiliary variable (SAV) approach \cite{xiaoli2019energy,shen2018scalar,ShenA} and so on. Specifically, the convex splitting method leads to a convex minimization problem at each time step and the scheme is unconditionally energy stable and uniquely solvable. But it still needs to solve a nonlinear system, and it is difficult to construct higher-order scheme. The linear stabilized method can effectively solve a linear system of the phase field models, but the additional stabilized term leads to additional error, which makes it difficult to construct the higher-order scheme. Both IEQ and SAV methods are unconditional energy stabilization methods which are developed in recent years. The IEQ method is inspired by Lagrange multiplier method but makes a big leap. By introducing a auxiliary variable, X. Yang et. al. \cite{YangNumerical,YangEfficient} successfully avoided the difficulty of discretization of the nonlinear term. The IEQ approach has been proven to keep many advantages such as linear, easy to obtain second order scheme and unconditional energy stability. This method has been successfully applied to the numerical simulation for many complex phase field models. The SAV method was proposed by J. Shen and his collaborators \cite{shen2018scalar,ShenA} which is another very popular and efficient approach. It is worth mentioning that SAV method keeps all the advantages of the IEQ approach. Furthermore, it weakens the assumptions of the bounded below restriction of nonlinear free energy potential which makes it to be a new important way to simulate the phase field models.

The first main contribution of this paper is that we find a proper way to get rid of the assumption of nonlinear free energy potential in SAV approach. In order to show and give a comparative study for our novel E-SAV approach, we provide below a brief review of SAV approach to construct energy stable schemes for phase field models. In general, the phase field models \eqref{intro-e2} can always be written as the following by denoting the chemical potential $\mu=\frac{\delta E}{\delta \phi}$:
\begin{equation*}
  \left\{
   \begin{array}{rll}
\displaystyle\frac{\partial \phi}{\partial t}&=&\mathcal{G}\mu,\\
\mu&=&\displaystyle\mathcal{L}\phi+F'(\phi),
   \end{array}
   \right.
\end{equation*}
subject to periodic boundary conditions or $\frac{\partial \phi}{\partial\textbf{n}}|_{\partial\Omega}=\frac{\partial \mu}{\partial\textbf{n}}|_{\partial\Omega}=0$.

The key of the SAV approach is to transform the nonlinear potential into a simple quadratic form. This transformation makes the nonlinear term much easier to handle. Assuming that $E_1(\phi)$ is bounded from below which means that there exists a constant $C$ to make $E_1(\phi)+C>0$. Define a scalar auxiliary variable
\begin{equation*}
r(t)=\sqrt{E_1(\phi)+C}=\sqrt{\int_\Omega F(\phi)d\textbf{x}+C}>0.
\end{equation*}
Then, the nonlinear functional $F'(\phi)$ can be transformed into the following equivalent formulation:
\begin{equation*}
F'(\phi)=\frac{r}{r}F'(\phi)=\frac{r}{\sqrt{E_1(\phi)+C}}F'(\phi).
\end{equation*}
Thus, an equivalent system of phase field models with scalar auxiliary variable can be rewritten as follows
\begin{equation}\label{intro-e3}
  \left\{
   \begin{array}{rll}
\displaystyle\frac{\partial \phi}{\partial t}&=&\mathcal{G}\mu,\\
\mu&=&\displaystyle\mathcal{L}\phi+\frac{r}{\sqrt{E_1(\phi)+C}}F'(\phi),\\
r_t&=&\displaystyle\frac{1}{2\sqrt{E_1(\phi)+C}}\int_{\Omega}F'(\phi)\phi_td\textbf{x}.
   \end{array}
   \right.
\end{equation}

Taking the inner products of the above equations with $\mu$, $\phi_t$ and $2r$, respectively, we obtain that the above equivalent system satisfies a modified energy dissipation law:
\begin{equation*}
\frac{d}{dt}\left[\frac12(\phi,\mathcal{L}\phi)+r^2\right]=(\mathcal{G}\mu,\mu)\leq0.
\end{equation*}

The above phase field models with SAV scheme \eqref{intro-e3} is very easy to construct linear, second order and unconditional energy stable schemes. For example, a second-order semi-discrete scheme based on the Crank-Nicolson method, reads as follows
\begin{equation}\label{intro-e4}
  \left\{
   \begin{array}{rll}
\displaystyle\frac{\phi^{n+1}-\phi^{n}}{\Delta t}&=&\mathcal{G}\mu^{n+1/2},\\
\mu^{n+1/2}&=&\displaystyle\mathcal{L}\left(\frac{\phi^{n+1}+\phi^n}{2}\right)+\frac{r^{n+1}+r^n}{2\sqrt{E_1(\tilde{\phi}^{n+1/2})+C}}F'(\tilde{\phi}^{n+1/2}),\\
\displaystyle\frac{r^{n+1}-r^n}{\Delta t}&=&\displaystyle\frac{1}{2\sqrt{E_1(\tilde{\phi}^{n+1/2})+C}}\int_{\Omega}F'(\tilde{\phi}^{n+1/2})\frac{\phi^{n+1}-\phi^{n}}{\Delta t}d\textbf{x},
   \end{array}
   \right.
\end{equation}
where $\tilde{\phi}^{n+\frac{1}{2}}$ is any explicit $O(\Delta t^2)$ approximation for $\phi(t^{n+\frac{1}{2}})$, which can be flexible according to the problem.

It is not difficult to prove that the above scheme is unconditionally energy stable in the sense that
\begin{equation*}
\aligned
\left[\frac12(\mathcal{L}\phi^{n+1},\phi^{n+1})+|r^{n+1}|^2\right]-\left[\frac12(\mathcal{L}\phi^{n},\phi^{n})+|r^{n}|^2\right]\leq\Delta t(\mathcal{G}\mu^{n+1/2},\mu^{n+1/2})\leq0.
\endaligned
\end{equation*}

The SAV approach has been treated as a very efficient and powerful way to construct energy stable schemes and it is easy to calculate. However, there is one obvious shortcoming that the models need to satisfy an assumption which the nonlinear free energy $E_1(\phi)$ is bounded from below. In order to ensure the correctness of the discrete scheme, we have to give a very big positive $C$ before calculation. However, It is observed that the $C$ value seems to have an influence on the accuracy of the simulation results. Relative study can be found in \cite{lin2019numerical}. In their study, they found that the error histories for $C=0.01$ and $C=500$ exhibit quite different characteristics. The error corresponding to $C$ decreases quickly but the error corresponding to $C=500$ decreases extremely slowly at this stage. To enhance the applicability of the SAV method, we aim to find reasonable procedure to avoid using a estimated number $C$ during the calculation. We consider an E-SAV method by using the constant positive properties of exponential functions to obtain energy stable schemes. We prove the unconditional energy stability for the semi-discrete schemes carefully and rigorously. The second contribution is that the discrete scheme based on E-SAV approach is very easy to construct explicit numerical scheme. Such modification is very efficient for fast calculation. Besides, for complex phase field models with two or more unknown variables and nonlinear terms, we construct a multiple E-SAV (ME-SAV) approach to enhance the applicability of the proposed E-SAV approach. A comparative study of classical SAV and E-SAV approaches is considered to show the accuracy and efficiency. Finally, we present various 2D numerical simulations to demonstrate the stability and accuracy.

In summary, the constructed E-SAV approach has the following five advantages compared with the recently proposed SAV approach:

$(i)$ The E-SAV approach does not need any assumptions while the nonlinear free energy potential has to be bounded from below in SAV approach;

$(ii)$ The novel auxiliary variable $r$ and $r^n$ are always positive in E-SAV schemes. However, such positive property of $r$ and $r^n$ can not be guaranteed in SAV approach;

$(iii)$ The totally explicit schemes of the auxiliary variable combined with the nonlinear term with unconditionally energy stability based on the E-SAV approach can be constructed easily while such explicit schemes are not energy stable for SAV approach;

$(iv)$ The computations of $\phi$ and the auxiliary variable $r$ can be solved step-by-step based on E-SAV approach while we have to compute an inner product previously to obtain $\phi^{n+1}$ in the SAV schemes;

$(v)$ Schemes based on the E-SAV approach dissipate the original energy, as opposed to a modified energy in the SAV approach.

The paper is organized as follows. In Sect.2, we introduce the E-SAV approach for phase field models and give two numerical discrete schemes. Then, we prove the unconditional energy stability for the semi-discrete scheme. In Sect.3, the E-SAV approach of the phase field models of several functions are considered. In Sect.4, considering that the exponential function is a rapidly increasing function which carries the risk of failure for the E-SAV approach, we give some modified technique to improve the scope of application. To enhance the applicability of the proposed E-SAV approach for complex phase field models, we construct a multiple E-SAV approach in Sect.5. In the last Sect.6, various 2D numerical simulations are demonstrated to verify the accuracy and efficiency of our proposed schemes.

\section{E-SAV approach for phase field models}
In this section, we will consider an E-SAV approach for phase field models to construct energy stable numerical schemes. Exponential function is a special function that keeps the range constant positive. Thus, we introduce an exponential scalar auxiliary variable (E-SAV):
\begin{equation}\label{esav-e1}
\aligned
r(t)=\exp\left(E_1(\phi)\right)=\exp\left(\int_\Omega F(\phi)d\textbf{x}\right).
\endaligned
\end{equation}
It is obviously $r(t)>0$ for any $t$. Then, the nonlinear functional $F'(\phi)$ can be transformed as the following equivalent formulation:
\begin{equation*}
F'(\phi)=\frac{r}{r}F'(\phi)=\frac{r}{\exp\left(E_1(\phi)\right)}F'(\phi).
\end{equation*}
By taking derivative of \eqref{esav-e1} with respect to $t$ and replacing $F'(\phi)$ with the above expression, we obtain
\begin{equation*}
\frac{d r}{dt}=r\int_{\Omega}F'(\phi)\phi_td\textbf{x}=\displaystyle\frac{r^2}{\exp\left(E_1(\phi)\right)}\int_{\Omega}F'(\phi)\phi_td\textbf{x}.
\end{equation*}
Thus, \eqref{intro-e4} can be rewritten as the following equivalent system:
\begin{equation}\label{esav-e2}
  \left\{
   \begin{array}{rll}
\displaystyle\frac{\partial \phi}{\partial t}&=&\mathcal{G}\mu,\\
\mu&=&\displaystyle\mathcal{L}\phi+\frac{r}{\exp\left(E_1(\phi)\right)}F'(\phi),\\
r_t&=&\displaystyle\frac{r^2}{\exp\left(E_1(\phi)\right)}\int_{\Omega}F'(\phi)\phi_td\textbf{x}.
   \end{array}
   \right.
\end{equation}
To simplify the notations, we define
\begin{equation*}
b^{r,\phi}=\frac{r}{\exp\left(E_1(\phi)\right)}F'(\phi).
\end{equation*}
Then, the above system \eqref{esav-e2} can be transformed as follows:
\begin{equation}\label{esav-e3}
  \left\{
   \begin{array}{rll}
\displaystyle\frac{\partial \phi}{\partial t}&=&\mathcal{G}\mu,\\
\mu&=&\displaystyle\mathcal{L}\phi+b^{r,\phi},\\
r_t&=&r(b^{r,\phi},\phi_t).
   \end{array}
   \right.
\end{equation}
Taking the inner products of the first two equations with $\mu$ and $\phi_t$ in \eqref{esav-e3} respectively, we obtain that
\begin{equation}\label{esav-e4}
\displaystyle\left(\frac{\partial \phi}{\partial t},\mu\right)=(\mathcal{G}\mu,\mu)\leq0,
\end{equation}
and
\begin{equation}\label{esav-e5}
\displaystyle\left(\frac{\partial \phi}{\partial t},\mu\right)=\displaystyle\frac12\frac{d}{dt}(\mathcal{L}\phi,\phi)+(b^{r,\phi},\phi_t).
\end{equation}

For the third equation in \eqref{esav-e3}, noting that $r>0$, then, it can be transformed as follows:
\begin{equation}\label{esav-e6}
\displaystyle \frac{d\ln(r)}{dt}=(b^{r,\phi},\phi_t).
\end{equation}

Combining the equations \eqref{esav-e4}-\eqref{esav-e5} with equation \eqref{esav-e6}, we obtain the following energy dissipation law:
\begin{equation*}
\frac{d}{dt}\left[\frac12(\mathcal{L}\phi,\phi)+\ln(r)\right]=(\mathcal{G}\mu,\mu)\leq0.
\end{equation*}

\begin{remark}\label{esav-re0}
For SAV approach in \eqref{intro-e4}, the modified energy dissipation law is not equal to the original one because of $\left[\frac12(\mathcal{L}\phi^{n},\phi^{n})+|r^{n}|^2\right]=E(\phi)+C$. However, notice that $\ln(r)=\ln(\exp(E_1(\phi)))=E_1(\phi)$. Thus, we have $\frac12(\mathcal{L}\phi,\phi)+\ln(r)=E(\phi)$ which means the above energy inequality is totally equal to the original energy dissipation law.
\end{remark}

Next, we will consider some numerical schemes to illustrate that the proposed E-SAV approach is very easy to obtain linear and unconditionally energy stable schemes. More importantly, it can be found that both first-order and second-order explicit numerical schemes with unconditionally energy stability can be constructed easily.

Before giving a semi-discrete formulation, we let $N>0$ be a positive integer and set
\begin{equation*}
\Delta t=T/N,\quad t^n=n\Delta t,\quad \text{for}\quad n\leq N.
\end{equation*}
\subsection{The first-order scheme}
A first order scheme for solving the system \eqref{esav-e2} can be readily derived by the backward Euler¡¯s method. The first-order scheme can be written as follows:
\begin{equation}\label{esav-first-e1}
  \left\{
   \begin{array}{rll}
\displaystyle\frac{\phi^{n+1}-\phi^{n}}{\Delta t}&=&\mathcal{G}\mu^{n+1},\\
\mu^{n+1}&=&\displaystyle\mathcal{L}\phi^{n+1}+b^{r^n,\phi^n},\\
\displaystyle\frac{\ln(r^{n+1})-\ln(r^n)}{\Delta t}&=&\displaystyle\left(b^{r^n,\phi^n},\frac{\phi^{n+1}-\phi^{n}}{\Delta t}\right),
   \end{array}
   \right.
\end{equation}
Multiplying the first two equations in \eqref{esav-first-e1} with $\mu^{n+1}$ and $(\phi^{n+1}-\phi^{n})/\Delta t$, combining them with the third equation in \eqref{esav-first-e1}, and noting the following equation
$$(a-b,a)=\frac12|a|^2-\frac12|b|^2+\frac12|a-b|^2$$
we obtain the discrete energy law:
\begin{equation}\label{esav-first-e2}
\aligned
\frac{1}{\Delta t}\left[E_{1st}^{n+1}-E^{n}_{1st}\right]\leq(\mathcal{G}\mu^{n+1},\mu^{n+1})-\frac{1}{2\Delta t}(\phi^{n+1}-\phi^n,\mathcal{L}(\phi^{n+1}-\phi^n))\leq0,
\endaligned
\end{equation}
where the modified discrete version of the energy is defined by
\begin{equation*}
\aligned
E_{1st}^{n}=\frac12(\phi^n,\mathcal{L}\phi^{n})+\ln(r^n).
\endaligned
\end{equation*}

\begin{remark}
The first-order E-SAV scheme \eqref{esav-first-e1} is much easier to implement than classical SAV. Because the totally explicit computing of $\phi^{n+1}$ can be achieved. Furthermore, the implicit processing of $r^{n+1}$ ensures the energy stability of the discrete format. For SAV scheme which can be seen in \cite{ShenA}, we have to compute inner product $(b^n,\phi^{n+1})$ before obtaining $\phi^{n+1}$ where $b^n=F'(\phi^n)/\sqrt{E_1(\phi^n)}$. However, for E-SAV scheme, we do not need to do this. We can compute $\phi^{n+1}$ directly by the first two equations in \eqref{esav-first-e1}, then $r^{n+1}$ can be very easy to obtain by computing $\left(b^{r^n,\phi^n},\phi^{n+1}-\phi^{n}\right)$. That is to say, the computations of $\phi$ and $r$ are totally decoupled. Thus, Compared with SAV algorithm, the E-SAV algorithm greatly simplifies the calculation which is conducive to rapid simulation.
\end{remark}

In particular, the first two equation in \eqref{esav-first-e1} can be written as:
\begin{equation}\label{esav-first-e3}
\aligned
(I-\Delta t\mathcal{G}\mathcal{L})\phi^{n+1}=\phi^n+\Delta t\mathcal{G}b^{r^n,\phi^n}.
\endaligned
\end{equation}
Multiplying \eqref{esav-first-e3} with $(I-\Delta t\mathcal{G}\mathcal{L})^{-1}$, we can obtain $\phi^{n+1}$ directly:
\begin{equation}\label{esav-first-e4}
\aligned
\phi^{n+1}=(I-\Delta t\mathcal{G}\mathcal{L})^{-1}\phi^n+\Delta t(I-\Delta t\mathcal{G}\mathcal{L})^{-1}\mathcal{G}b^{r^n,\phi^n}.
\endaligned
\end{equation}
Substitute equation \eqref{esav-first-e4} into the third equation in \eqref{esav-first-e3}, we can compute $r^{n+1}$:
\begin{equation}\label{esav-first-e5}
r^{n+1}=\displaystyle\exp\left[\ln(r^n)+\left(b^{r^n,\phi^n},\phi^{n+1}-\phi^{n}\right)\right].
\end{equation}

\begin{remark}
The logarithmic function in equation \eqref{esav-e6}  guarantees the positive property of the auxiliary variable $r$. Meanwhile, for the discrete scheme, the exponential function in equation \eqref{esav-first-e5} guarantees the constant positive property of $r^{n+1}$, which makes $\ln(r^{n+1})$ reasonable to obtain $r^{n+2}$.
\end{remark}

To summarize, we implement \eqref{esav-first-e1} as follows:

1. Compute $r^n$ and $\phi^n$;

2. Compute $b^{r^n,\phi^n}$ from $\displaystyle b^{r^n,\phi^n}=\frac{r^n}{\exp\left(E_1(\phi^n)\right)}F'(\phi^n)$;

3. Compute $\phi^{n+1}$ from \eqref{esav-first-e4};

4. Compute $r^{n+1}$ from \eqref{esav-first-e5}.

\subsection{The second-order scheme}
A linear, second-order, sequentially solved and unconditionally stable E-SAV scheme is also very easy to constructed.  A semi-implicit E-SAV scheme based on the second order Crank-Nicolson formula (CN) for \eqref{esav-e3} reads as: for $n\geq1$,
\begin{equation}\label{esav-second-e1}
  \left\{
   \begin{array}{rll}
\displaystyle\frac{\phi^{n+1}-\phi^{n}}{\Delta t}&=&\mathcal{G}\mu^{n+\frac12},\\
\mu^{n+\frac12}&=&\displaystyle\mathcal{L}\frac{\phi^{n+1}+\phi^{n}}{2}+b^{\widetilde{r}^{n+\frac12},\widetilde{\phi}^{n+\frac12}},\\
\displaystyle\frac{\ln(r^{n+1})-\ln(r^n)}{\Delta t}&=&\displaystyle\left(b^{\widetilde{r}^{n+\frac12},\widetilde{\phi}^{n+\frac12}},\frac{\phi^{n+1}-\phi^{n}}{\Delta t}\right),
   \end{array}
   \right.
\end{equation}
where $\tilde{\phi}^{n+\frac{1}{2}}$ is any explicit $O(\Delta t^2)$ approximation for $\phi(t^{n+\frac{1}{2}})$, and $\tilde{r}^{n+\frac{1}{2}}$ is any explicit $O(\Delta t^2)$ approximation for $r(t^{n+\frac{1}{2}})$, which can be flexible according to the problem. Here, we choose
\begin{equation}\label{esav-second-e2}
\aligned
&\tilde{\phi}^{n+\frac{1}{2}}=\frac32\phi^n-\frac12\phi^{n-1}, \quad n\geq1,\\
&\tilde{r}^{n+\frac{1}{2}}=\frac32r^n-\frac12r^{n-1}, \quad n\geq1,
\endaligned
\end{equation}
and for $n=0$, we compute $\widetilde{\phi}^{\frac{1}{2}}$ as follows:
\begin{equation}\label{esav-second-e3}
\aligned
&\displaystyle\frac{\widetilde{\phi}^{\frac{1}{2}}-\phi^0}{(\Delta t)/2}=\mathcal{G}\left[\mathcal{L}\widetilde{\phi}^{\frac{1}{2}}+F^{'}(\phi^0)\right],
\endaligned
\end{equation}
which has a local truncation error of $O(\Delta t^2)$.

Then, we can compute $\widetilde{r}^{\frac{1}{2}}$ from
\begin{equation}\label{esav-second-e4}
\displaystyle\widetilde{r}^{\frac{1}{2}}=\exp\left[\int_\Omega F(\widetilde{\phi}^{\frac{1}{2}})d\textbf{x}\right].
\end{equation}

Similarly, $\phi^{n+1}$ can be solved by the following:
\begin{equation}\label{esav-second-e5}
\aligned
\phi^{n+1}=(I-\frac12\Delta t\mathcal{G}\mathcal{L})^{-1}\phi^n+\frac12(I-\frac12\Delta t\mathcal{G}\mathcal{L})^{-1}\mathcal{G}\mathcal{L}\phi^n+\Delta t(I-\Delta t\mathcal{G}\mathcal{L})^{-1}\mathcal{G}b^{\widetilde{r}^{n+\frac12},\widetilde{\phi}^{n+\frac12}}.
\endaligned
\end{equation}
Then, we can compute $r^{n+1}$ by $\phi^{n+1}$:
\begin{equation}\label{esav-second-e6}
r^{n+1}=\displaystyle\exp\left[\ln(r^n)+\left(b^{\widetilde{r}^{n+\frac12},\widetilde{\phi}^{n+\frac12}},\phi^{n+1}-\phi^{n}\right)\right].
\end{equation}
Multiplying the first two equations in \eqref{esav-second-e1} with $\mu^{n+\frac12}$ and $(\phi^{n+1}-\phi^{n})/\Delta t$, and combining  them with the third equation in \eqref{esav-second-e1}, we derive the following:
\begin{theorem}\label{esav-th1}
The scheme \eqref{esav-second-e1} for the equivalent phase field system \eqref{esav-e3} is second-order accurate, unconditionally energy stable in the sense that
\begin{equation*}
\aligned
\frac{1}{\Delta t}\left[E_{E-SAV-CN}^{n+1}-E^{n}_{E-SAV-CN}\right]\leq(\mathcal{G}\mu^{n+\frac12},\mu^{n+\frac12})\leq0.
\endaligned
\end{equation*}
where the modified discrete version of the energy is defined by
\begin{equation*}
\aligned
E_{E-SAV-CN}^{n}=\frac12(\phi^n,\mathcal{L}\phi^{n})+\ln(r^n).
\endaligned
\end{equation*}
\end{theorem}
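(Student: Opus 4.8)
The plan is to follow the energy-estimate strategy already used for the first-order scheme: test the first two equations of \eqref{esav-second-e1} against $\mu^{n+\frac12}$ and $(\phi^{n+1}-\phi^{n})/\Delta t$ respectively, and then eliminate the explicit nonlinear contribution by invoking the third equation. First I would take the inner product of the first equation with $\mu^{n+\frac12}$, giving
$$\left(\frac{\phi^{n+1}-\phi^{n}}{\Delta t},\mu^{n+\frac12}\right)=(\mathcal{G}\mu^{n+\frac12},\mu^{n+\frac12}).$$
Next I would test the second equation with $(\phi^{n+1}-\phi^{n})/\Delta t$. The linear part is handled using the symmetry of $\mathcal{L}$ together with the telescoping identity $(\mathcal{L}(\phi^{n+1}+\phi^n),\phi^{n+1}-\phi^n)=(\mathcal{L}\phi^{n+1},\phi^{n+1})-(\mathcal{L}\phi^n,\phi^n)$, which yields the clean difference $\tfrac{1}{2\Delta t}[(\mathcal{L}\phi^{n+1},\phi^{n+1})-(\mathcal{L}\phi^n,\phi^n)]$ with no positive-definite remainder. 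This is precisely where the Crank--Nicolson averaging of $\mathcal{L}$ improves on the backward-Euler case: the quadratic identity $(a-b,a)=\tfrac12|a|^2-\tfrac12|b|^2+\tfrac12|a-b|^2$ produced the leftover term in \eqref{esav-first-e2}, whereas the midpoint averaging makes that term vanish.

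Equating the two expressions for the common left-hand side and substituting the third equation of \eqref{esav-second-e1} to replace $(b^{\widetilde{r}^{n+\frac12},\widetilde{\phi}^{n+\frac12}},(\phi^{n+1}-\phi^{n})/\Delta t)$ by $(\ln(r^{n+1})-\ln(r^n))/\Delta t$, I expect to arrive at the exact identity
$$\frac{1}{\Delta t}\left[E_{E-SAV-CN}^{n+1}-E^{n}_{E-SAV-CN}\right]=(\mathcal{G}\mu^{n+\frac12},\mu^{n+\frac12}),$$
and the stated inequality then follows immediately from the non-positivity of the operator $\mathcal{G}$. The second-order accuracy is a standard consistency argument: the Crank--Nicolson discretization of $\phi_t$ and the midpoint averaging of $\mathcal{L}\phi$ are $O(\Delta t^2)$, the extrapolations $\widetilde{\phi}^{n+\frac12}$ and $\widetilde{r}^{n+\frac12}$ in \eqref{esav-second-e2} are $O(\Delta t^2)$ approximations of $\phi(t^{n+\frac12})$ and $r(t^{n+\frac12})$, and the bootstrap initialization \eqref{esav-second-e3}--\eqref{esav-second-e4} carries local truncation error $O(\Delta t^2)$.

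The step I expect to be the crux is the elimination of the nonlinear term. The whole argument hinges on the fact that the \emph{same} explicit quantity $b^{\widetilde{r}^{n+\frac12},\widetilde{\phi}^{n+\frac12}}$ appears both in the chemical-potential equation and in the auxiliary-variable update. Because these two occurrences are identical, the nonlinear contribution produced by testing the $\mu$-equation is matched term-for-term by the increment of $\ln(r)$ coming from the third equation, so it cancels exactly regardless of the choice of extrapolation. This is the mechanism that permits a fully explicit treatment of the nonlinearity without sacrificing unconditional stability; no convexity, boundedness-from-below, or smallness assumption on $F$ is needed. Since the Crank--Nicolson averaging also leaves no positive-definite remainder, the discrete energy law is in fact an equality, which is stronger than the asserted inequality.
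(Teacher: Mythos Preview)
Your proposal is correct and follows essentially the same approach as the paper: the paper's proof consists precisely of the sentence ``Multiplying the first two equations in \eqref{esav-second-e1} with $\mu^{n+\frac12}$ and $(\phi^{n+1}-\phi^{n})/\Delta t$, and combining them with the third equation in \eqref{esav-second-e1}, we derive the following,'' and you have expanded exactly this argument in detail, including the correct observation that the Crank--Nicolson averaging yields an equality rather than merely an inequality.
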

\begin{remark}\label{esav-re1}
The second-order E-SAV scheme \eqref{esav-second-e1} based on Crank-Nicolson can be implemented sequentially as follows: (i) Compute the initial values of $\phi^0$ and $r^0$; (ii) Compute $\tilde{\phi}^{\frac{1}{2}}$ from \eqref{esav-second-e3} and $\tilde{r}^{\frac{1}{2}}$ from \eqref{esav-second-e4}; (iii) Compute $\phi^1$ from \eqref{esav-second-e5}; (iv) Compute $r^1$ from \eqref{esav-second-e6}; (v) Compute $\phi^n$ from \eqref{esav-second-e5} for $n\geq2$; (vi) Compute $r^n$ from \eqref{esav-second-e6} for $n\geq2$.
\end{remark}
\section{E-SAV approach for phase field models of several functions}
In this section, we consider the E-SAV approach for phase field models of multiple functions $\phi_1$, $\phi_2$, $\cdots$, $\phi_k$ $(k\geq2)$. The energy functional will be \cite{ShenA}:
\begin{equation}\label{se3-esav-e1}
E(\phi_1,\phi_2,\ldots,\phi_k)=\sum\limits_{i,j=1}^kd_{i,j}(\mathcal{L}\phi_i,\phi_j)+\sum\limits_{j=1}^k\int_\Omega F(\phi_j)d\textbf{x},
\end{equation}
where $\mathcal{L}$ is a self-adjoint non-negative linear operator, the constant matrix $A=(d_{i,j})$ is symmetric positive definite.

Some applications involve coupled linear operators which render the phase field models of several functions very difficult to solve numerically by existing methods. By introducing a scalar auxiliary variable, SAV approach can solve this problem very efficiently. In this section, we try to use E-SAV approach to obtain a more efficient and easier algorithm to solve phase field models of multiple functions.

We set $E_1(\phi)=\sum\limits_{j=1}^k\int_\Omega F(\phi_j)d\textbf{x}$, then introduce an exponential scalar auxiliary variable:
\begin{equation}\label{se3-esav-e2}
\aligned
r(t)=\exp\left(E_1(\phi_1,\phi_2,\cdots,\phi_k)\right)=\exp\left(\sum\limits_{j=1}^k\int_\Omega F(\phi_j)d\textbf{x}\right).
\endaligned
\end{equation}
Then, we can obtain the phase field models from the energetic variation of the energy functional $E(\phi)$ in \eqref{se3-esav-e1} as follows:
\begin{equation}\label{se3-esav-e3}
  \left\{
   \begin{array}{rll}
\displaystyle\frac{\partial \phi_i}{\partial t}&=&\mathcal{G}\mu_i,\\
\mu_i&=&\displaystyle2\sum\limits_{j=1}^kd_{i,j}\mathcal{L}\phi_j+\frac{r}{\exp\left(E_1(\phi_1,\phi_2,\cdots,\phi_k)\right)}F'_i,\\
\displaystyle\frac{d\ln r}{dt}&=&\displaystyle\frac{r}{\exp\left(E_1(\phi_1,\phi_2,\cdots,\phi_k)\right)}\int_\Omega F'(\phi_i)\frac{\partial \phi_i}{\partial t}d\textbf{x}.
   \end{array}
   \right.
\end{equation}
To simplify the notations, we define
\begin{equation*}
b^{r,\phi_i}=\frac{r}{\exp\left(E_1(\phi_1,\phi_2,\cdots,\phi_k)\right)}F_i'.
\end{equation*}

Then, the above system \eqref{se3-esav-e3} can be transformed as follows:
\begin{equation}\label{se3-esav-e4}
  \left\{
   \begin{array}{rll}
\displaystyle\frac{\partial \phi_i}{\partial t}&=&\mathcal{G}\mu_i,\\
\mu_i&=&\displaystyle2\sum\limits_{j=1}^kd_{i,j}\mathcal{L}\phi_j+b^{r,\phi_i},\\
\displaystyle\frac{d\ln r}{dt}&=&\displaystyle\left(b^{r,\phi_i},\frac{\partial \phi_i}{\partial t}\right).
   \end{array}
   \right.
\end{equation}
Taking the inner products of the first two equations with $\mu_i$ and $\frac{d\phi_i}{dt}$ in \eqref{se3-esav-e4} respectively, combining them with the third equation in \eqref{se3-esav-e4} and summing over $i$, we obtain that
\begin{equation*}
\frac{d}{dt}\left[\sum\limits_{i,j=1}^kd_{i,j}(\mathcal{L}\phi_i,\phi_j)+\ln r\right]=\frac{d}{dt}E(\phi_1,\phi_2,\ldots,\phi_k)=\sum\limits_{i=1}^k(\mathcal{G}\mu_i,\mu_i)\leq0.
\end{equation*}

A linear, second-order, sequentially solved and unconditionally stable E-SAV scheme based on Crank-Nicolson formula can be constructed as follows:
\begin{equation}\label{se3-esav-e5}
  \left\{
   \begin{array}{rll}
\displaystyle\frac{\phi^{n+1}_i-\phi^{n}_i}{\Delta t}&=&\mathcal{G}\mu^{n+\frac12}_i,\\
\mu^{n+\frac12}_i&=&\displaystyle\sum\limits_{j=1}^kd_{i,j}\mathcal{L}(\phi^{n+1}_j+\phi^n_j)+b^{\widetilde{r}^{n+\frac12},\widetilde{\phi}_i^{n+\frac12}},\\
\displaystyle\frac{\ln(r^{n+1})-\ln(r^n)}{\Delta t}&=&\displaystyle\left(b^{\widetilde{r}^{n+\frac12},\widetilde{\phi}_i^{n+\frac12}},\frac{\phi^{n+1}_i-\phi^{n}_i}{\Delta t}\right),
   \end{array}
   \right.
\end{equation}
where $\tilde{\phi}_i^{n+\frac{1}{2}}$ is any explicit $O(\Delta t^2)$ approximation for $\phi_i(t^{n+\frac{1}{2}})$, and $\tilde{r}^{n+\frac{1}{2}}$ is any explicit $O(\Delta t^2)$ approximation for $r(t^{n+\frac{1}{2}})$, which can be flexible according to the problem.

Multiplying the first two equations in \eqref{se3-esav-e5} with $\mu_i^{n+\frac12}$ and $(\phi_i^{n+1}-\phi_i^{n})/\Delta t$, and combining them with the third equation in \eqref{se3-esav-e5} and sum over $i$, we derive the following discrete energy law:
\begin{equation*}
\frac{1}{\Delta t}\left[\sum\limits_{i,j=1}^kd_{i,j}(\mathcal{L}\phi^{n+1}_i,\phi^{n+1}_j)+\ln(r^{n+1})-\sum\limits_{i,j=1}^kd_{i,j}(\mathcal{L}\phi^{n}_i,\phi^{n}_j)-\ln(r^{n})\right]=\sum\limits_{i=1}^k(\mathcal{G}\mu_i^{n+\frac12},\mu_i^{n+\frac12})\leq0.
\end{equation*}

Next, we describe how the scheme \eqref{se3-esav-e5} can be efficiently implemented. Denote
\begin{equation}\label{se3-esav-e6}
\aligned
&\Phi^{n}=(\phi_1^n,\phi_2^n,\ldots,\phi_k^n),\quad B^{n}=(b^{\widetilde{r}^{n+\frac12},\widetilde{\phi}_1^{n+\frac12}},b^{\widetilde{r}^{n+\frac12},\widetilde{\phi}_2^{n+\frac12}},\ldots,b^{\widetilde{r}^{n+\frac12},\widetilde{\phi}_k^{n+\frac12}}).
\endaligned
\end{equation}
Then, $\Phi^{n+1}$ can be computed by the following equation:
\begin{equation}\label{se3-esav-e7}
\aligned
\Phi^{n+1}=(I-\Delta t\mathcal{G}A\mathcal{L})^{-1}\Phi^n+\frac12(I-\Delta t\mathcal{G}A\mathcal{L})^{-1}\mathcal{G}A\mathcal{L}\Phi^n+\Delta t(I-\Delta t\mathcal{G}A\mathcal{L})^{-1}\mathcal{G}B^{n}.
\endaligned
\end{equation}
Then, we can compute $r^{n+1}$ by any $\phi_i^{n+1}$:
\begin{equation}\label{se3-esav-e8}
r^{n+1}=\displaystyle\exp\left[\ln(r^n)+\left(b^{\widetilde{r}^{n+\frac12},\widetilde{\phi}_i^{n+\frac12}},\phi_i^{n+1}-\phi_i^{n}\right)\right].
\end{equation}
\begin{remark}\label{esav-re2}
The second-order E-SAV scheme \eqref{se3-esav-e5} based on Crank-Nicolson can be implemented sequentially as follows: (i) Compute the initial values of $\phi_i^0$ for $i=1,\ldots, k$ and $r^0$; (ii) Compute $\tilde{\phi}_i^{\frac{1}{2}}$ for $i=1,\ldots, k$ and $\tilde{r}^{\frac{1}{2}}$; (iii) Compute $\Phi^1$ from \eqref{se3-esav-e7}; (iv) Compute $r^1$ from \eqref{se3-esav-e8}; (v) Compute $\Phi^n$ from \eqref{se3-esav-e7} for $n\geq2$; (iv) Compute $r^n$ from \eqref{se3-esav-e8} for $n\geq2$.
\end{remark}

From above remark, it is not difficult to find that $\Phi^{n+1}$ and $r^{n+1}$ can be computed sequentially, we do not need to compute the inner products $(b^{\widetilde{r}^{n+\frac12},\widetilde{\phi}_i^{n+\frac12}},\phi_i^{n+1})$ for $i=1,\ldots, k$ such as classical SAV scheme before obtaining $\Phi^{n+1}$ which simplified the calculation greatly.
\section{Modified E-SAV approach}
In calculation, we notice that the exponential function is a rapidly increasing function which carries the risk of failure for the E-SAV approach. In this section, by adding a positive constant $C$ in exponential scalar auxiliary variable, we can improve it greatly. In particular, define a new exponential scalar auxiliary variable:
\begin{equation}\label{mesav-e1}
\aligned
r(t)=\exp\left(\frac{E_1(\phi)}{C}\right)=\exp\left(\frac1C\int_\Omega F(\phi)d\textbf{x}\right).
\endaligned
\end{equation}

Then, the phase field system \eqref{esav-e2} can be transformed as follows:
\begin{equation}\label{mesav-e2}
  \left\{
   \begin{array}{rll}
\displaystyle\frac{\partial \phi}{\partial t}&=&\mathcal{G}\mu,\\
\mu&=&\displaystyle\mathcal{L}\phi+b^{r,\phi},\\
\displaystyle\frac{d\ln r}{dt}&=&\displaystyle\frac1C(b^{r,\phi},\phi_t),
   \end{array}
   \right.
\end{equation}
where we set $b^{r,\phi}=[rF'(\phi)]/\exp\left(E_1(\phi)/C\right)$.

Taking the inner products of the above first two equations with $\mu$, $\phi_t$, respectively, and combining them with the third equation, we obtain that the above equivalent system satisfies a modified energy dissipation law:
\begin{equation*}
\frac{d}{dt}\left[\frac12(\phi,\mathcal{L}\phi)+C\ln r\right]=(\mathcal{G}\mu,\mu)\leq0.
\end{equation*}

Similar first-order and second-order discrete schemes can be obtained immediately. For example, the first-order E-SAV scheme can be written:
\begin{equation}\label{mesav-first-e1}
  \left\{
   \begin{array}{rll}
\displaystyle\frac{\phi^{n+1}-\phi^{n}}{\Delta t}&=&\mathcal{G}\mu^{n+1},\\
\mu^{n+1}&=&\displaystyle\mathcal{L}\phi^{n+1}+b^{r^n,\phi^n},\\
\displaystyle\frac{\ln(r^{n+1})-\ln(r^n)}{\Delta t}&=&\displaystyle\frac1C\left(b^{r^n,\phi^n},\frac{\phi^{n+1}-\phi^{n}}{\Delta t}\right).
   \end{array}
   \right.
\end{equation}

The positive constant $C$ is not difficult to obtain. For phase field models, the dissipative energy law means $\frac{d}{dt}E(\phi)\leq0$. Then, an obvious property will hold as follows
\begin{equation}\label{mesav-e3}
E(\phi(\textbf{x},0))\geq E(\phi(\textbf{x},t)), \quad \forall \textbf{x}\in\Omega,t\geq0.
\end{equation}

Considering the definition of the energy and noting that $\mathcal{L}$ is a symmetric non-negative linear operator, it is not difficult to obtain the following inequality
\begin{equation}\label{mesav-e4}
E(\phi(\textbf{x},0))-E_1(\phi(\textbf{x},t))=E(\phi(\textbf{x},0))-E(\phi(\textbf{x},t))+(\phi,\mathcal{L}\phi)\geq(\phi,\mathcal{L}\phi)\geq0, \quad \forall \textbf{x}\in\Omega,t\geq0.
\end{equation}

Thus, $C=|E_1(\phi(\textbf{x},0))|$, $C=|E(\phi(\textbf{x},0))|$ and a very big positive constant will satisfy requirements.

\section{Multiple E-SAV approach}
Many complex phase field models include two or more unknown variables and nonlinear terms. A single scalar auxiliary variable cannot adequately describe the two or more evolution processes. In \cite{cheng2018multiple}, the authors consider multiple SAV approach for phase-field vesicle membrane model. To enhance the applicability of the proposed E-SAV approach, we construct multiple E-SAV (ME-SAV) approach in this section in a general setting.

Mathematically, the complex phase field model is derived from the functional variation of free energy. In general, the free energy $E(\phi)$ contains the sum of an integral phase of some nonlinear functionals and a quadratic term:
\begin{equation*}
E(\phi)=\frac12(\phi,\mathcal{L}\phi)+\int_\Omega\sum\limits_{i=1}^kF_i(\phi)d\textbf{x},
\end{equation*}
where $\mathcal{L}$ is a symmetric non-negative linear operator. Denote the chemical potential $\mu=\frac{\delta E}{\delta \phi}$, then, the phase field models from the energetic variation of the energy functional $E(\phi)$ can be obtained as follows:
\begin{equation}\label{me-sav-e1}
  \left\{
   \begin{array}{rll}
\displaystyle\frac{\partial \phi}{\partial t}&=&\mathcal{G}\mu,\\
\mu&=&\displaystyle\mathcal{L}\phi+\sum\limits_{i=1}^kF_i'(\phi).
   \end{array}
   \right.
\end{equation}
Introduce the following exponential scalar auxiliary variables
\begin{equation}\label{me-sav-e2}
r_i(t)=\exp\left(\frac{1}{C}\int_\Omega F_i(\phi)d\textbf{x}\right),
\end{equation}
where $C$ is a big constant to make $r_i(t)$ be a not very big number for every $t$. Similar as before, an equivalent system of \eqref{me-sav-e1} can be written as follows:
\begin{equation}\label{me-sav-e3}
  \left\{
   \begin{array}{rll}
\displaystyle\frac{\partial \phi}{\partial t}&=&\mathcal{G}\mu,\\
\mu&=&\displaystyle\mathcal{L}\phi+\sum\limits_{i=1}^kb_i^{r_i,\phi}\\
\displaystyle\frac{d\ln r_i}{dt}&=&\displaystyle\frac{1}{C}\left(b_i^{r_i,\phi},\frac{\partial \phi}{\partial t}\right)\\
b_i^{r_i,\phi}&=&\displaystyle\frac{r_i(t)}{\exp(\frac{1}{C}\int_\Omega F_i(\phi)d\textbf{x})}F_i'(\phi).
   \end{array}
   \right.
\end{equation}
Taking the inner products of the above first two equations with $\mu$, $\phi_t$, respectively, and summing up for $i$ from 1 to $k$ for the third equation, we obtain that the above equivalent system satisfies a modified energy dissipation law:
\begin{equation*}
\frac{d}{dt}\left[\frac12(\phi,\mathcal{L}\phi)+C\sum\limits_{i=1}^k\ln r_i\right]=(\mathcal{G}\mu,\mu)\leq0.
\end{equation*}

The first order scheme derived by the backward Euler¡¯s method and the second order scheme based on Crank-Nicolson formula are very easy to obtain. In detail, the explicit first order scheme is as follows:
\begin{equation}\label{me-sav-first-e1}
  \left\{
   \begin{array}{rll}
\displaystyle\frac{\phi^{n+1}-\phi^{n}}{\Delta t}&=&\mathcal{G}\mu^{n+1},\\
\mu^{n+1}&=&\displaystyle\mathcal{L}\phi^{n+1}+\sum\limits_{i=1}^kb_i^{r_i^n,\phi^n},\\
\displaystyle\frac{\ln(r_i^{n+1})-\ln(r_i^n)}{\Delta t}&=&\displaystyle\left(b_i^{r_i^n,\phi^n},\frac{\phi^{n+1}-\phi^{n}}{\Delta t}\right),\\
b_i^{r_i^n,\phi^n}&=&\displaystyle\frac{r_i^n}{\exp(\frac{1}{C}\int_\Omega F_i(\phi^n)d\textbf{x})}F_i'(\phi^n),
   \end{array}
   \right.
\end{equation}
and the explicit second order scheme is as the following:
\begin{equation}\label{me-sav-second-e1}
  \left\{
   \begin{array}{rll}
\displaystyle\frac{\phi^{n+1}-\phi^{n}}{\Delta t}&=&\mathcal{G}\mu^{n+1},\\
\mu^{n+1}&=&\displaystyle\mathcal{L}\phi^{n+1}+\sum\limits_{i=1}^kb_i^{\widetilde{r}_i^{n+\frac12},\widetilde{\phi}^{n+\frac12}},\\
\displaystyle\frac{\ln(r_i^{n+1})-\ln(r_i^n)}{\Delta t}&=&\displaystyle\left(b_i^{\widetilde{r}_i^{n+\frac12},\widetilde{\phi}^{n+\frac12}},\frac{\phi^{n+1}-\phi^{n}}{\Delta t}\right),\\
b_i^{r_i^{n+\frac12},\widetilde{\phi}^{n+\frac12}}&=&\displaystyle\frac{\widetilde{r}_i^{n+\frac12}}{\exp(\frac{1}{C}\int_\Omega F_i(\widetilde{\phi}^{n+\frac12})d\textbf{x})}F_i'(\widetilde{\phi}^{n+\frac12}),
   \end{array}
   \right.
\end{equation}
where $\tilde{\phi}^{n+\frac{1}{2}}$ is any explicit $O(\Delta t^2)$ approximation for $\phi(t^{n+\frac{1}{2}})$, and $\tilde{r}_i^{n+\frac{1}{2}}$ is any explicit $O(\Delta t^2)$ approximation for $r_i(t^{n+\frac{1}{2}})$, which can be flexible according to the problem. Here, we choose
\begin{equation}\label{me-sav-second-e2}
\aligned
&\tilde{\phi}^{n+\frac{1}{2}}=\frac32\phi^n-\frac12\phi^{n-1}, \quad n\geq1,\\
&\tilde{r}_i^{n+\frac{1}{2}}=\frac32r_i^n-\frac12r_i^{n-1}, \quad n\geq1,\quad i=1,2,\ldots,k.
\endaligned
\end{equation}
It is not difficult to obtain the unconditional energy stability of above two schemes.
\begin{theorem}\label{me-sav-th1}
The scheme \eqref{me-sav-first-e1} for the equivalent phase field system \eqref{me-sav-e3} is linear, first-order accurate, unconditionally energy stable in the sense that
\begin{equation*}
\aligned
\frac{1}{\Delta t}\left[E_{ME-SAV-1st}^{n+1}-E^{n}_{ME-SAV-1st}\right]\leq(\mathcal{G}\mu^{n+\frac12},\mu^{n+\frac12})-\frac{1}{2\Delta t}(\phi^{n+1}-\phi^n,\mathcal{L}(\phi^{n+1}-\phi^n))\leq0.
\endaligned
\end{equation*}
where the modified discrete version of the energy is defined by
\begin{equation*}
\aligned
E_{ME-SAV-1st}^{n}=\frac12(\phi^n,\mathcal{L}\phi^{n})+C\sum\limits_{i=1}^k\ln(r_i^n),
\endaligned
\end{equation*}
and scheme \eqref{me-sav-second-e1} for the equivalent phase field system \eqref{me-sav-e3} is linear, second-order accurate, unconditionally energy stable in the sense that
\begin{equation*}
\aligned
\frac{1}{\Delta t}\left[E_{ME-SAV-CN}^{n+1}-E^{n}_{ME-SAV-CN}\right]\leq(\mathcal{G}\mu^{n+\frac12},\mu^{n+\frac12})\leq0.
\endaligned
\end{equation*}
where the modified discrete version of the energy is defined by
\begin{equation*}
\aligned
E_{ME-SAV-CN}^{n}=\frac12(\phi^n,\mathcal{L}\phi^{n})+C\sum\limits_{i=1}^k\ln(r_i^n).
\endaligned
\end{equation*}
\end{theorem}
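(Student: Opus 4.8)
The plan is to reproduce, in the multi-variable setting, the energy argument already used for the single-variable first-order scheme \eqref{esav-first-e1}--\eqref{esav-first-e2} and for the Crank--Nicolson scheme of Theorem~\ref{esav-th1}, carrying along the sum over $i=1,\dots,k$ and the scaling constant $C$. For the first-order scheme \eqref{me-sav-first-e1} I would first pair the first equation with $\mu^{n+1}$ to get $\big(\tfrac{\phi^{n+1}-\phi^n}{\Delta t},\mu^{n+1}\big)=(\mathcal{G}\mu^{n+1},\mu^{n+1})$, and then pair the second equation with $\tfrac{\phi^{n+1}-\phi^n}{\Delta t}$. Equating the two expressions for $\big(\tfrac{\phi^{n+1}-\phi^n}{\Delta t},\mu^{n+1}\big)$ isolates the quadratic contribution $\big(\mathcal{L}\phi^{n+1},\tfrac{\phi^{n+1}-\phi^n}{\Delta t}\big)$ together with the nonlinear sum $\sum_{i=1}^k\big(b_i^{r_i^n,\phi^n},\tfrac{\phi^{n+1}-\phi^n}{\Delta t}\big)$.

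The quadratic term is treated using the self-adjointness and non-negativity of $\mathcal{L}$ via the bilinear identity
\[
(\mathcal{L}a,a-b)=\tfrac12(\mathcal{L}a,a)-\tfrac12(\mathcal{L}b,b)+\tfrac12(\mathcal{L}(a-b),a-b),
\]
with $a=\phi^{n+1}$, $b=\phi^n$; this is exactly what generates the stabilizing remainder $-\tfrac{1}{2\Delta t}(\phi^{n+1}-\phi^n,\mathcal{L}(\phi^{n+1}-\phi^n))$ in the first-order estimate. The decisive step is eliminating the nonlinear inner products: summing the third (auxiliary-variable) equation of \eqref{me-sav-first-e1} over $i$ and scaling by $C$ shows that $C\sum_{i=1}^k\tfrac{\ln r_i^{n+1}-\ln r_i^n}{\Delta t}$ coincides with $\sum_{i=1}^k\big(b_i^{r_i^n,\phi^n},\tfrac{\phi^{n+1}-\phi^n}{\Delta t}\big)$. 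Substituting this collapses all nonlinear terms into the telescoping logarithmic part $C\sum_{i=1}^k\ln r_i$ of $E^n_{ME\text{-}SAV\text{-}1st}$, and moving the positive $\mathcal{L}$-remainder to the right-hand side yields the stated inequality.

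For the second-order scheme \eqref{me-sav-second-e1} the argument is structurally identical, the only difference being that the explicit $O(\Delta t^2)$ extrapolations $\widetilde{\phi}^{n+1/2}$ and $\widetilde{r}_i^{n+1/2}$ from \eqref{me-sav-second-e2} enter solely through the already-known coefficients $b_i^{\widetilde{r}_i^{n+1/2},\widetilde{\phi}^{n+1/2}}$, so they do not obstruct the telescoping of the logarithm. Reading the momentum equation at the midpoint $\mu^{n+\frac12}$ with the Crank--Nicolson linear part (as in \eqref{esav-second-e1}), the bilinear identity applied to $\big(\mathcal{L}\tfrac{\phi^{n+1}+\phi^n}{2},\phi^{n+1}-\phi^n\big)$ leaves no positive remainder, so the telescoping produces the clean equality $\tfrac{1}{\Delta t}\big[E^{n+1}_{ME\text{-}SAV\text{-}CN}-E^n_{ME\text{-}SAV\text{-}CN}\big]=\sum_{i=1}^k(\mathcal{G}\mu_i^{n+\frac12},\mu_i^{n+\frac12})\le 0$ claimed in the theorem.

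I expect no genuine analytical obstacle here: the whole proof is a bookkeeping exercise resting on the $\mathcal{L}$-bilinear identity and the telescoping of $\ln r_i$. The only points demanding care are keeping the constant $C$ and the summation over $i$ consistently aligned between the auxiliary-variable updates and the definition of the discrete energy, and checking that the explicitly evaluated $b_i$ terms cancel identically between the chemical-potential equation and the $r_i$-update, so that the sign of the remaining $\mathcal{G}$ and $\mathcal{L}$ contributions is the one needed for dissipation.
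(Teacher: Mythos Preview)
Your proposal is correct and follows exactly the route the paper has in mind: the paper does not give a separate proof of Theorem~\ref{me-sav-th1} but simply states that ``it is not difficult to obtain the unconditional energy stability of above two schemes,'' relying on the same inner-product and bilinear-identity argument already carried out for \eqref{esav-first-e1}--\eqref{esav-first-e2} and Theorem~\ref{esav-th1}. One small slip to clean up: in your last displayed equality you write $\sum_{i=1}^k(\mathcal{G}\mu_i^{n+1/2},\mu_i^{n+1/2})$, but in this section there is a single phase variable $\phi$ and a single chemical potential $\mu$ (the index $i$ runs over the auxiliary variables $r_i$ only), so the right-hand side should simply be $(\mathcal{G}\mu^{n+1/2},\mu^{n+1/2})$ as in the theorem statement; you may be conflating this with the multi-$\phi_i$ setting of Section~3. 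Also be aware that, as printed, the third line of \eqref{me-sav-first-e1} omits the factor $1/C$ present in the continuous equation \eqref{me-sav-e3}; your telescoping step implicitly (and correctly) uses that factor, so when writing up the proof you should either restore it or note the discrepancy.
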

\subsection{ME-SAV approach for the Cahn-Hilliard phase field model of the binary fluid-surfactant system}
In this section, we consider the proposed ME-SAV approach for the commonly used binary fluid-surfactant phase field model with two coupled Cahn-Hilliard equations. In particular, the free energy of the system is given as follows:
\begin{equation}\label{bfss-e1}
E(\phi,\rho)=\int_\Omega\displaystyle\left(\frac12|\nabla\phi|^2+\frac{\alpha}{2}(\Delta \phi)^2+\frac{1}{4\epsilon^2}F(\phi)+\frac{\beta}{2}|\nabla\rho|^2+\frac{1}{4\eta^2}G(\rho)-\theta\rho|\nabla\phi|^2\right)d\textbf{x},
\end{equation}
where the double well Ginzburg-Landau potential $F(\phi)=(\phi^2-1)^2$ and $G(\rho)=\rho^2(\rho-\rho_s)^2$, where $\alpha$, $\beta$, $\epsilon$, $\rho_s$ and $\theta$ are all positive parameters.

Considering a gradient flow in $H^{-1}$ which is derived from the functional variation of free energy \eqref{bfss-e1} and introducing two chemical potentials $\nu_\phi$ and $\mu_\rho$, one can obtain the following Cahn-Hilliard phase field model of the binary fluid-surfactant system:
\begin{equation}\label{bfss-e2}
  \left\{
   \begin{array}{rll}
\displaystyle\frac{\partial \phi}{\partial t}&=&M_\phi\Delta\mu_\phi,\\
\mu_\phi&=&\displaystyle-\Delta\phi+\alpha\Delta^2\phi+\frac{1}{\epsilon^2}F'(\phi)+2\theta\nabla\cdot(\rho\nabla\phi),\\
\displaystyle\frac{\partial \rho}{\partial t}&=&M_\rho\Delta\mu_\rho,\\
\mu_\rho&=&\displaystyle-\beta\Delta\rho+\frac{1}{\eta^2}G'(\rho)+\theta|\nabla\phi|^2.
   \end{array}
   \right.
\end{equation}

The system satisfies an energy dissipation law:
\begin{equation*}
\frac{d}{dt}E(\phi,\rho)=-M_\phi\|\nabla\mu_\phi\|^2-M_\rho\|\nabla\mu_\rho\|^2\leq0.
\end{equation*}

One can notice that the above coupled system has two nonlinear terms $F'(\phi)$ and $G'(\rho)$ which makes it very hard to handle with only one SAV. Thus, we introduce two exponential scalar auxiliary variables as follows:
\begin{equation}\label{bfss-e3}
\aligned
&r(t)={\exp(E_F)}={\exp\left(\int_\Omega F(\phi)d\textbf{x}\right)},\\
&q(t)={\exp(E_G)}={\exp\left(\int_\Omega G(\rho)d\textbf{x}\right)}.
\endaligned
\end{equation}

Combining the equations \eqref{bfss-e3} with the coupled system \eqref{bfss-e2}, and define
\begin{equation*}
\aligned
&b^{r,\phi}=\frac{r(t)}{\exp(E_F)}F^{'}(\phi),\\
&d^{q,\rho}=\frac{q(t)}{\exp(E_G)}G^{'}(\rho).
\endaligned
\end{equation*}
we can obtain the following equivalent PDE system as follows:
\begin{equation}\label{bfss-e4}
  \left\{
   \begin{array}{rll}
\displaystyle\frac{\partial \phi}{\partial t}&=&M_\phi\Delta\mu_\phi,\\
\mu_\phi&=&\displaystyle-\Delta\phi+\alpha\Delta^2\phi+\frac{1}{\epsilon^2}b^{r,\phi}+2\theta\nabla\cdot(\rho\nabla\phi),\\
\displaystyle\frac{\partial \rho}{\partial t}&=&M_\rho\Delta\mu_\rho,\\
\mu_\rho&=&\displaystyle-\beta\Delta\rho+\frac{1}{\eta^2}d^{q,\rho}+\theta|\nabla\phi|^2,\\
\displaystyle\frac{d\ln r}{dt}&=&\displaystyle(b^{r,\phi},\frac{\partial \phi}{\partial t}),\\
\displaystyle\frac{d\ln q}{dt}&=&\displaystyle(d^{q,\rho},\frac{\partial \rho}{\partial t}).
   \end{array}
   \right.
\end{equation}
The free energy \eqref{bfss-e1} can be rewritten as
\begin{equation}\label{bfss-e5}
E(\phi,\rho,r,q)=\int_\Omega\displaystyle\left(\frac12|\nabla\phi|^2+\frac{\alpha}{2}(\Delta \phi)^2+\frac{\beta}{2}|\nabla\rho|^2-\theta\rho|\nabla\phi|^2\right)d\textbf{x}+\frac{1}{4\epsilon^2}\ln r+\frac{1}{4\eta^2}\ln q.
\end{equation}

The phase field model is usually supplemented with the periodic boundary condition. So, for system \eqref{bfss-e4}, we assume that the density field $\phi$ and $\rho$ are periodic on $\Omega$. The initial conditions read as
\begin{equation}\label{bfss-e6}
\phi|_{t=0}=\phi_0,\quad \rho_{t=0}=\rho_0,\quad r|_{t=0}=\exp(E_F(\phi_0)),\quad q|_{t=0}=\exp(E_G(\rho_0)).
\end{equation}
Taking the $L^2$ inner product of the first four equations in \eqref{bfss-e4} with $\mu_\phi$, $\phi_t$, $\mu_\rho$, and $\rho_t$, respectively, and combining them with the last two equations in \eqref{bfss-e4}, we can obtain the energy dissipation law immediately:
 \begin{equation*}
\frac{d}{dt}E(\phi,\rho,r,q)=-M_\phi\|\nabla\mu_\phi\|^2-M_\rho\|\nabla\mu_\rho\|^2\leq0.
\end{equation*}

Next, we will give a first-order ME-SAV scheme and prove the unconditional energy stability. The second-order scheme based on ME-SAV approach can be obtained similarly as before. In detail, the first-order scheme can be written as follows:
\begin{flalign}\label{bfss-e7}
\begin{split}
\bm{Step~~I:}\left\{
   \begin{array}{rll}
\displaystyle\frac{\rho^{n+1}-\rho^n}{\Delta t}&=&M_\rho\Delta\mu^{n+1}_\rho,\\
\mu^{n+1}_\rho&=&\displaystyle-\beta\Delta\rho^{n+1}+\frac{1}{\eta^2}d^{q^n,\rho^n}+\theta|\nabla\phi^n|^2,\\
\displaystyle\frac{\ln q^{n+1}-\ln q^n}{\Delta t}&=&\displaystyle(d^{q^n,\rho^n},\frac{\rho^{n+1}-\rho^n}{\Delta t}).
   \end{array}
   \right.
\end{split}&
\end{flalign}

\begin{flalign}\label{bfss-e8}
\begin{split}
\bm{Step~~II:}\left\{
   \begin{array}{rll}
\displaystyle\frac{\phi^{n+1}-\phi^n}{\Delta t}&=&M_\phi\Delta\mu^{n+1}_\phi,\\
\mu^{n+1}_\phi&=&\displaystyle-\Delta\phi^{n+1}+\alpha\Delta^2\phi^{n+1}+\frac{1}{\epsilon^2}b^{r^n,\phi^n}+2\theta\nabla\cdot(\rho^{n+1}\nabla\frac{\phi^n+1+\phi^n}{2}),\\
\displaystyle\frac{\ln r^{n+1}-\ln r^n}{\Delta t}&=&\displaystyle(b^{r^n,\phi^n},\frac{\phi^{n+1}-\phi^n}{\Delta t}).
   \end{array}
   \right.
\end{split}&
\end{flalign}
\begin{remark}\label{bfss-re1}
The computations of $\phi$, $\rho$, $r$ and $q$ are totally decoupled by above two steps. Firstly, we only need $\phi^n$ to compute $\rho^{n+1}$ in step $i$. Then, $q^{n+1}$ can be obtained by computing $(d^{q^n,\rho^n},\rho^{n+1}-\rho^n)$. Next, when computing $\phi^{n+1}$ in step $ii$, $\rho^{n+1}$ has already been obtained from step $i$. Last, $r^{n+1}$ can be obtained by computing $(b^{r^n,\phi^n},\phi^{n+1}-\phi^n)$.
\end{remark}

\begin{theorem}\label{bfss-th1}
The scheme \eqref{bfss-e7}-\eqref{bfss-e8} for the equivalent system \eqref{bfss-e4} is unconditionally energy stable in the sense that
\begin{equation*}
\aligned
\frac{1}{\Delta t}\left[E_{ME-SAV}^{n+1}-E^{n}_{ME-SAV}\right]\leq-M_\phi\|\nabla\mu^{n+1}_\phi\|^2-M_\rho\|\nabla\mu^{n+1}_\rho\|^2\leq0.
\endaligned
\end{equation*}
where the modified discrete version of the energy is defined by
\begin{equation*}
\aligned
E_{ME-SAV}^{n}=\frac{\beta}{2\Delta t}\|\nabla\rho^{n}\|^2+\frac{1}{\eta^2}{\ln q^{n}}+\frac{1}{2\Delta t}\|\nabla\phi^{n}\|^2+\frac{\alpha}{2\Delta t}\|\Delta\phi^{n}\|^2+\frac{1}{\epsilon^2\Delta t}{\ln r^{n}}-\frac{\theta}{\Delta t}(|\nabla\phi^{n}|^2,\rho^{n}).
\endaligned
\end{equation*}
\end{theorem}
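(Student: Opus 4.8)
The plan is to reproduce, at the fully discrete level, the derivation of the continuous energy law for \eqref{bfss-e4}, but respecting the two-stage decoupled structure of the scheme. Since Step I in \eqref{bfss-e7} and Step II in \eqref{bfss-e8} are solved sequentially, I would derive an energy identity for each stage and then add them. For Step I, I take the $L^2$ inner product of the $\rho$-evolution equation with $\mu_\rho^{n+1}$, giving $(\frac{\rho^{n+1}-\rho^n}{\Delta t},\mu_\rho^{n+1})=M_\rho(\Delta\mu_\rho^{n+1},\mu_\rho^{n+1})=-M_\rho\|\nabla\mu_\rho^{n+1}\|^2$, and the inner product of the $\mu_\rho^{n+1}$-equation with $(\rho^{n+1}-\rho^n)/\Delta t$. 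Equating the two expressions isolates $-M_\rho\|\nabla\mu_\rho^{n+1}\|^2$. I would do the analogous pairing in Step II with $\mu_\phi^{n+1}$ and $(\phi^{n+1}-\phi^n)/\Delta t$, producing $-M_\phi\|\nabla\mu_\phi^{n+1}\|^2$.

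Next I would dispatch the linear and biharmonic terms. Under periodicity the boundary terms vanish, so $(-\beta\Delta\rho^{n+1},\rho^{n+1}-\rho^n)=\beta(\nabla\rho^{n+1},\nabla(\rho^{n+1}-\rho^n))$, and likewise $(-\Delta\phi^{n+1},\cdot)$ and $(\alpha\Delta^2\phi^{n+1},\cdot)$ become $(\nabla\phi^{n+1},\nabla(\cdot))$ and $\alpha(\Delta\phi^{n+1},\Delta(\cdot))$. Applying the identity $(a-b,a)=\tfrac12|a|^2-\tfrac12|b|^2+\tfrac12|a-b|^2$ turns each of these into a telescoping difference such as $\tfrac{\beta}{2}(\|\nabla\rho^{n+1}\|^2-\|\nabla\rho^n\|^2)$, $\tfrac12(\|\nabla\phi^{n+1}\|^2-\|\nabla\phi^n\|^2)$, $\tfrac{\alpha}{2}(\|\Delta\phi^{n+1}\|^2-\|\Delta\phi^n\|^2)$, plus the nonnegative increments $\tfrac{\beta}{2}\|\nabla(\rho^{n+1}-\rho^n)\|^2$, $\tfrac12\|\nabla(\phi^{n+1}-\phi^n)\|^2$, $\tfrac{\alpha}{2}\|\Delta(\phi^{n+1}-\phi^n)\|^2$. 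The two nonlinear potentials are handled entirely by the E-SAV device: by the third lines of \eqref{bfss-e7} and \eqref{bfss-e8}, the terms $\tfrac{1}{\eta^2}(d^{q^n,\rho^n},\tfrac{\rho^{n+1}-\rho^n}{\Delta t})$ and $\tfrac{1}{\epsilon^2}(b^{r^n,\phi^n},\tfrac{\phi^{n+1}-\phi^n}{\Delta t})$ are exactly $\tfrac{1}{\eta^2}\tfrac{\ln q^{n+1}-\ln q^n}{\Delta t}$ and $\tfrac{1}{\epsilon^2}\tfrac{\ln r^{n+1}-\ln r^n}{\Delta t}$, which telescope the $\ln q$ and $\ln r$ contributions to $E_{ME-SAV}$ with no lower-bound hypothesis on $F$ or $G$.

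The main obstacle is the surfactant coupling $-\theta\rho|\nabla\phi|^2$, which is distributed across the two stages and must recombine into a single discrete difference. In Step I it appears as $\theta(|\nabla\phi^n|^2,\tfrac{\rho^{n+1}-\rho^n}{\Delta t})$, with $|\nabla\phi^n|^2$ taken explicitly. In Step II it enters through $2\theta\nabla\cdot(\rho^{n+1}\nabla\tfrac{\phi^{n+1}+\phi^n}{2})$, with the coefficient $\rho^{n+1}$ frozen at its Step I value and $\phi$ averaged in Crank--Nicolson form; integrating by parts and using $(\nabla\phi^{n+1}+\nabla\phi^n)\cdot(\nabla\phi^{n+1}-\nabla\phi^n)=|\nabla\phi^{n+1}|^2-|\nabla\phi^n|^2$ collapses it to $-\tfrac{\theta}{\Delta t}(\rho^{n+1},|\nabla\phi^{n+1}|^2-|\nabla\phi^n|^2)$. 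The delicate point, which I expect to be the crux of the whole argument, is to verify that these two cross contributions recombine consistently with the coupling term $-\tfrac{\theta}{\Delta t}(|\nabla\phi^n|^2,\rho^n)$ carried in $E_{ME-SAV}^n$ so as to telescope into $\tfrac{1}{\Delta t}[E_{ME-SAV}^{n+1}-E_{ME-SAV}^n]$. The explicit-then-frozen ordering $\rho^n\to\rho^{n+1}$ in Step I and $\phi^n\to\phi^{n+1}$ in Step II is chosen precisely so that the cross terms match; I would track this bookkeeping with care, because any mismatch leaves a residual of the type $(|\nabla\phi^n|^2,\rho^{n+1}-\rho^n)$ which carries no definite sign and would break stability.

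Finally I would add the Step I and Step II identities, group all telescoping pieces into $E_{ME-SAV}^{n+1}-E_{ME-SAV}^n$, and discard the nonnegative squared-increment terms $\tfrac{\beta}{2}\|\nabla(\rho^{n+1}-\rho^n)\|^2$, $\tfrac12\|\nabla(\phi^{n+1}-\phi^n)\|^2$ and $\tfrac{\alpha}{2}\|\Delta(\phi^{n+1}-\phi^n)\|^2$, which only strengthen the inequality. What remains is $\tfrac{1}{\Delta t}[E_{ME-SAV}^{n+1}-E_{ME-SAV}^n]\le -M_\phi\|\nabla\mu_\phi^{n+1}\|^2-M_\rho\|\nabla\mu_\rho^{n+1}\|^2$, and the right-hand side is manifestly nonpositive since $M_\phi,M_\rho>0$, which is exactly the asserted dissipation law.
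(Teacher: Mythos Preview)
Your proposal is correct and follows essentially the same route as the paper: take inner products of the two equations in each step with $\mu^{n+1}$ and $(\cdot^{n+1}-\cdot^n)/\Delta t$, apply the polarization identity $(a,a-b)=\tfrac12|a|^2-\tfrac12|b|^2+\tfrac12|a-b|^2$ to the linear/biharmonic pieces, replace the nonlinear inner products by $\ln q^{n+1}-\ln q^n$ and $\ln r^{n+1}-\ln r^n$ via the E-SAV update equations, and drop the nonnegative squared increments. The one place where the paper is slightly more explicit than your sketch is the coupling term: it records the algebraic identity
\[
(|\nabla\phi^n|^2,\rho^{n+1}-\rho^n)+(|\nabla\phi^{n+1}|^2-|\nabla\phi^n|^2,\rho^{n+1})=(|\nabla\phi^{n+1}|^2,\rho^{n+1})-(|\nabla\phi^n|^2,\rho^n),
\]
which is exactly the telescoping you anticipate as the ``crux'' and which confirms that the staggered explicit/frozen treatment leaves no sign-indefinite residual.
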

\begin{proof}
By taking the $L^2$ inner product with $\mu^{n+1}_\rho$ of the first equation in \eqref{bfss-e7}, we obtain
\begin{equation}\label{bfss-e9}
\aligned
\frac{1}{\Delta t}(\rho^{n+1}-\rho^n,\mu_\rho^{n+1})=-M_\rho\|\nabla\mu^{n+1}_\rho\|^2.
\endaligned
\end{equation}
By taking the $L^2$ inner product of the second equation in \eqref{bfss-e7} with $\frac{1}{\Delta t}(\rho^{n+1}-\rho^n)$, and noticing that
\begin{equation*}
(x,x-y)=\frac{1}{2}|x|^2+\frac12|y|^2+\frac12|x-y|^2,
\end{equation*}
then, combining them with the third equation in \eqref{bfss-e7}, we obtain
\begin{equation}\label{bfss-e10}
\aligned
\frac{1}{\Delta t}(\rho^{n+1}-\rho^n,\mu_\rho^{n+1})=
&\frac{\beta}{2\Delta t}(\|\nabla\rho^{n+1}\|^2-\|\nabla\rho^{n}\|^2+\|\nabla\rho^{n+1}-\nabla\rho^n\|^2)+\frac{1}{\eta^2\Delta t}(\ln q^{n+1}-\ln q^n)\\
&+\frac{\theta}{\Delta t}(|\nabla\phi^n|^2,\rho^{n+1}-\rho^n).
\endaligned
\end{equation}
By taking the $L^2$ inner product with $\mu^{n+1}_\phi$ of the first equation in \eqref{bfss-e8}, we obtain
\begin{equation}\label{bfss-e11}
\aligned
\frac{1}{\Delta t}(\phi^{n+1}-\phi^n,\mu_\phi^{n+1})=-M_\phi\|\nabla\mu^{n+1}_\phi\|^2.
\endaligned
\end{equation}
By taking the $L^2$ inner product of the second equation in \eqref{bfss-e8} with $\frac{1}{\Delta t}(\phi^{n+1}-\phi^n)$, and combining them with the third equation in \eqref{bfss-e8}, we obtain
\begin{equation}\label{bfss-e12}
\aligned
\frac{1}{\Delta t}(\phi^{n+1}-\phi^n,\mu_\phi^{n+1})=
&\frac{1}{2\Delta t}(\|\nabla\phi^{n+1}\|^2-\|\nabla\phi^{n}\|^2+\|\nabla\phi^{n+1}-\nabla\phi^n\|^2)\\
&\frac{\alpha}{2\Delta t}(\|\Delta\phi^{n+1}\|^2-\|\Delta\phi^{n}\|^2+\|\Delta\phi^{n+1}-\Delta\phi^n\|^2)\\
&+\frac{1}{\epsilon^2\Delta t}(\ln r^{n+1}-\ln r^n)+\frac{\theta}{\Delta t}(|\nabla\phi^{n+1}|^2-|\nabla\phi^n|^2,\rho^{n+1}).
\endaligned
\end{equation}
Combining the equations \eqref{bfss-e9}-\eqref{bfss-e12} and using the following inequality \cite{yang2018numerical}:
\begin{equation}\label{bfss-e13}
\aligned
\frac{\theta}{\Delta t}(|\nabla\phi^n|^2,\rho^{n+1}-\rho^n)+\frac{\theta}{\Delta t}(|\nabla\phi^{n+1}|^2-|\nabla\phi^n|^2,\rho^{n+1})=\frac{\theta}{\Delta t}(|\nabla\phi^{n+1}|^2,\rho^{n+1})-\frac{\theta}{\Delta t}(|\nabla\phi^n|^2,\rho^{n}),
\endaligned
\end{equation}
we have
\begin{equation}\label{bfss-e14}
\aligned
&\frac{1}{\Delta t}\left[E_{ME-SAV}^{n+1}-E^{n}_{ME-SAV}\right]\\
&=\frac{\beta}{2\Delta t}\|\nabla\rho^{n+1}\|^2+\frac{1}{\eta^2}{\ln q^{n+1}}+\frac{1}{2\Delta t}\|\nabla\phi^{n+1}\|^2+\frac{\alpha}{2\Delta t}\|\Delta\phi^{n+1}\|^2+\frac{1}{\epsilon^2\Delta t}{\ln r^{n+1}}-\frac{\theta}{\Delta t}(|\nabla\phi^{n+1}|^2,\rho^{n+1})\\
&-\frac{\beta}{2\Delta t}\|\nabla\rho^{n}\|^2-\frac{1}{\eta^2}{\ln q^{n}}-\frac{1}{2\Delta t}\|\nabla\phi^{n}\|^2-\frac{\alpha}{2\Delta t}\|\Delta\phi^{n}\|^2-\frac{1}{\epsilon^2\Delta t}{\ln r^{n}}+\frac{\theta}{\Delta t}(|\nabla\phi^{n}|^2,\rho^{n})\\
&=-M_\phi\|\nabla\mu^{n+1}_\phi\|^2-M_\rho\|\nabla\mu^{n+1}_\rho\|^2-\frac{\beta}{2\Delta t}\|\nabla\rho^{n+1}-\nabla\rho^n\|^2-\frac{1}{2\Delta t}\|\nabla\phi^{n+1}-\nabla\phi^n\|^2-\frac{\alpha}{2\Delta t}\|\Delta\phi^{n+1}-\Delta\phi^n\|^2.\\
\endaligned
\end{equation}
\end{proof}
\section{Examples and discussion}
In this section, we use several numerical examples to demonstrate the accuracy, energy stability and efficiency of the proposed schemes when applying to the some classical phase field models such as Allen-Cahn equation, Cahn-Hilliard equation, phase field crystal model and so on. In all examples, we assume periodic boundary conditions and use a fourier spectral method for space variables. To test the efficiency of fast calculation, all the solvers are implemented using Matlab and all the numerical experiments are performed on a computer with 8-GB memory.

\subsection{Allen-Cahn and Cahn-Hilliard equations}
Both Allen-Cahn and Cahn-Hilliard equations are very classical phase field models and have been widely used in many fields involving physics, materials science, finance and image processing \cite{chen2018accurate,chen2018power,du2018stabilized}.

In particular, Allen-Cahn equation has been widely used to model various phenomena in nature which was introduced by M. Allen and W. Cahn in \cite{allen1979microscopic}:
\begin{equation}\label{example-e1}
  \left\{
   \begin{array}{rlr}
\displaystyle\frac{\partial \phi}{\partial t}&=-M\mu,     &(\textbf{x},t)\in\Omega\times J,\\
                                          \mu&=\displaystyle-\Delta \phi+\frac{1}{\epsilon^2}f(\phi),&(\textbf{x},t)\in\Omega\times J,
   \end{array}
   \right.
  \end{equation}
and Cahn-Hilliard equation is as follows which was introduced by John W. Cahn and John E. Hilliard in \cite{cahn1958free} to describe the process of phase separation:
\begin{equation}\label{example-e2}
  \left\{
   \begin{array}{rlr}
\displaystyle\frac{\partial \phi}{\partial t}&=M\Delta\mu,     &(\textbf{x},t)\in\Omega\times J,\\
                                          \mu&=\displaystyle-\epsilon\Delta \phi+\frac{1}{\epsilon}f(\phi),&(\textbf{x},t)\in\Omega\times J,
   \end{array}
   \right.
  \end{equation}
where $J=(0,T]$, $M$ is the mobility constant, $\mu$ is the chemical potential, and $f(\phi)=F'(\phi)$, $F(\phi)$ is a non-convex potential density function. In this paper, we consider the following double well potential function $F(\phi)=\frac14(\phi^2-1)^2$.

\textbf{Example 1}: Consider the above Allen-Cahn and Cahn-Hilliard equations in $\Omega=[0,2\pi]$ with $\epsilon=0.1$, $T=0.032$, $M=1$ in Allen-Cahn equation and $M=0.1$ in Cahn-Hilliard equation, and the following initial condition \cite{ShenA}:
\begin{equation*}
\aligned
\phi(x,y,0)=0.05sin(x)sin(y).
\endaligned
\end{equation*}

We use the Fourier spectral Galerkin method for spatial discretization with $N=128$. The true solution is unknown and we therefore use the
Fourier Galerkin approximation in the case $\Delta t=1e-8$ as a reference solution.

For Allen-Cahn equation, we consider first order time discrete schemes based on both SAV approach in \cite{ShenA} and the proposed E-SAV approach in this article. The computational results are shown in Table \ref{tab:tab1}. The numerical results indicate that both SAV and E-SAV scheme are indeed of first order in time but the later scheme keeps the error smaller. Specially, the E-SAV scheme is much easier to calculate than SAV scheme. The CPU time shows that the E-SAV scheme is about half as time-consuming as SAV scheme.

For Cahn Hilliard model, a comparative study of classical SAV and E-SAV approaches based on second order time discrete schemes is considered in Table \ref{tab:tab2}. The two methods obtain almost identical error and convergence rates. However, the E-SAV scheme also saves half the time compared with SAV scheme.
\begin{table}[h!b!p!]
\small
\centering
\caption{\small The $L_2$ errors, convergence rates for first order scheme in time for SAV and E-SAV approaches of Allen Cahn equation.}\label{tab:tab1}
\begin{tabular}{cccccccccc}
\hline
          &&SAV&&&&E-SAV&\\
\cline{1-8}
$\Delta t$          &$L_2$ error&Rate&Cpu-Time(s)&&$L_2$ error&Rate&Cpu-Time(s)\\
\cline{1-8}
$1.6e-4$ &1.5839e-2   &---   &1.17    &&8.8644e-3   &---   &0.65\\
$8e-5$   &7.9574e-3   &0.9946&2.26    &&4.4254e-3   &1.0022&1.29\\
$4e-5$   &3.9680e-3   &1.0052&4.51    &&2.1899e-3   &1.0149&2.60\\
$2e-5$   &1.9610e-3   &1.0187&9.16    &&1.0681e-3   &1.0358&4.78\\
$1e-5$   &9.5446e-4   &1.0422&17.83   &&5.0627e-4   &1.0771&9.95\\
\cline{1-8}
\end{tabular}
\end{table}

\begin{table}[h!b!p!]
\small
\centering
\caption{\small The $L_2$ errors, convergence rates for second order scheme in time for SAV and E-SAV approaches of Cahn Hilliard equation.}\label{tab:tab2}
\begin{tabular}{cccccccccc}
\hline
          &&SAV&&&&E-SAV&\\
\cline{1-8}
$\Delta t$          &$L_2$ error&Rate&Cpu-Time(s)&&$L_2$ error&Rate&Cpu-Time(s)\\
\cline{1-8}
$1.6e-4$ &5.1526e-8   &---   &1.43    &&5.1474e-8   &---   &0.81\\
$8e-5$   &1.2873e-8   &2.0009&2.94    &&1.2848e-8   &2.0023&1.43\\
$4e-5$   &3.2156e-9   &2.0011&5.07    &&3.2081e-9   &2.0017&2.63\\
$2e-5$   &8.0229e-10  &2.0028&10.32   &&8.0147e-10  &2.0009&5.63\\
$1e-5$   &1.9906e-10  &2.0109&20.12   &&1.9971e-10  &2.0047&10.73\\
\cline{1-8}
\end{tabular}
\end{table}
\textbf{Example 2}: In the following, we solve a benchmark problem for the Allen-Cahn equation which can be seen in many articles such as \cite{ShenA}. We take $\epsilon=0.01$, $M=1$. The initial condition is chosen as
\begin{equation*}
\aligned
\phi_0(x,y,0)=\sum\limits_{i=1}^2-\tanh\left(\frac{\sqrt{(x-x_i)^2+(y-y_i)^2}-R_0}{\sqrt{2}\epsilon}\right)+1.
\endaligned
\end{equation*}
with the radius $R_0=0.19$, $(x_1,y_1)=(0.3,0.5)$ and $(x_2,y_2)=(0.7,0.5)$. Initially, two bubbles, centered at $(0.3,0.5)$ and $(0.7,0.5)$, respectively, are osculating or "kissing".

As is known to all, the Allen-Cahn equation does not conserve mass. So, in Figure \ref{fig:fig1}, we can see that as time evolves, the two bubbles coalesce into a single bubble, then, shrinks and finally disappears. A correct simulation of this phenomenon shows the effectiveness of our E-SAV approach. In Figure \ref{fig:fig2}, we plot the time evolution of the energy functional with different time step size of $\Delta t=0.001$, $0.01$, $0.1$, $1$ and $2$ using the first order scheme based on E-SAV approach. All energy curves show the monotonic decays for all time steps that confirms that the algorithm E-SAV is unconditionally energy stable. Time evolution of the total free energy based on SAV and E-SAV approaches is computed by using the time step $\Delta t=0.001$ in the left figure in Figure \ref{fig:fig2}. These two energy curves are almost identical and they both decay monotonically at all times.
\begin{figure}[htp]
\centering
\subfigure[t=0]{
\includegraphics[width=4cm,height=4cm]{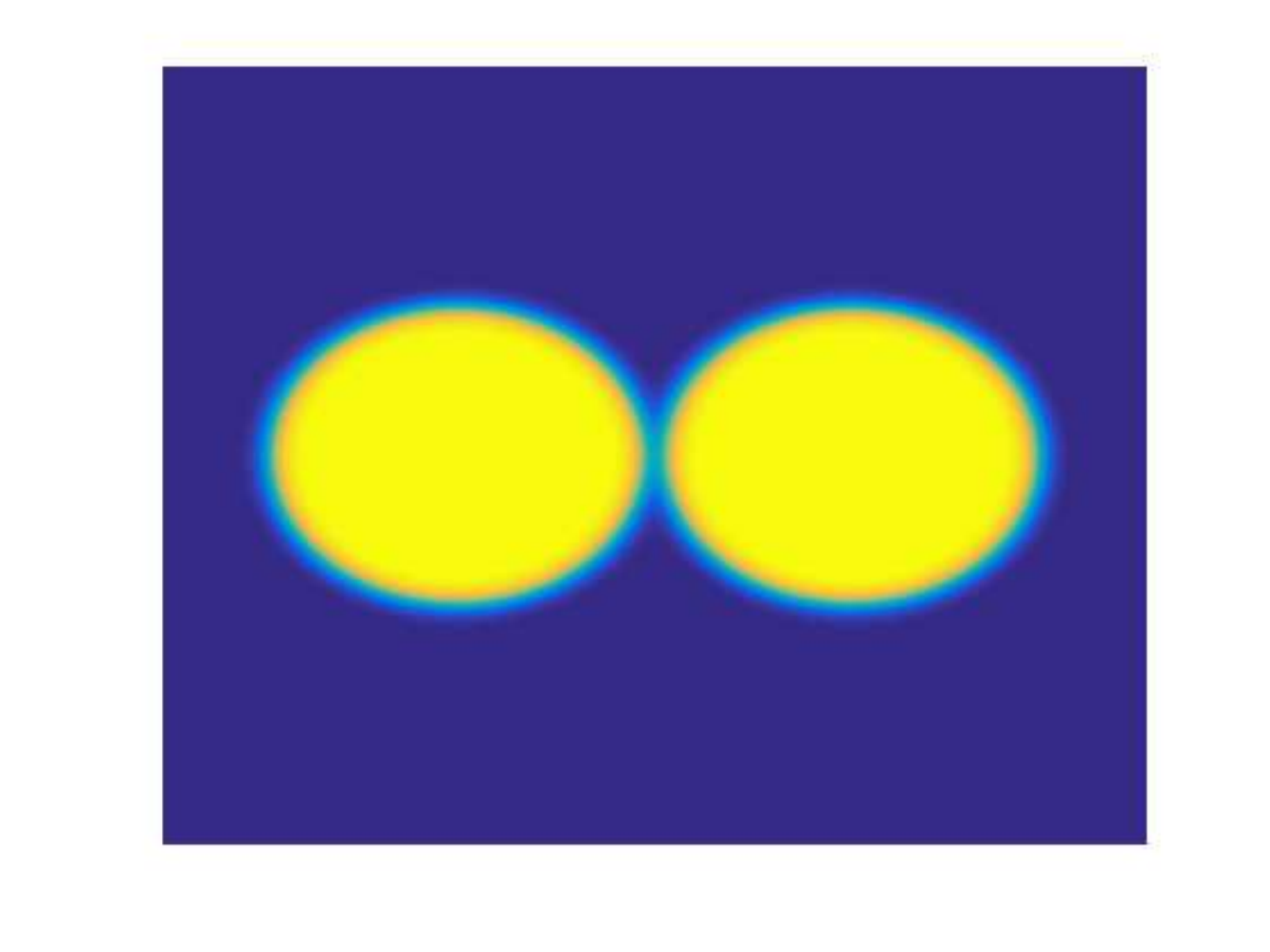}
}
\subfigure[t=0.02]
{
\includegraphics[width=4cm,height=4cm]{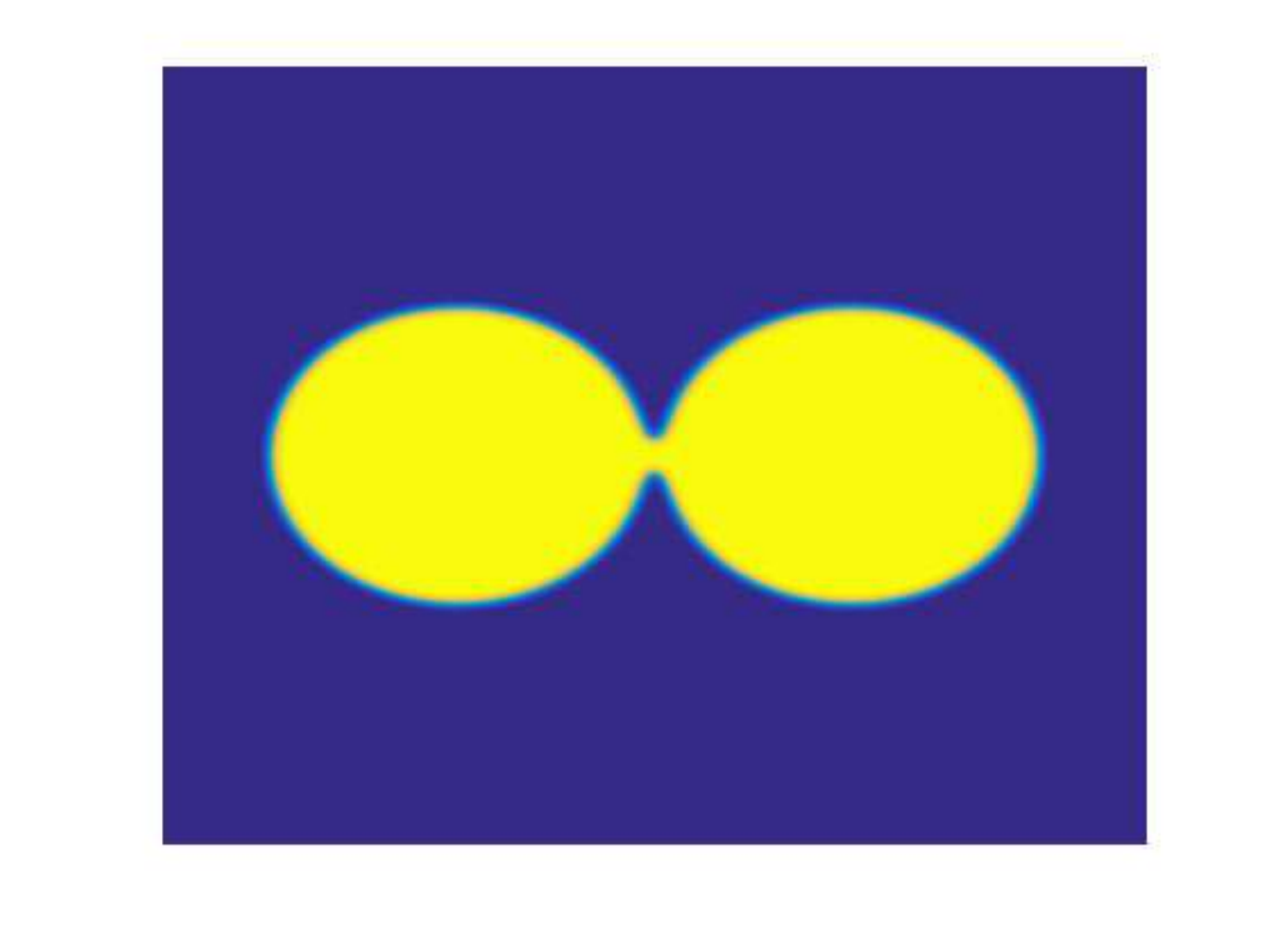}
}
\subfigure[t=0.5]
{
\includegraphics[width=4cm,height=4cm]{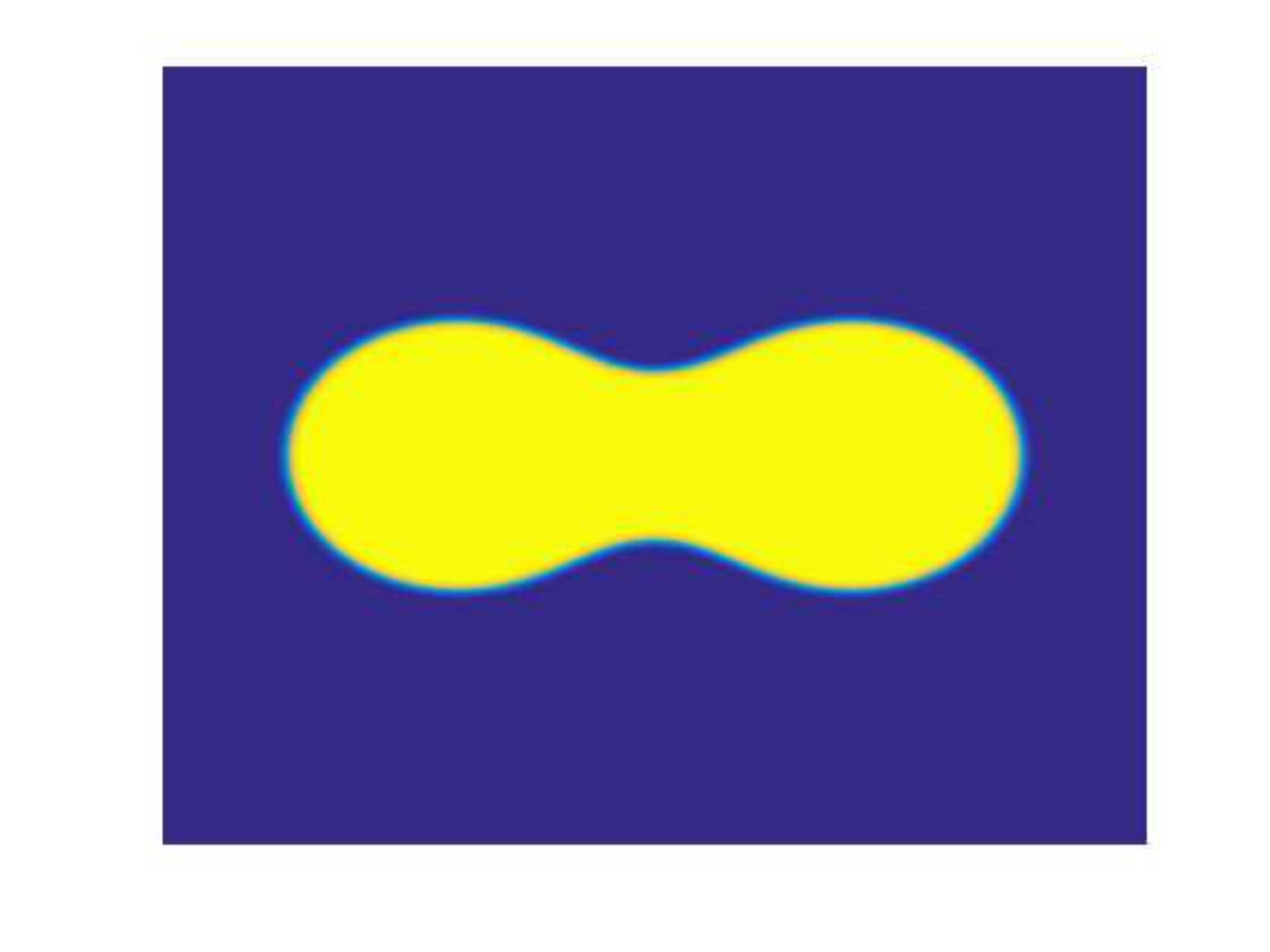}
}
\quad
\subfigure[t=1.5]{
\includegraphics[width=4cm,height=4cm]{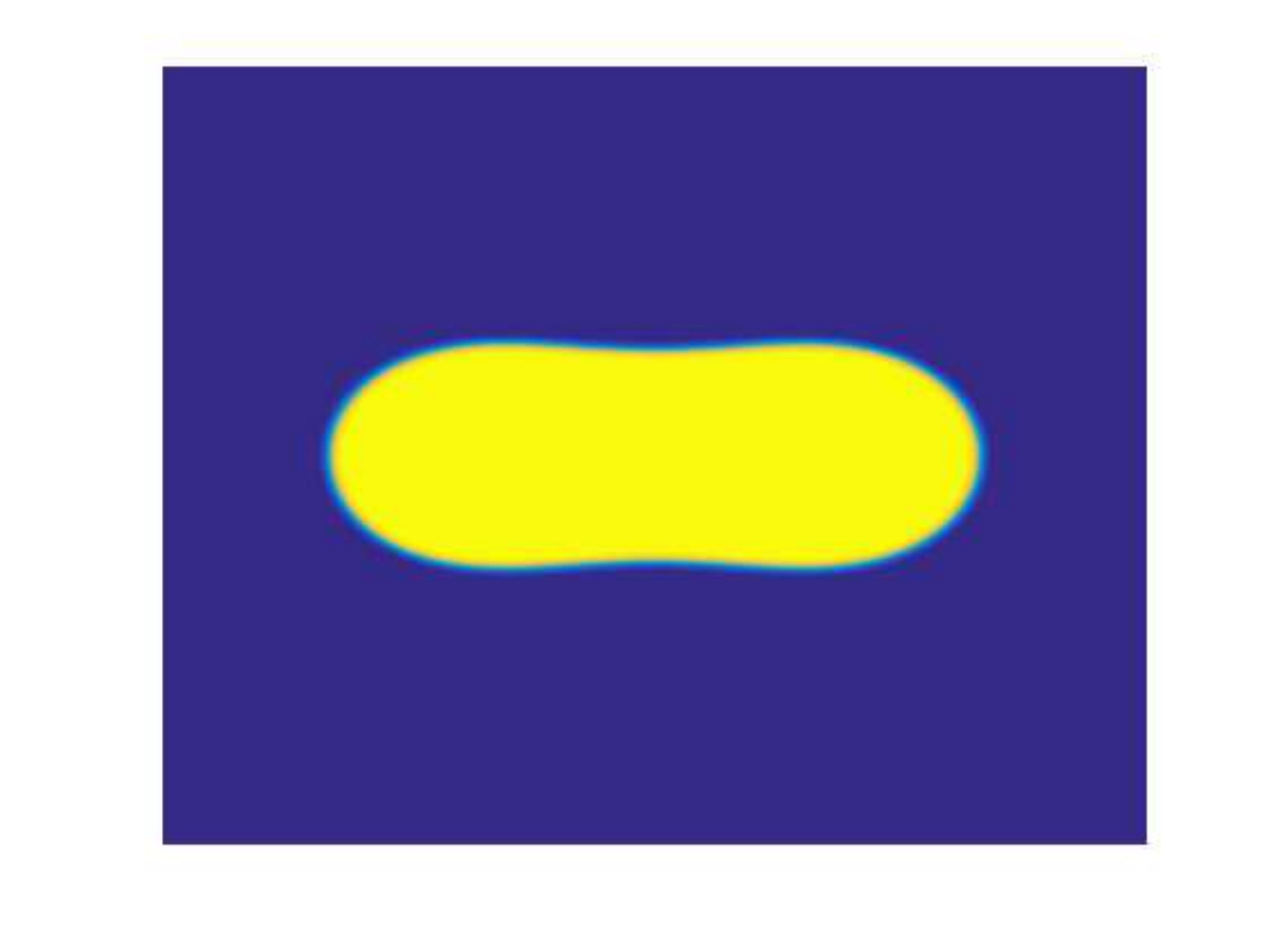}
}
\subfigure[t=4]
{
\includegraphics[width=4cm,height=4cm]{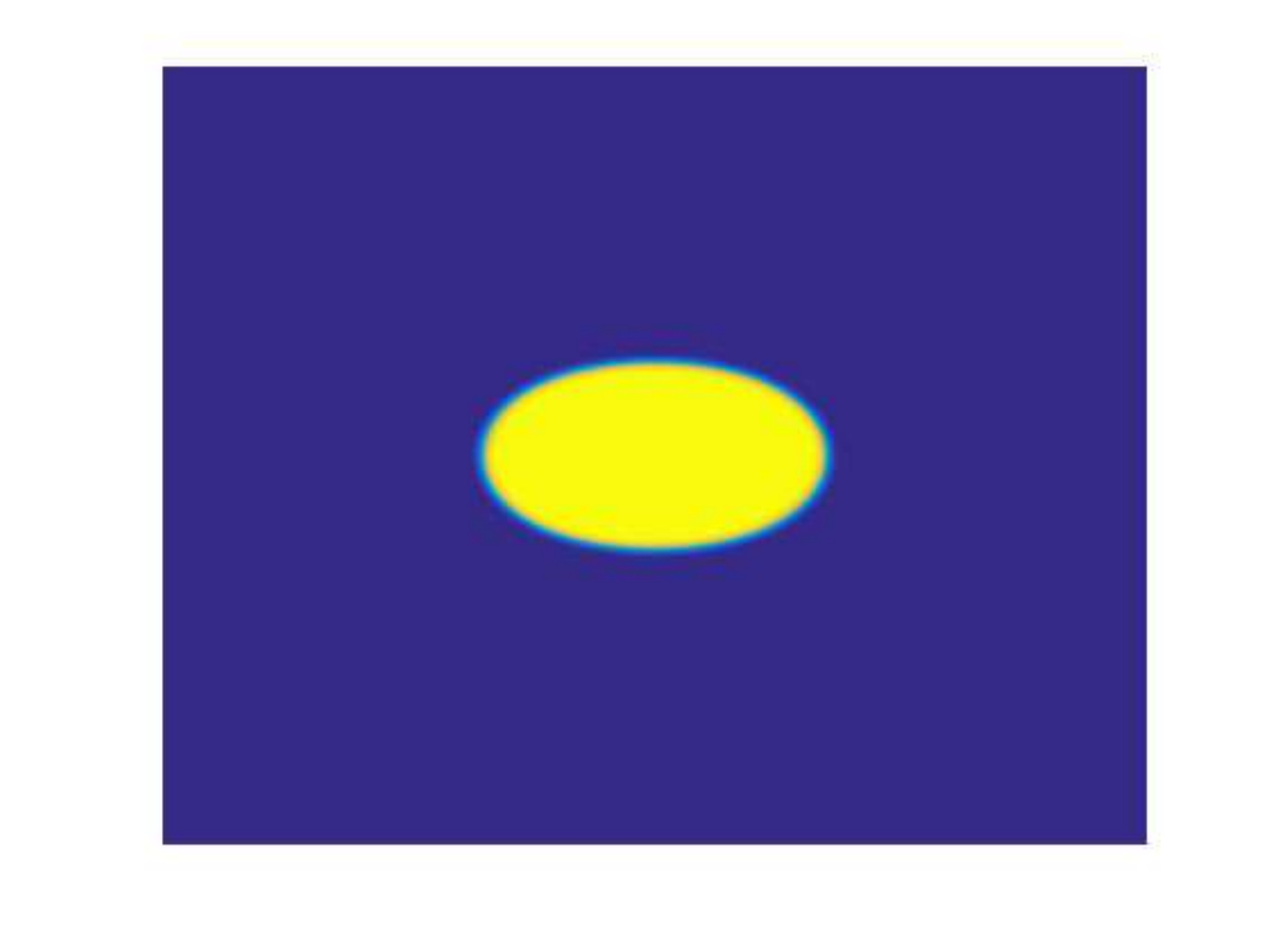}
}
\subfigure[t=5.6]
{
\includegraphics[width=4cm,height=4cm]{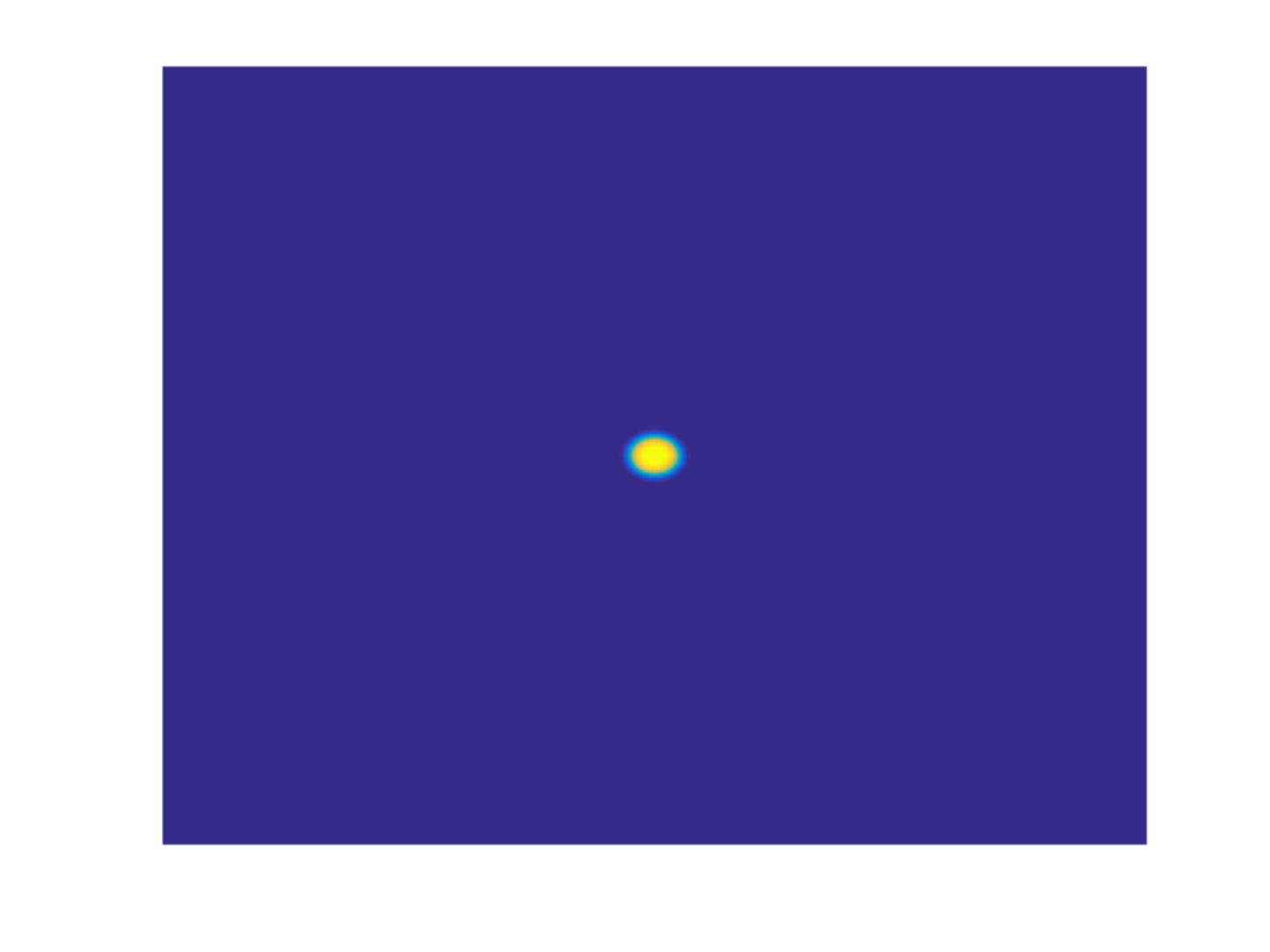}
}
\caption{Snapshots of the phase variable $\phi$ are taken at t=0, 0.02, 0.5, 1.5, 4, 5.6 for example 2.}\label{fig:fig1}
\end{figure}
\begin{figure}[htp]
\centering
\includegraphics[width=7cm,height=7cm]{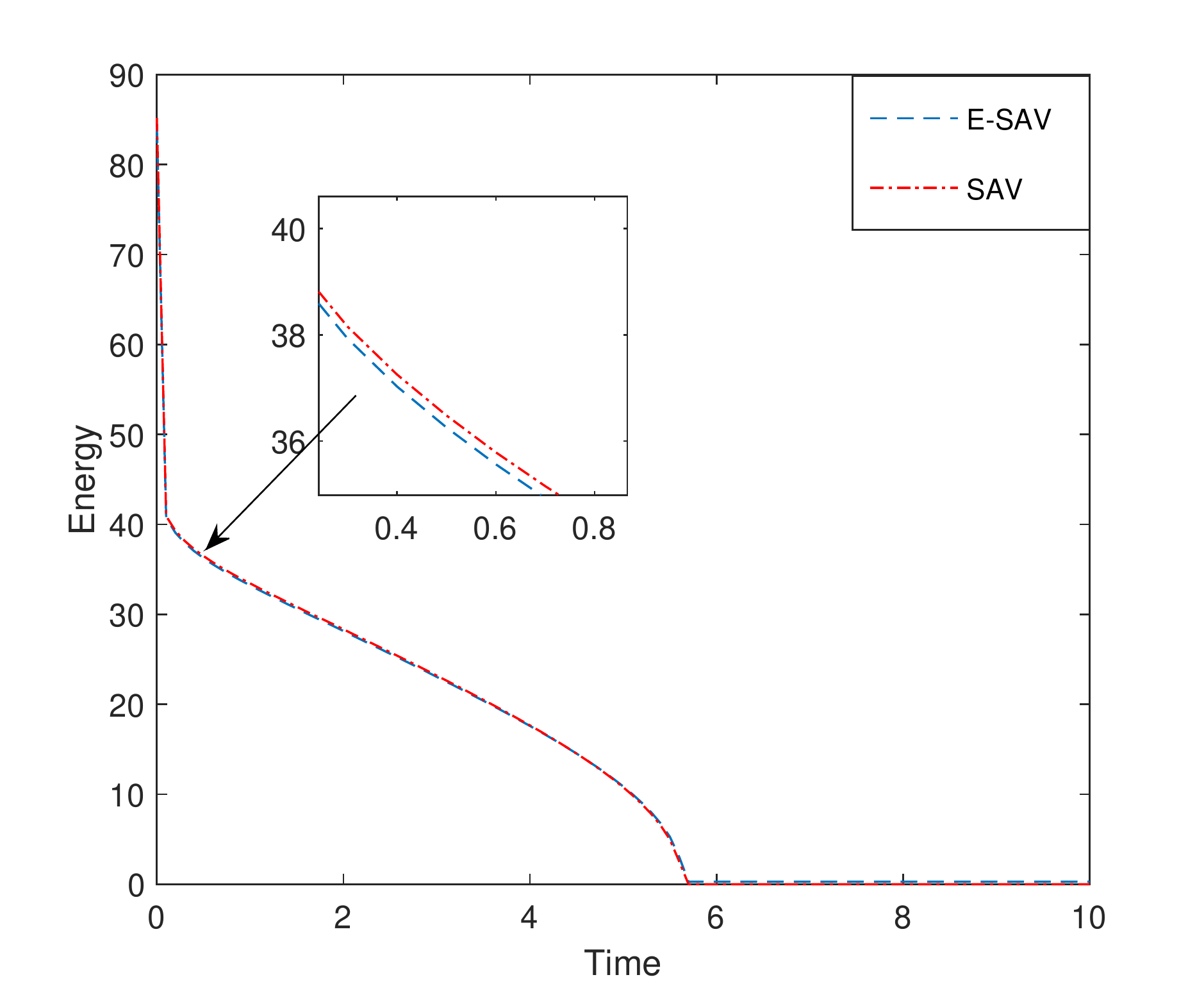}
\includegraphics[width=7cm,height=7cm]{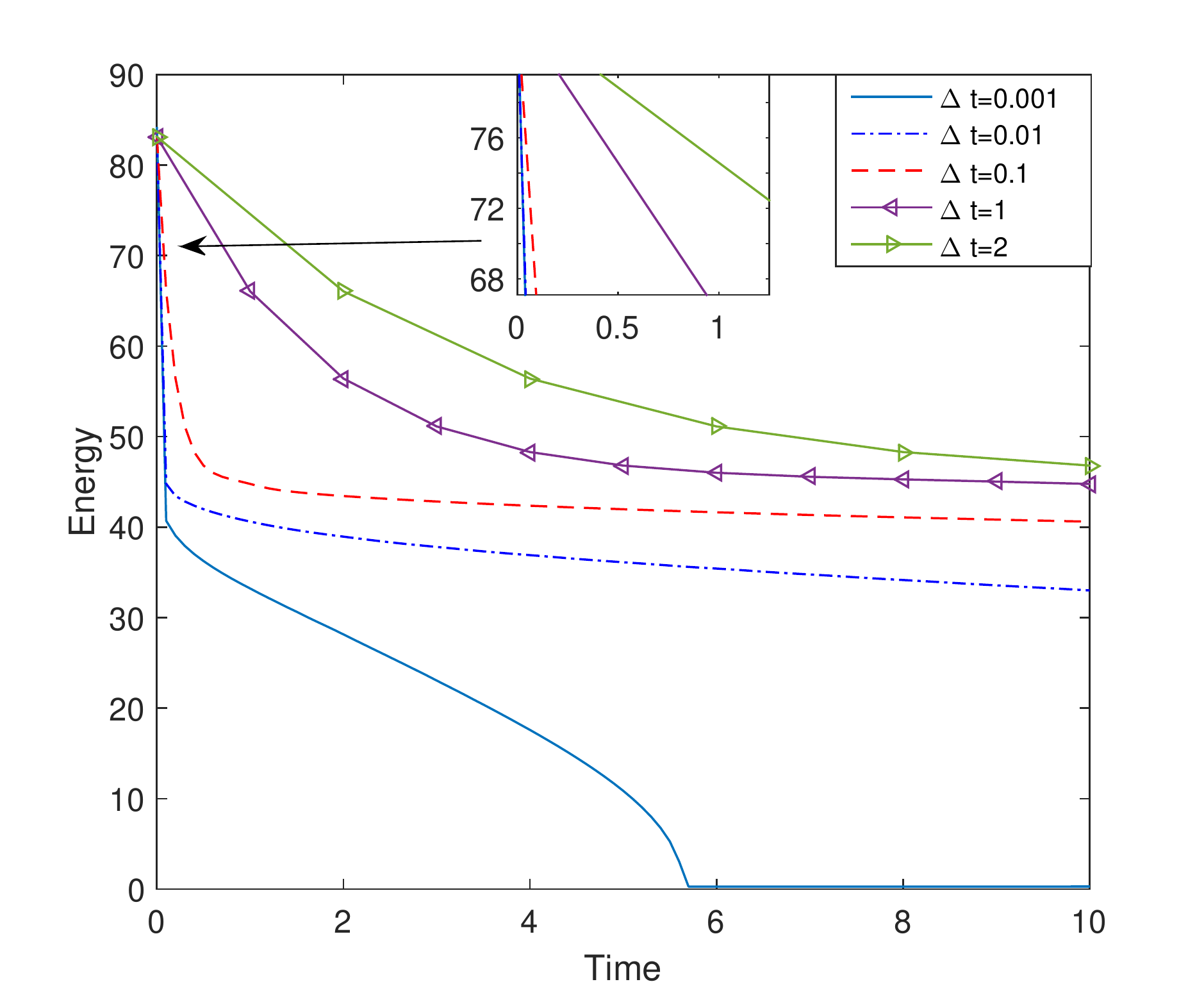}
\caption{Left: energy evolution of E-SAV and SAV approaches for example 2 with $\Delta t=0.001$. Right: time evolution of the energy functional for five different time steps of $\Delta t=0.001$, $0.01$, $0.1$, $1$ and $2$.}\label{fig:fig2}
\end{figure}

\textbf{Example 3}: In the following, we solve a benchmark problem for the Cahn-Hilliard equation on $[0,2\pi)^2$ which can also be seen in many articles such as \cite{shen2018scalar}. We take $\epsilon=0.02$, $M=0.1$ and discretize the space by the Fourier spectral method with $256\times256$ modes. The initial condition is chosen as the following
\begin{equation*}
\aligned
\phi_0(x,y,0)=0.25+0.4Rand(x,y),
\endaligned
\end{equation*}
where $Rand(x,y)$ is a randomly generated function.

Snapshots of the phase variable $\phi$ taken at $t=0.02$, $0.5$, $3$, and $20$ with $\Delta t=0.01$ are shown in Figure \ref{fig:fig3}. The phase separation and coarsening process can be observed very simply which is consistent with the results in \cite{ShenA}.
\begin{figure}[htp]
\centering
\includegraphics[width=4cm,height=4cm]{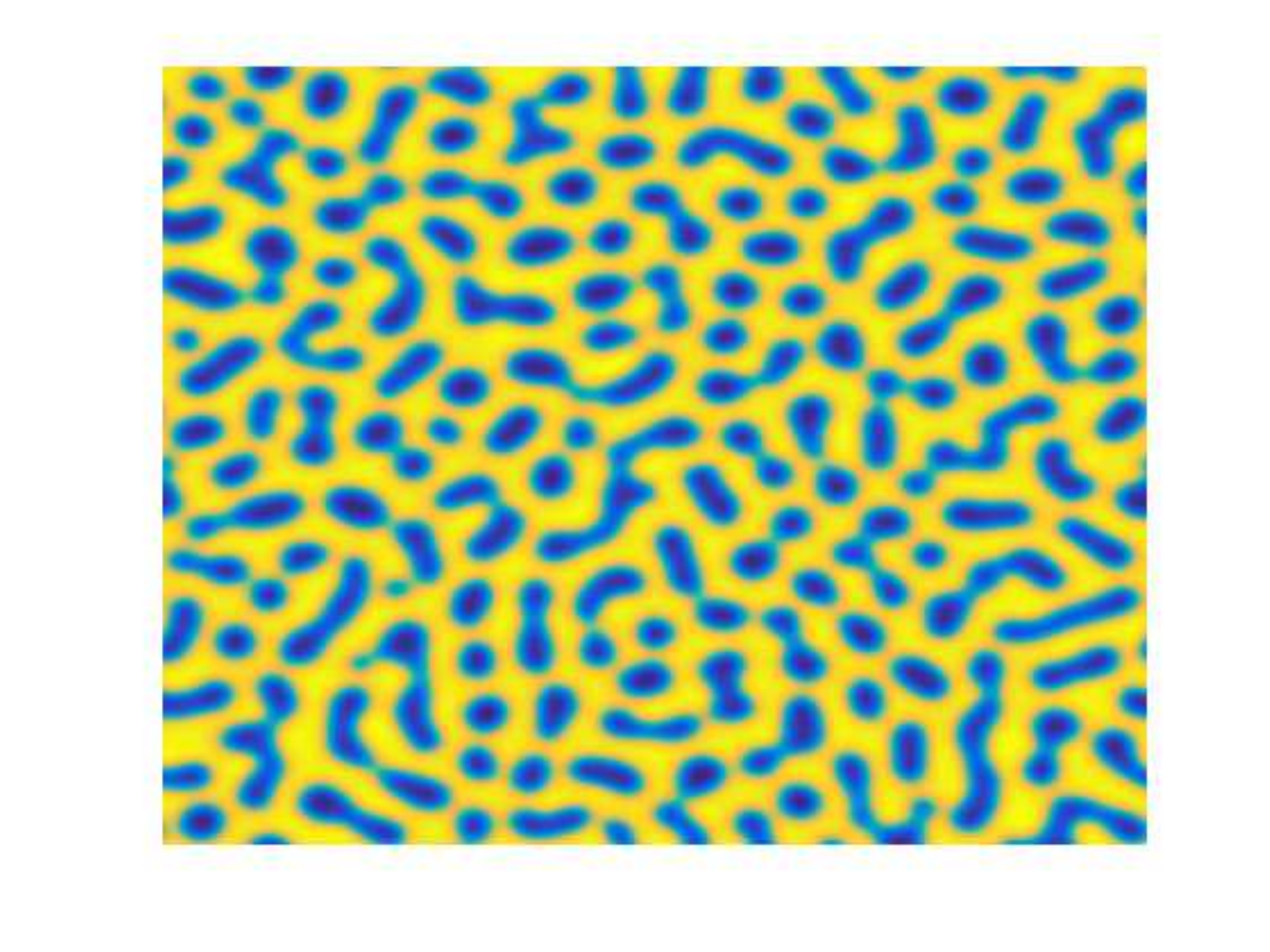}
\includegraphics[width=4cm,height=4cm]{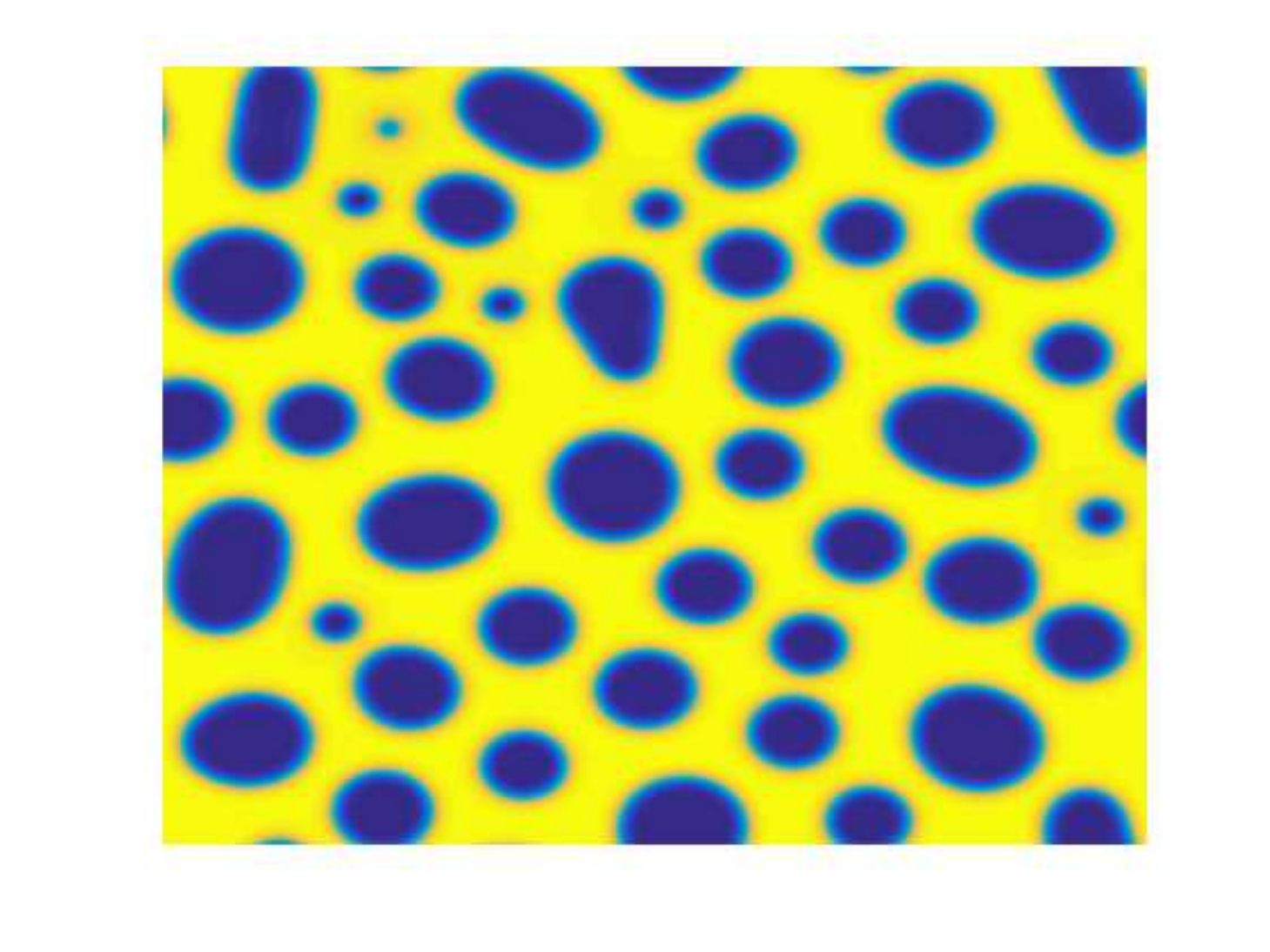}
\includegraphics[width=4cm,height=4cm]{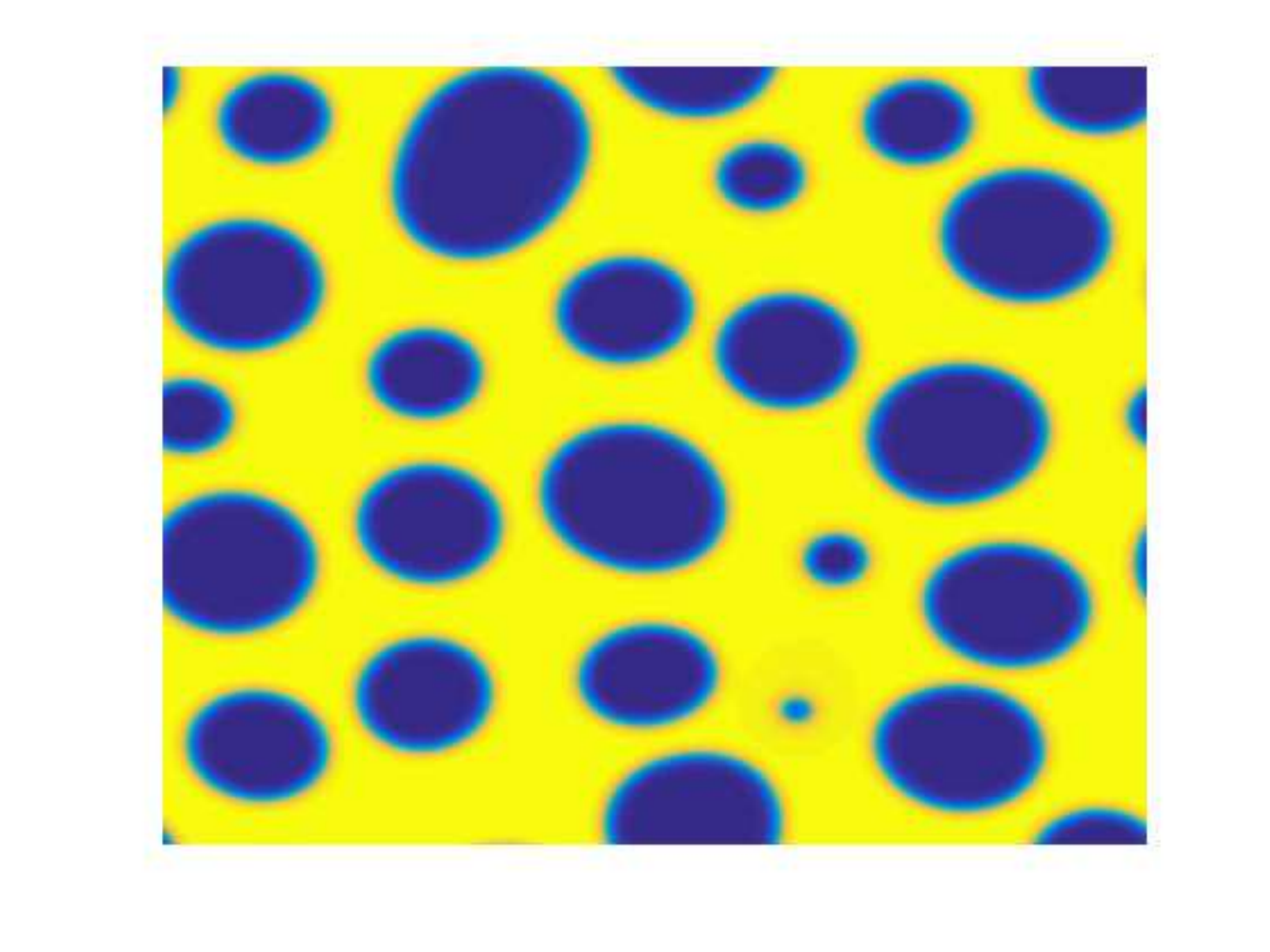}
\includegraphics[width=4cm,height=4cm]{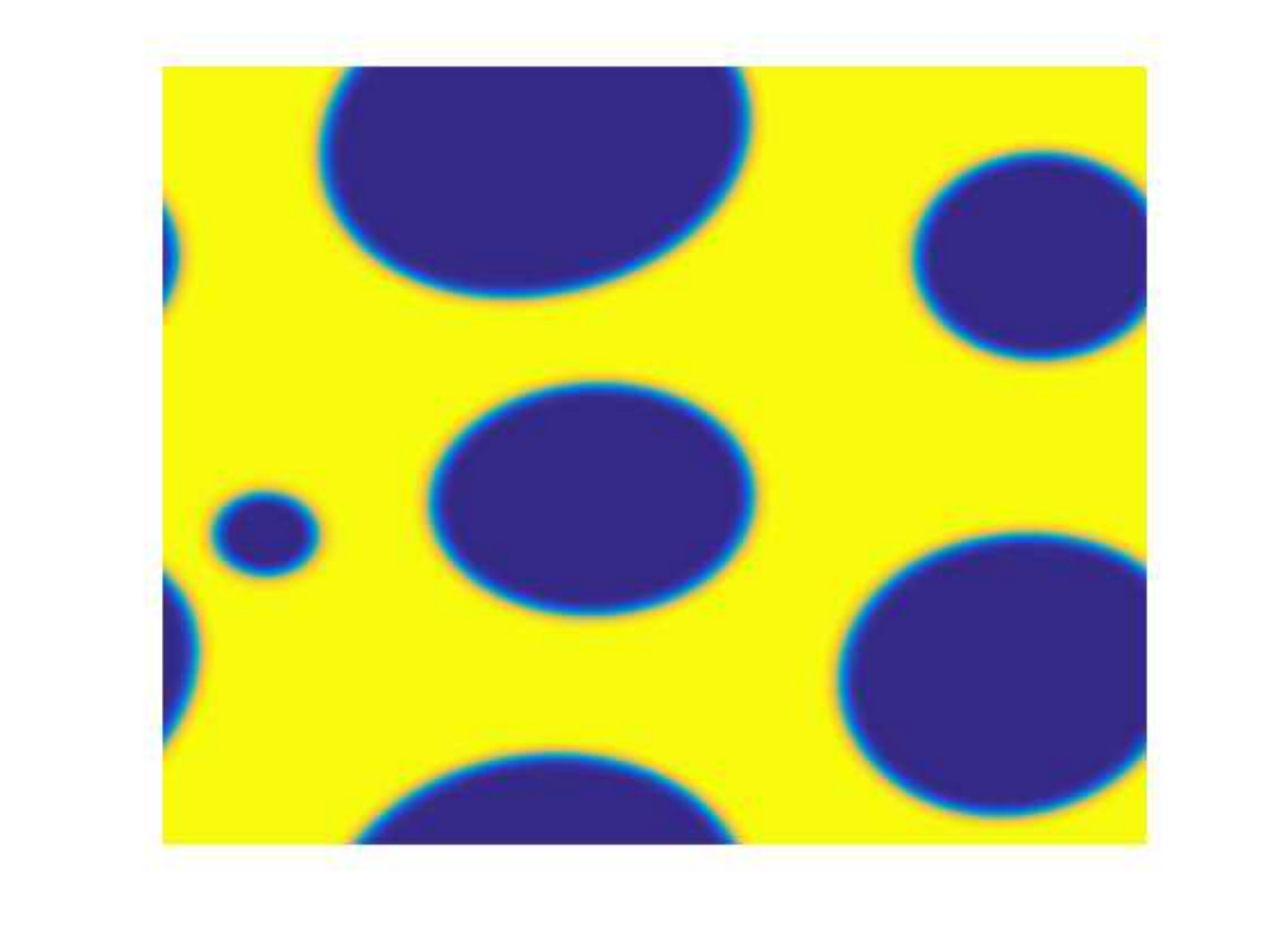}
\caption{Snapshots of the phase variable $\phi$ are taken at t=0.02, 0.5, 3, 20 for example 3.}\label{fig:fig3}
\end{figure}
\subsection{Phase field crystal equations}
A weakness of the traditional phase field methodology is that it is usually formulated in terms of fields that are spatially uniform in equilibrium. Elder \cite{elder2002modeling} firstly proposed the phase field crystal (PFC) model based on density functional theory in 2002. This model can simulate the evolution of crystalline microstructure on atomistic length and diffusive time scales. It naturally incorporates elastic and plastic deformations and multiple crystal orientations, and can be applied to many different physical phenomena.

In particular, consider the following Swift-Hohenberg free energy:
\begin{equation*}
E(\phi)=\int_{\Omega}\left(\frac{1}{4}\phi^4+\frac{1}{2}\phi\left(-\epsilon+(1+\Delta)^2\right)\phi\right)d\textbf{x},
\end{equation*}
where $\textbf{x} \in \Omega \subseteq \mathbb{R}^d$, $\phi$ is the density field and $\epsilon$ is a positive bifurcation constant with physical significance. $\Delta$ is the Laplacian operator.

Considering a gradient flow in $H^{-1}$, one can obtain the phase field crystal equation under the constraint of mass conservation as follows:
\begin{equation*}
\frac{\partial \phi}{\partial t}=\Delta\mu=\Delta\left(\phi^3-\epsilon\phi+(1+\Delta)^2\phi\right), \quad(\textbf{x},t)\in\Omega\times Q,
\end{equation*}
which is a sixth-order nonlinear parabolic equation and can be applied to simulate various phenomena such as crystal growth, material hardness and phase transition. Here $Q=(0,T]$, $\mu=\frac{\delta E}{\delta \phi}$ is called the chemical potential.

Next, we plan to simulate the phase transition behavior of the phase field crystal model. The similar numerical example can be found in many articles such as \cite{li2017efficient,yang2017linearly}. In the following example 4, we study the crystal growth in a supercooled liquid in two dimension. This example serves to show the applicability of our phase field crystal model to a physical problem \cite{li2017efficient}.

\textbf{Example 4}: In the following, we take $\epsilon=0.25$, to start our simulation on a domain $[0,800]\times[0,800]$ with a $512\times512$ mesh grid by Fourier spectral method in space and first-order E-SAV scheme in time. We generated the three crystallites using random perturbations on four small square pathes. The following expression will be used to define the crystallites such as in \cite{zhang2019efficient}:
\begin{equation*}
\phi(x_l,y_l)=\overline{\phi}+C\left(\cos(\frac{q}{\sqrt{3}}y_l)\cos(qx_l)-\frac12\cos(\frac{2q}{\sqrt{3}}y_l)\right),
\end{equation*}
where $x_l$, $y_l$ define a local system of cartesian coordinates that is oriented with the crystallite lattice. The parameters $\overline{\phi}=0.285$, $C=0.446$ and $q=0.66$. The local cartesian system is defined as
\begin{equation*}
\aligned
&x_l(x,y)=xsin\theta+ycos\theta,\\
&y_l(x,y)=-xcos\theta+ysin\theta.
\endaligned
\end{equation*}
The centers of three pathes are located at $(150,150)$, $(200,250)$ and $(250,150)$ with $\theta=\pi/4$, $0$, and $-\pi/4$. The length of each square is 40. Figure \ref{fig:fig4} shows the snapshots of the density field $\phi$ at different times. We observe the growth of the crystalline phase. We plot the energy dissipative curve in \ref{fig:fig5} using three different time steps of $\delta t=0.01$, $0,1$ and $1$. One can observe that the energy decreases at all times no matter big or small time steps. This expression of unconditionally energy stable proved the efficiency of our proposed algorithm, as predicted by the former theory.
\begin{figure}[htp]
\centering
\subfigure[t=0]{
\includegraphics[width=4cm,height=4cm]{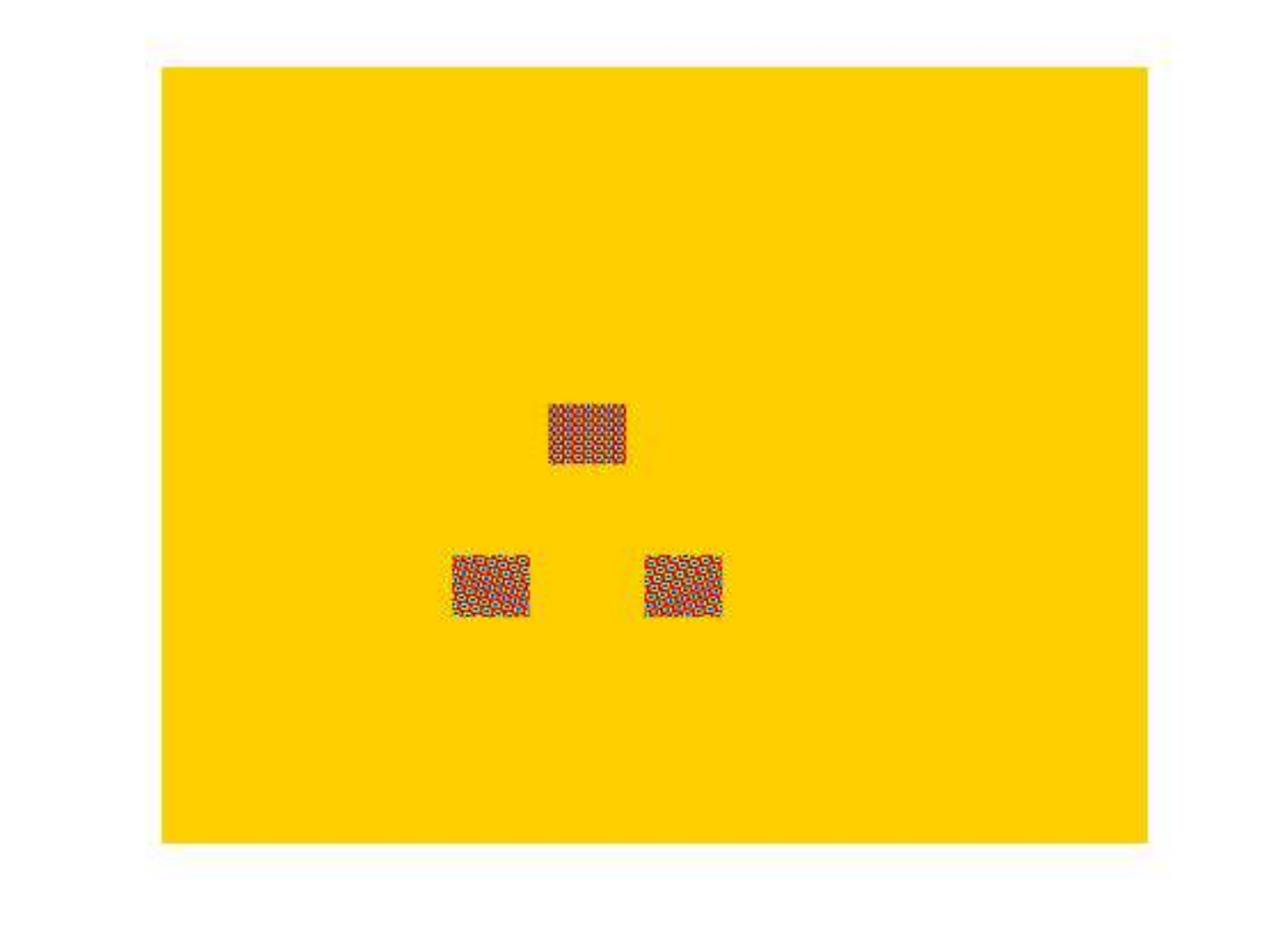}
}
\subfigure[t=150]
{
\includegraphics[width=4cm,height=4cm]{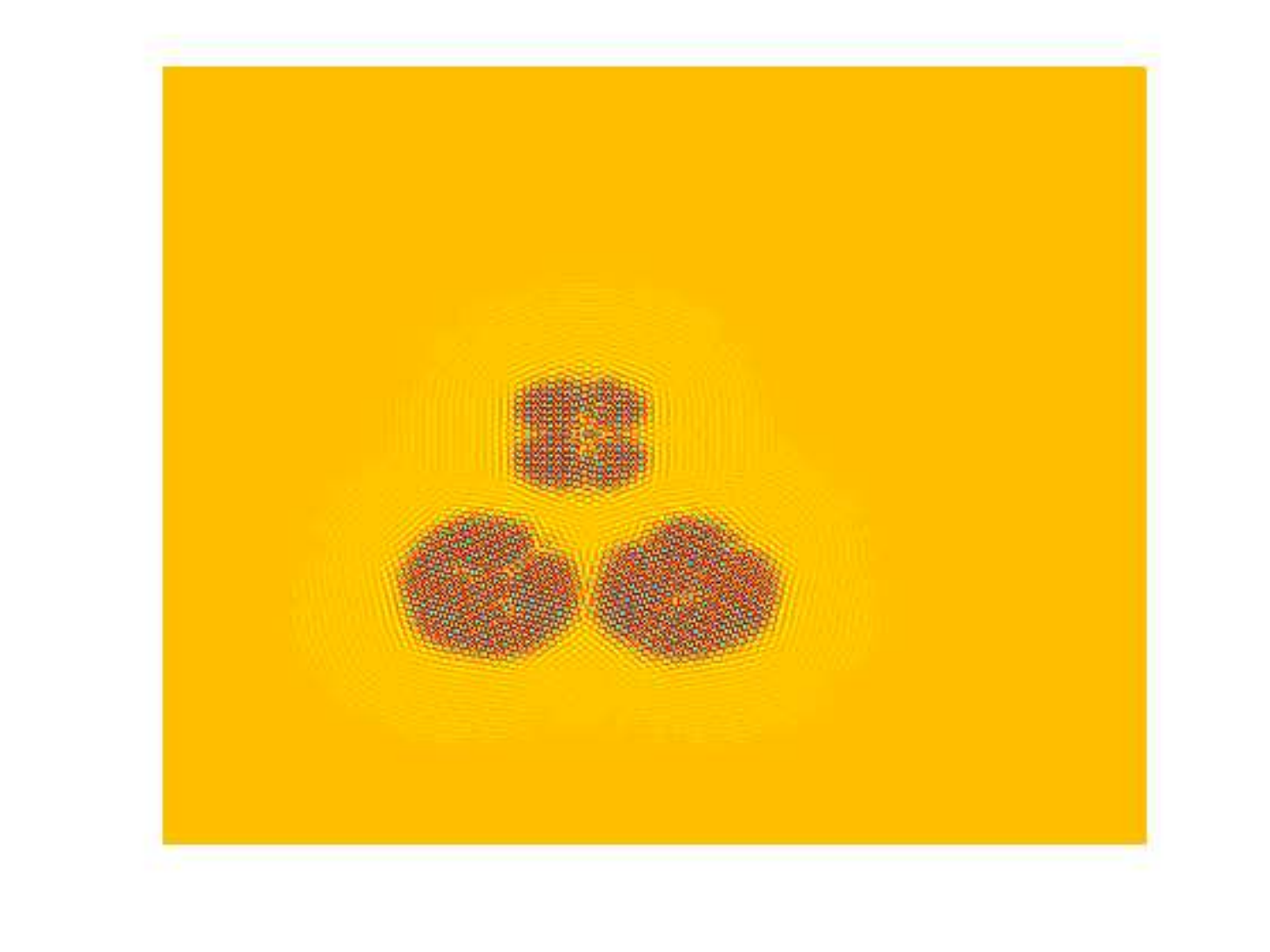}
}
\subfigure[t=400]
{
\includegraphics[width=4cm,height=4cm]{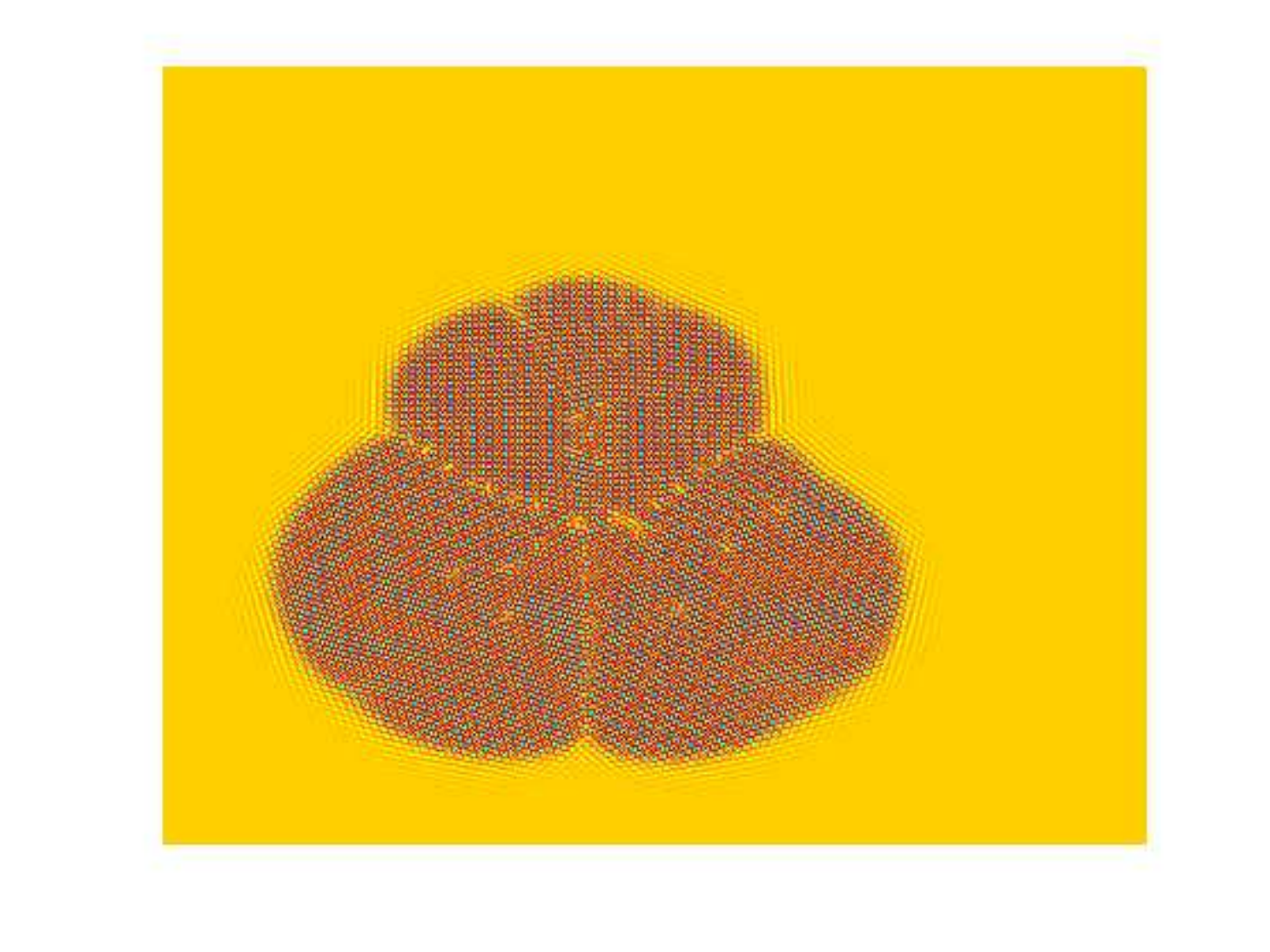}
}
\quad
\subfigure[t=600]{
\includegraphics[width=4cm,height=4cm]{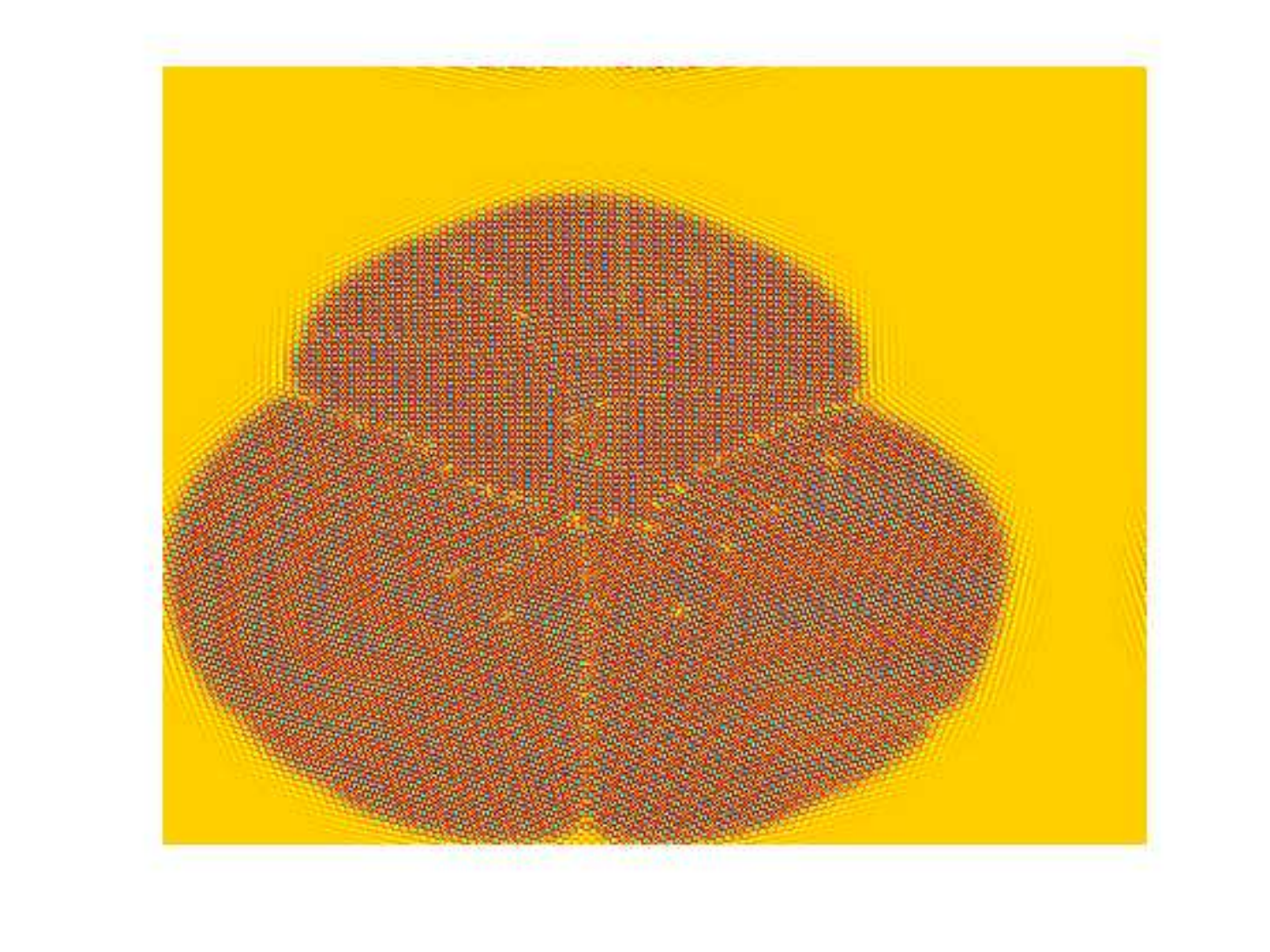}
}
\subfigure[t=900]
{
\includegraphics[width=4cm,height=4cm]{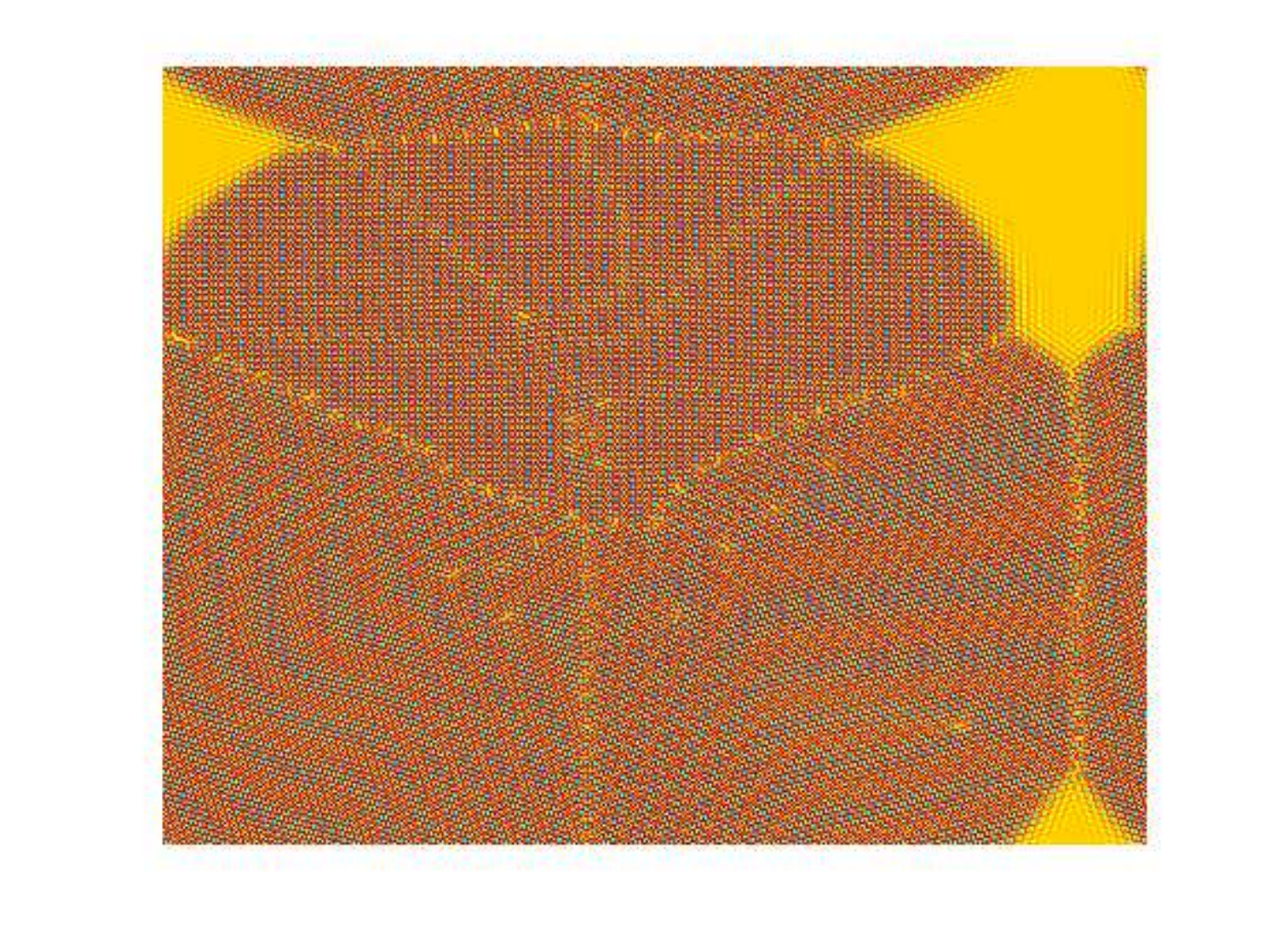}
}
\subfigure[t=1200]
{
\includegraphics[width=4cm,height=4cm]{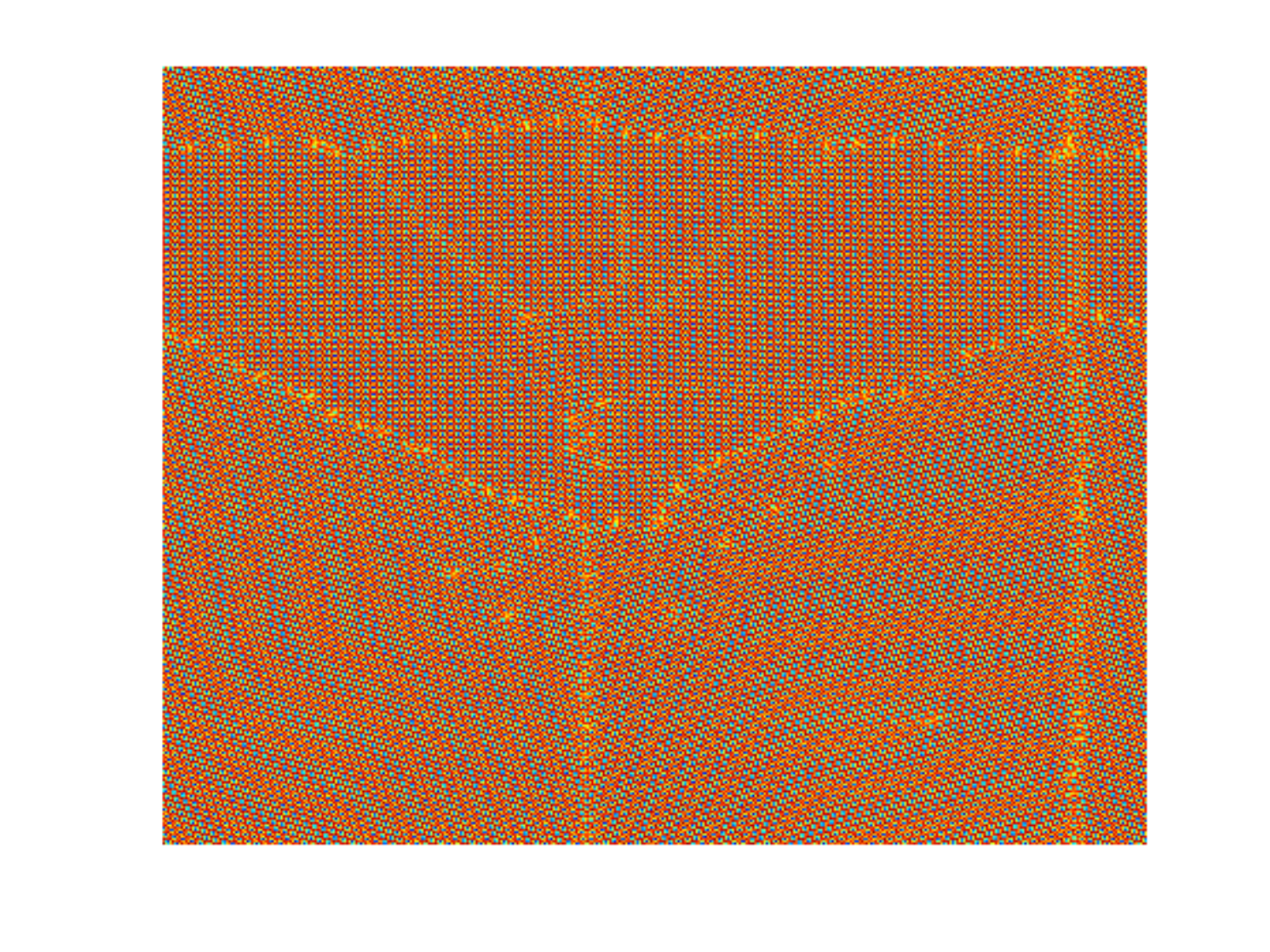}
}
\caption{Snapshots of the phase variable $\phi$ are taken at t=0, 150, 400, 600, 900, 1200 for example 4.}\label{fig:fig4}
\end{figure}
\begin{figure}[htp]
\centering
\includegraphics[width=10cm,height=7cm]{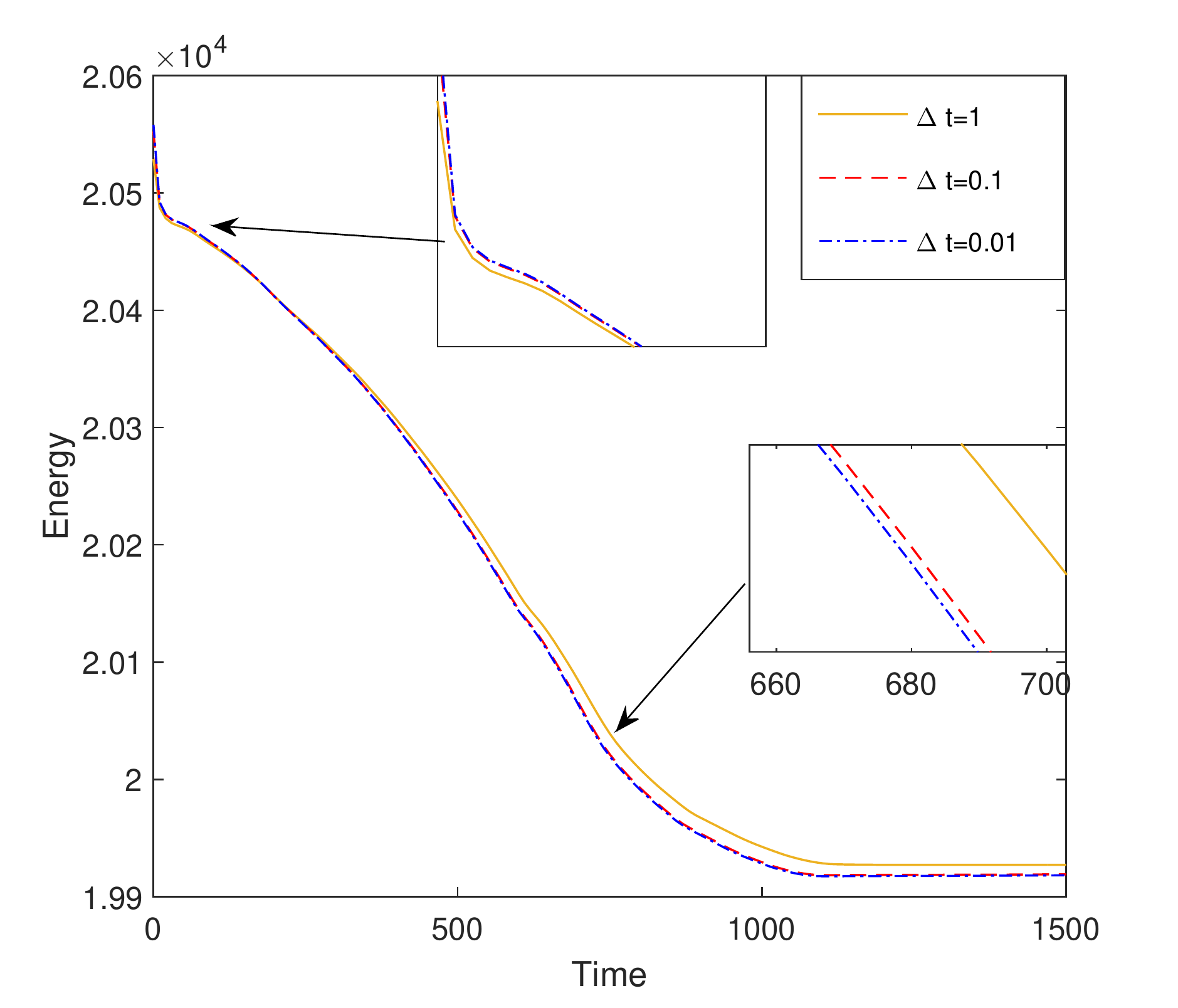}
\caption{Energy evolution for the phase field crystal equation for example 4 using E-SAV approaches with different time steps of $\delta t=0.01$, $0.1$ and $1$.}\label{fig:fig5}
\end{figure}

In the following, we will consider example 5 to check the difference of phase transition behavior between the proposed E-SAV method and traditional SAV approach.

\textbf{Example 5}: The initial condition is
\begin{equation*}
\aligned
&\phi_0(x,y)=0.07+0.07Rand(x,y),
\endaligned
\end{equation*}
where the $Rand(x,y)$ is the random number in $[-1,1]$ with zero mean. The order parameter is $\epsilon=0.025$, the computational domain $\Omega=[0,128]^2$. we set $256^2$ Fourier modes to discretize the two dimensional space.

We show the phase transition behavior of the density field at various times in Figure \ref{fig:fig6}. Similar computation results for phase field crystal model can be found in \cite{ShenA,yang2017linearly}. We investigate the process of crystallization in a supercool liquid by using both SAV and the proposed E-SAV schemes. No visible difference is observed.
\begin{figure}[htp]
\centering
\begin{tabular}{ccccc}
SAV&\includegraphics[width=3.6cm,height=3.6cm]{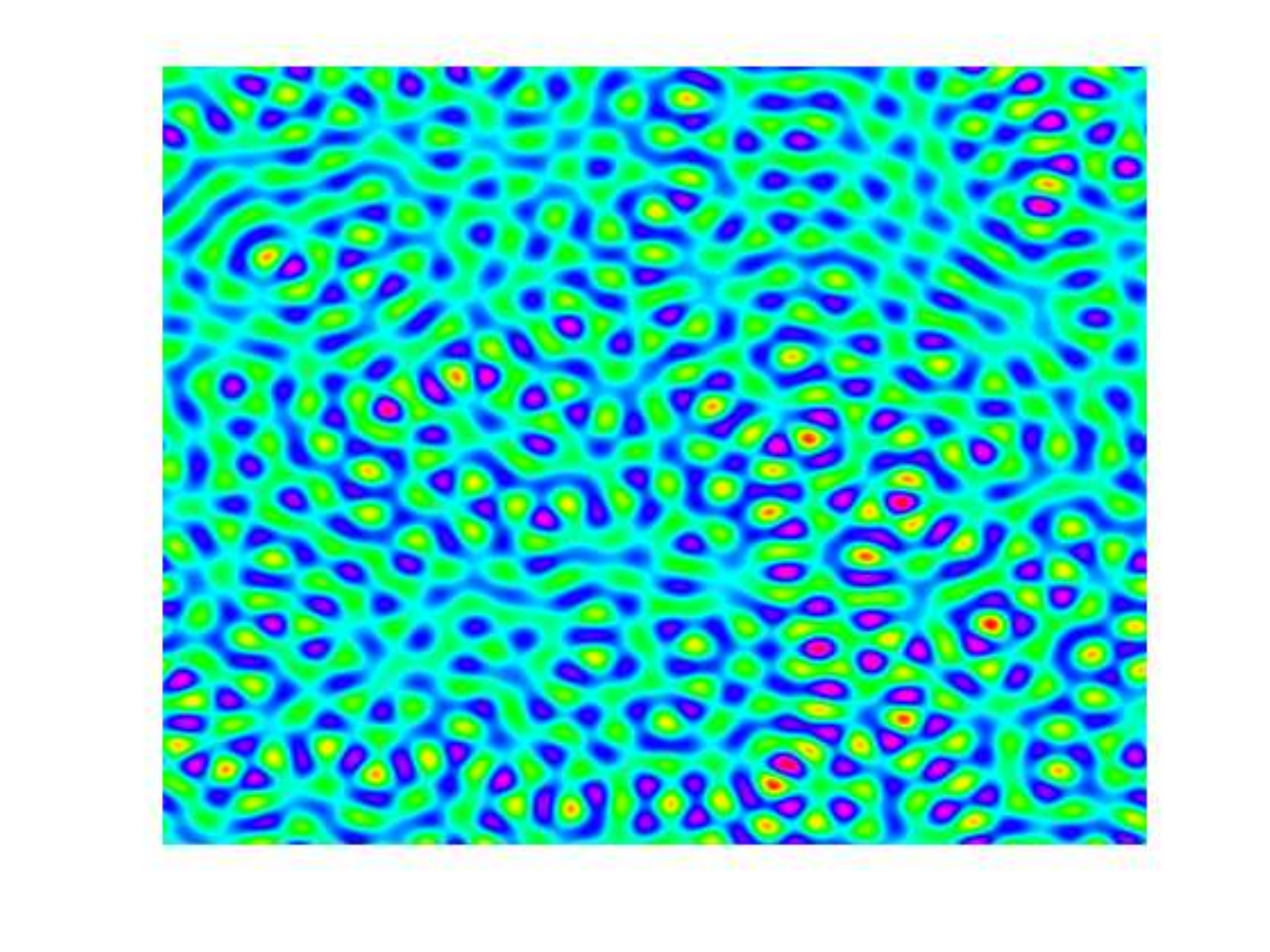}\includegraphics[width=3.6cm,height=3.6cm]{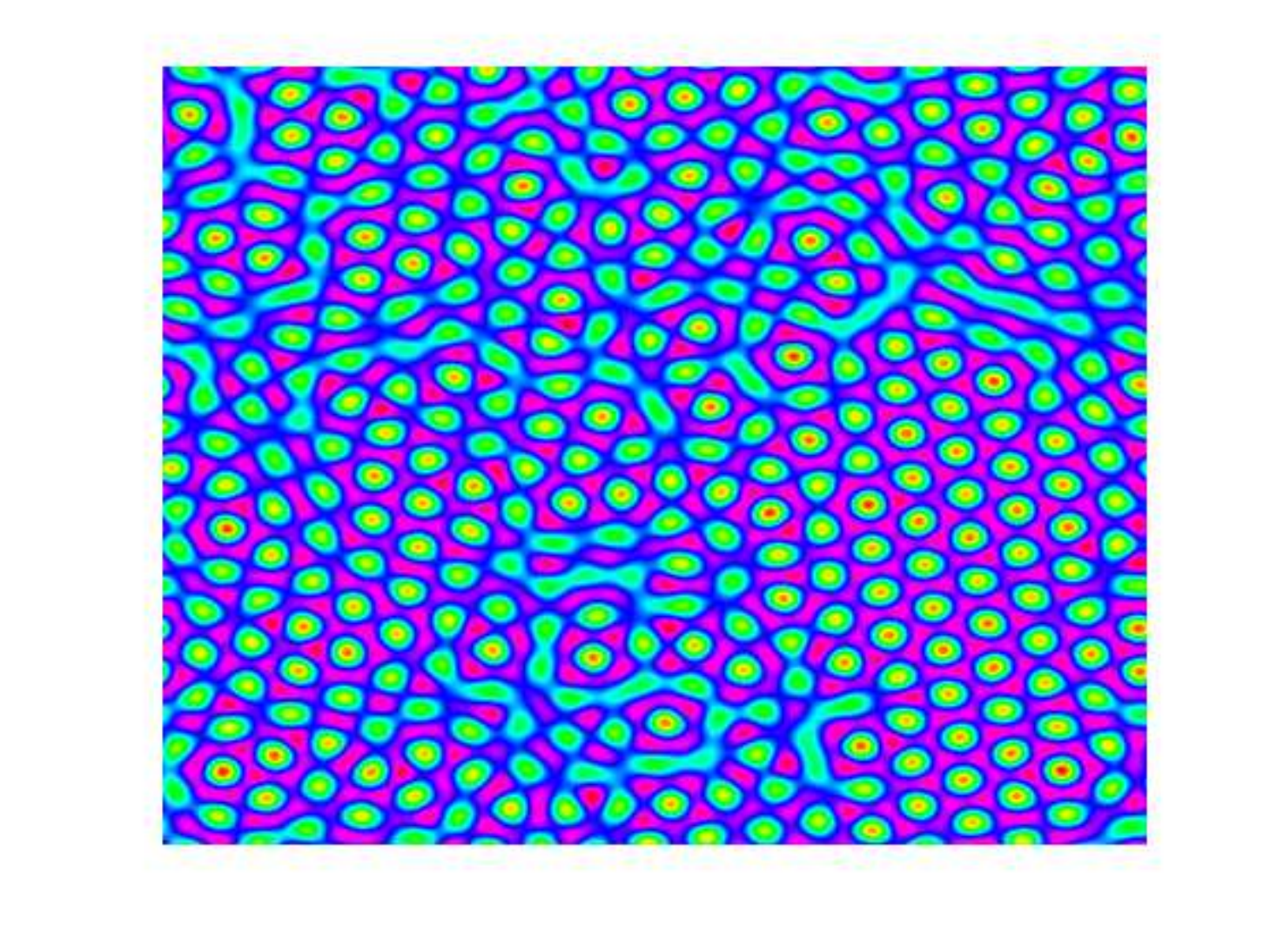}\includegraphics[width=3.6cm,height=3.6cm]{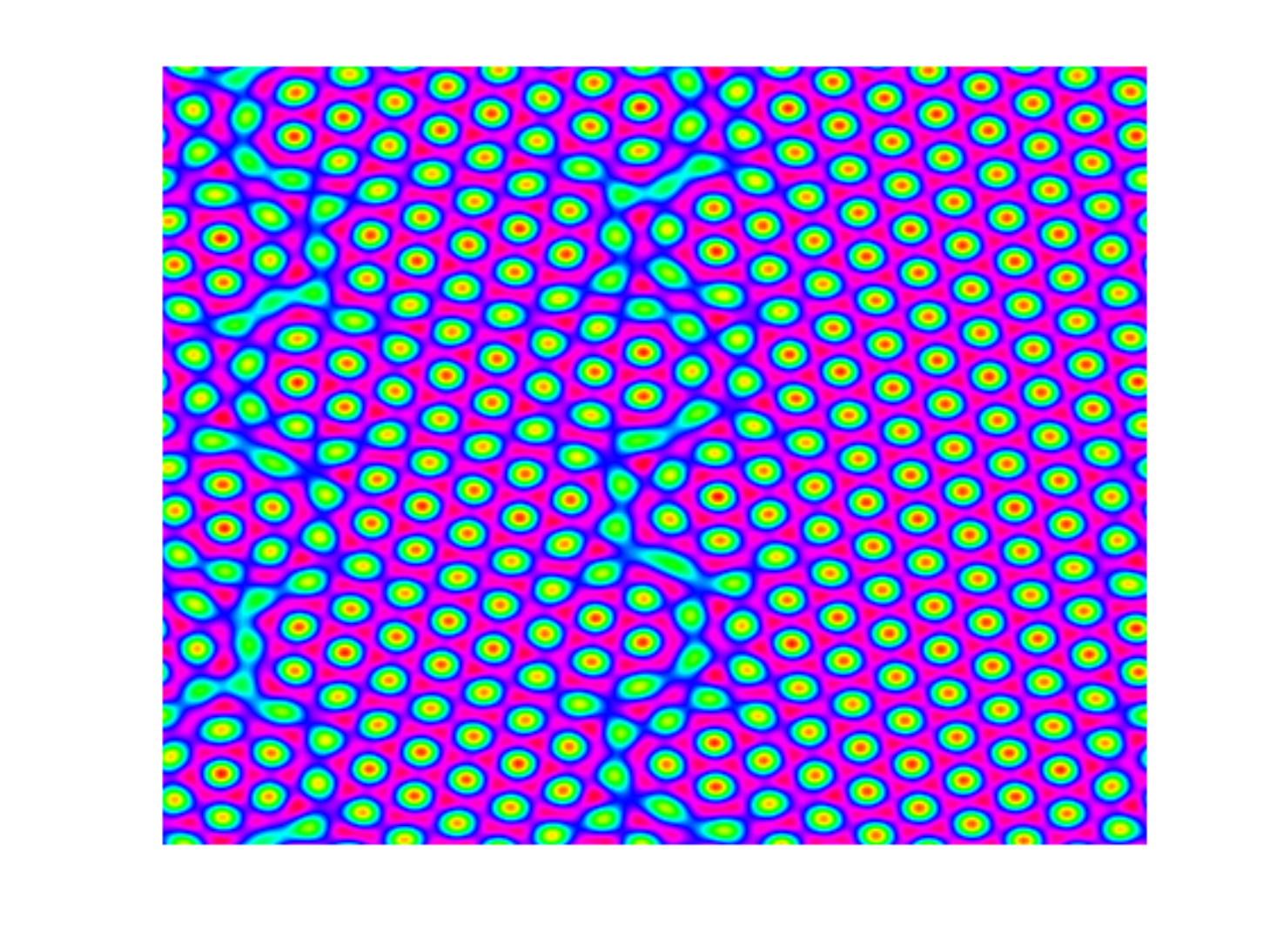}\includegraphics[width=3.6cm,height=3.6cm]{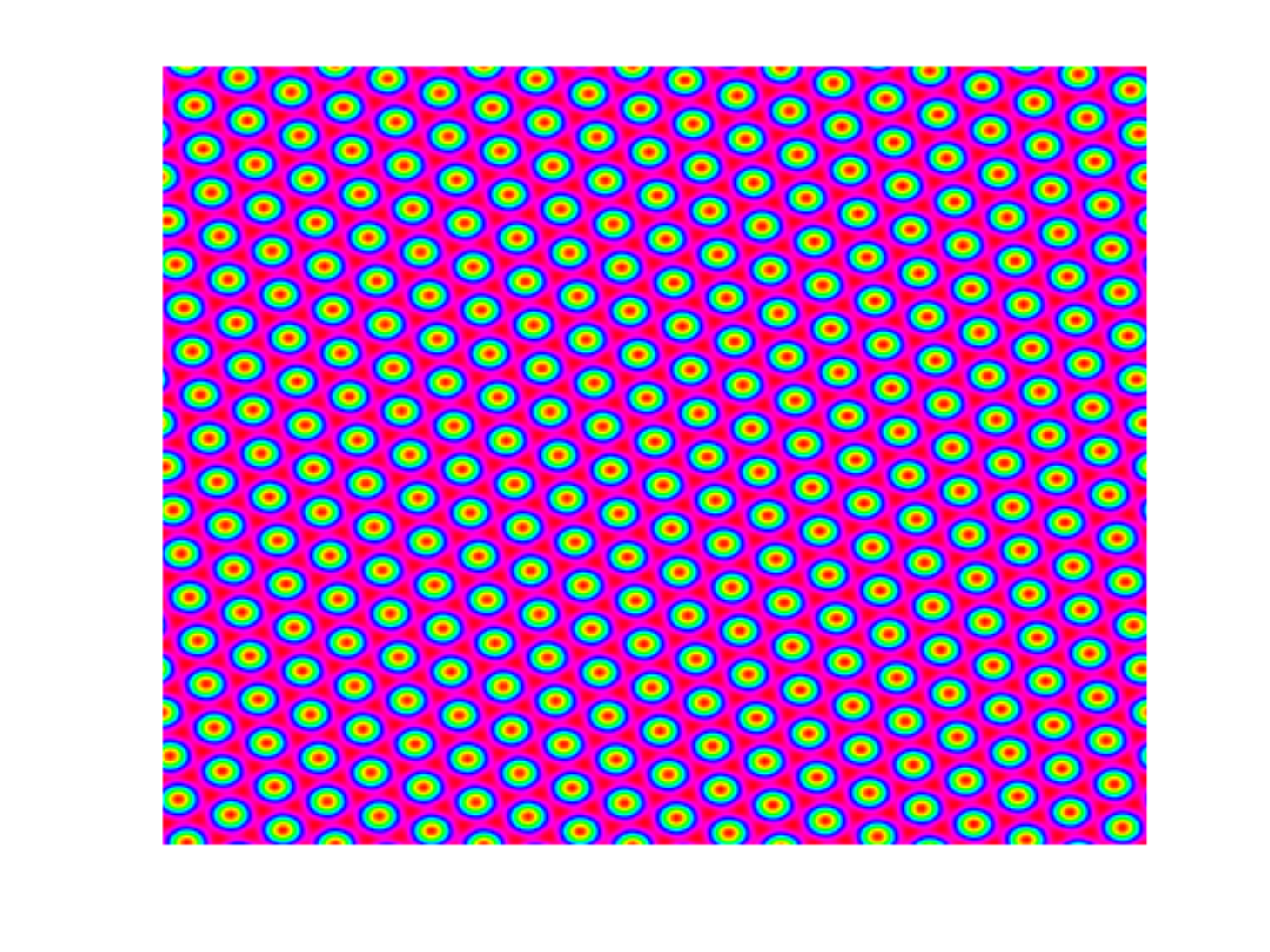}\\
\Xhline{1.2pt}
E-SAV&\includegraphics[width=3.6cm,height=3.6cm]{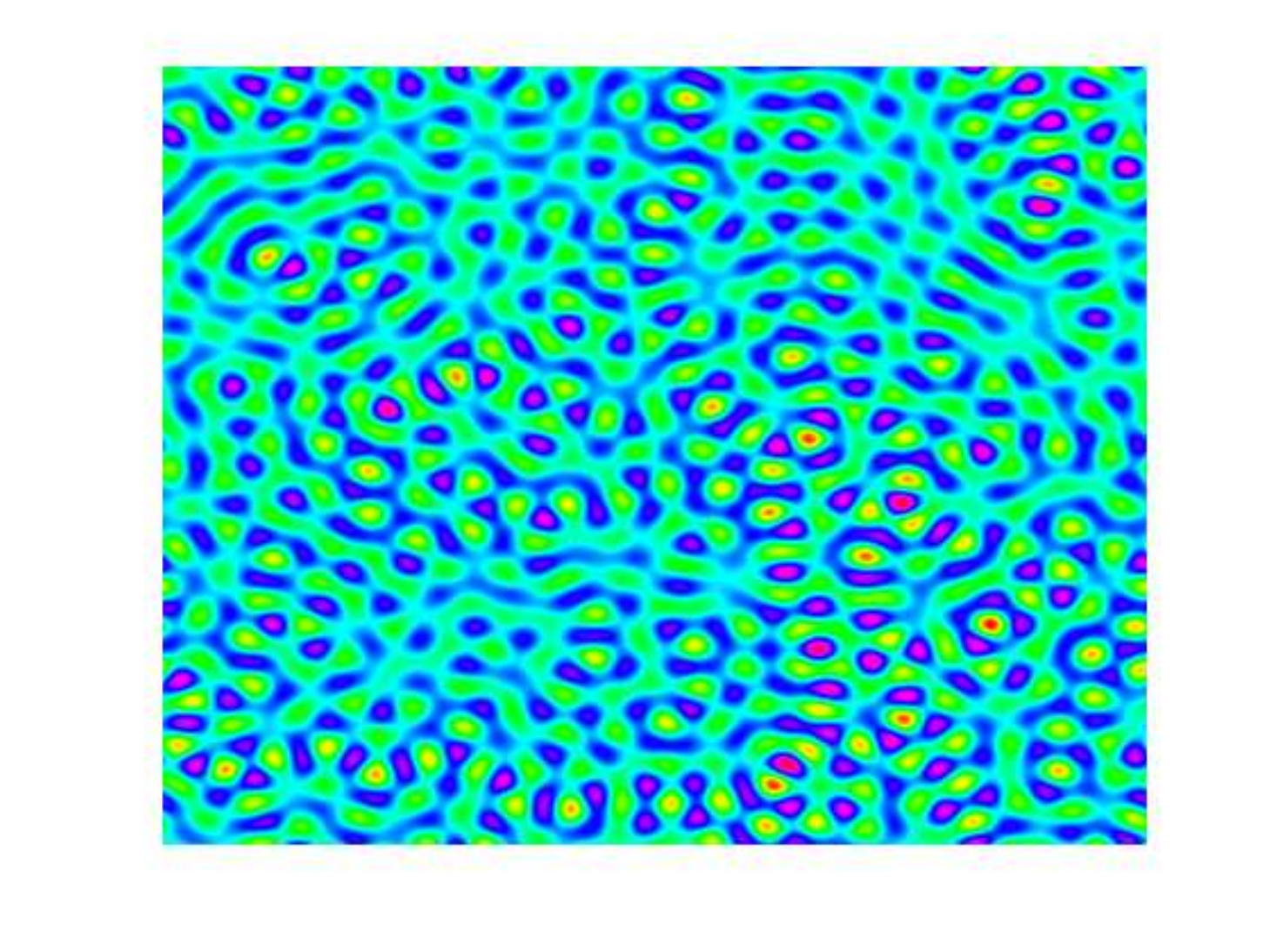}\includegraphics[width=3.6cm,height=3.6cm]{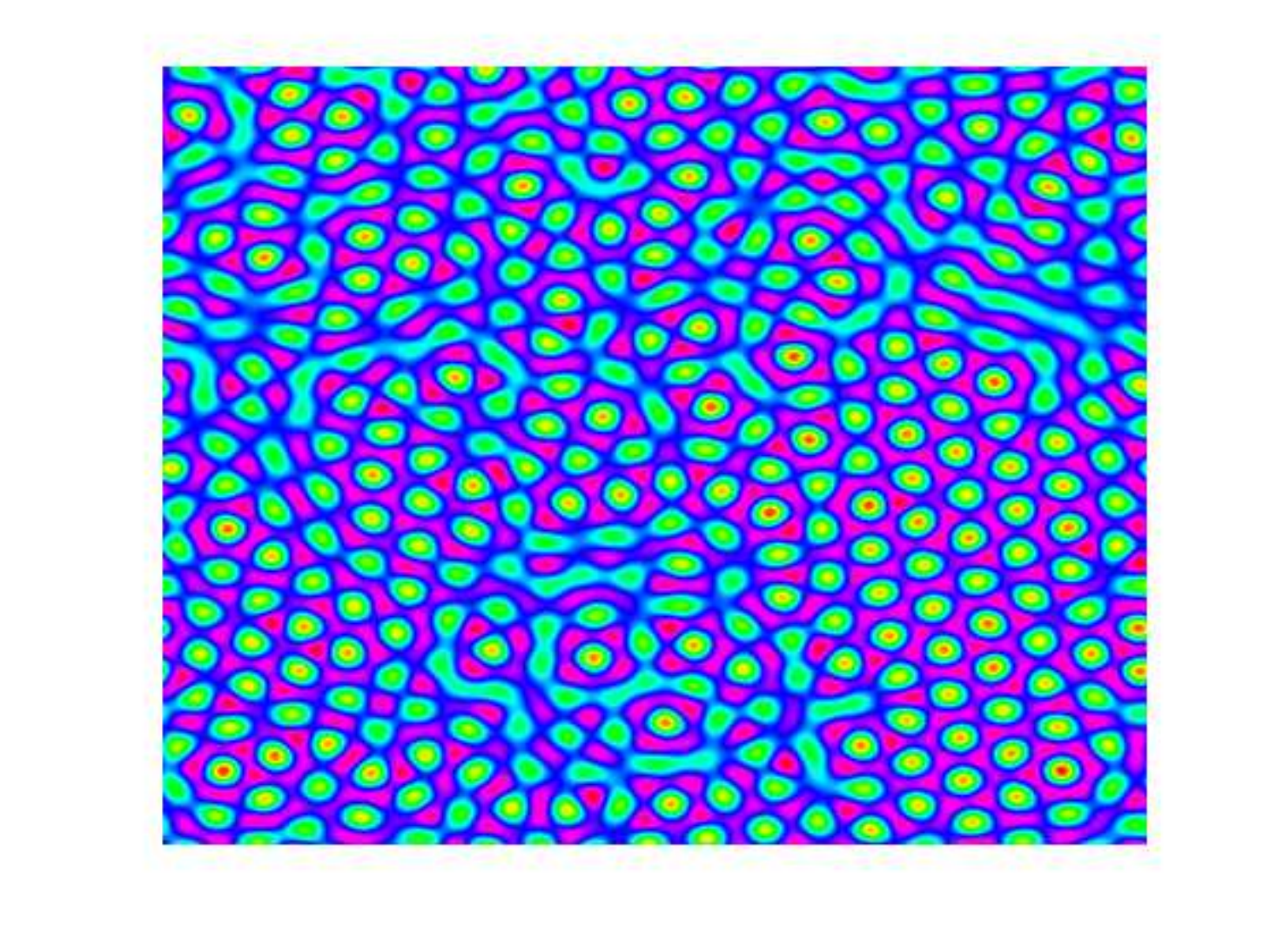}\includegraphics[width=3.6cm,height=3.6cm]{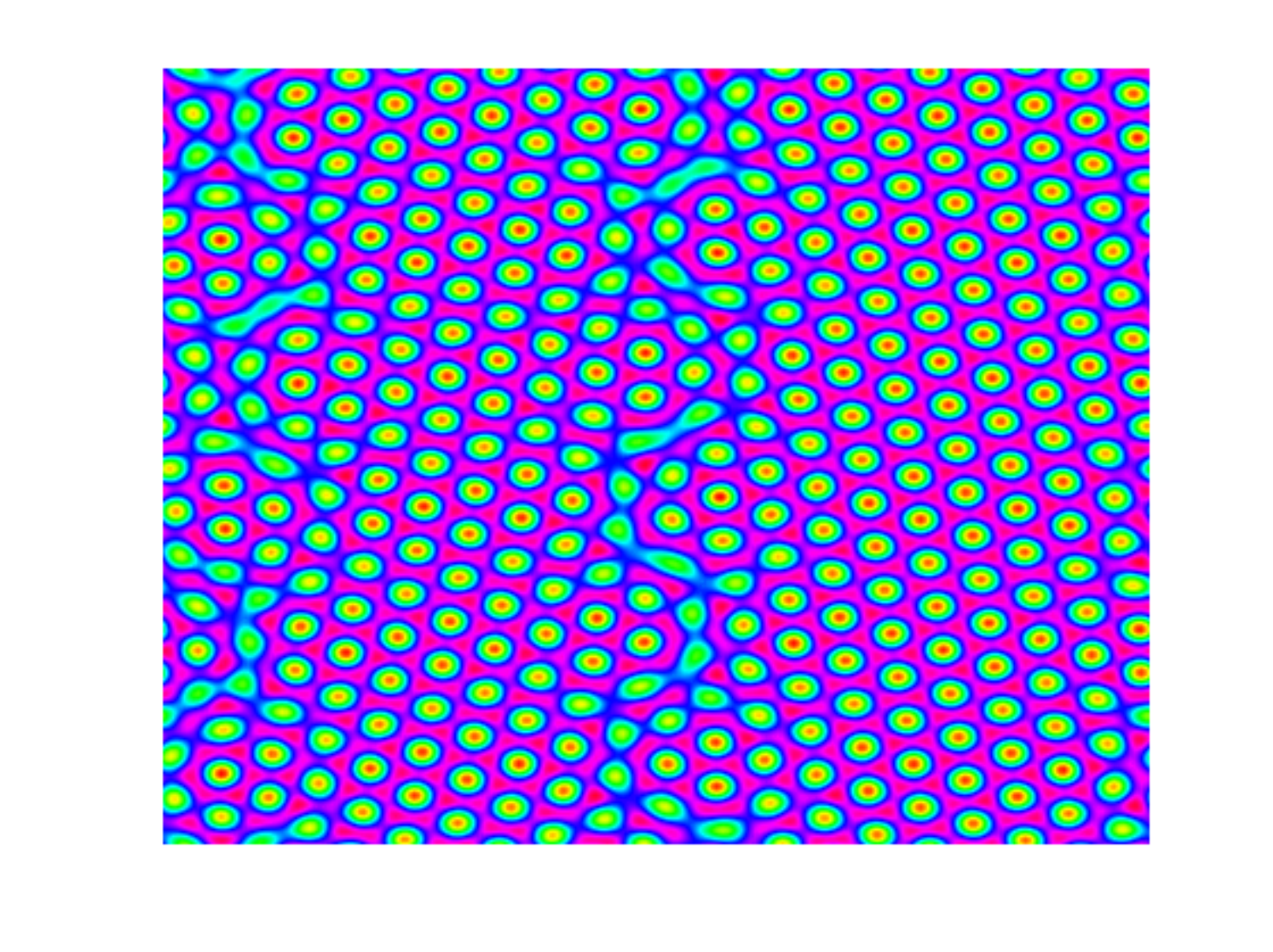}\includegraphics[width=3.6cm,height=3.6cm]{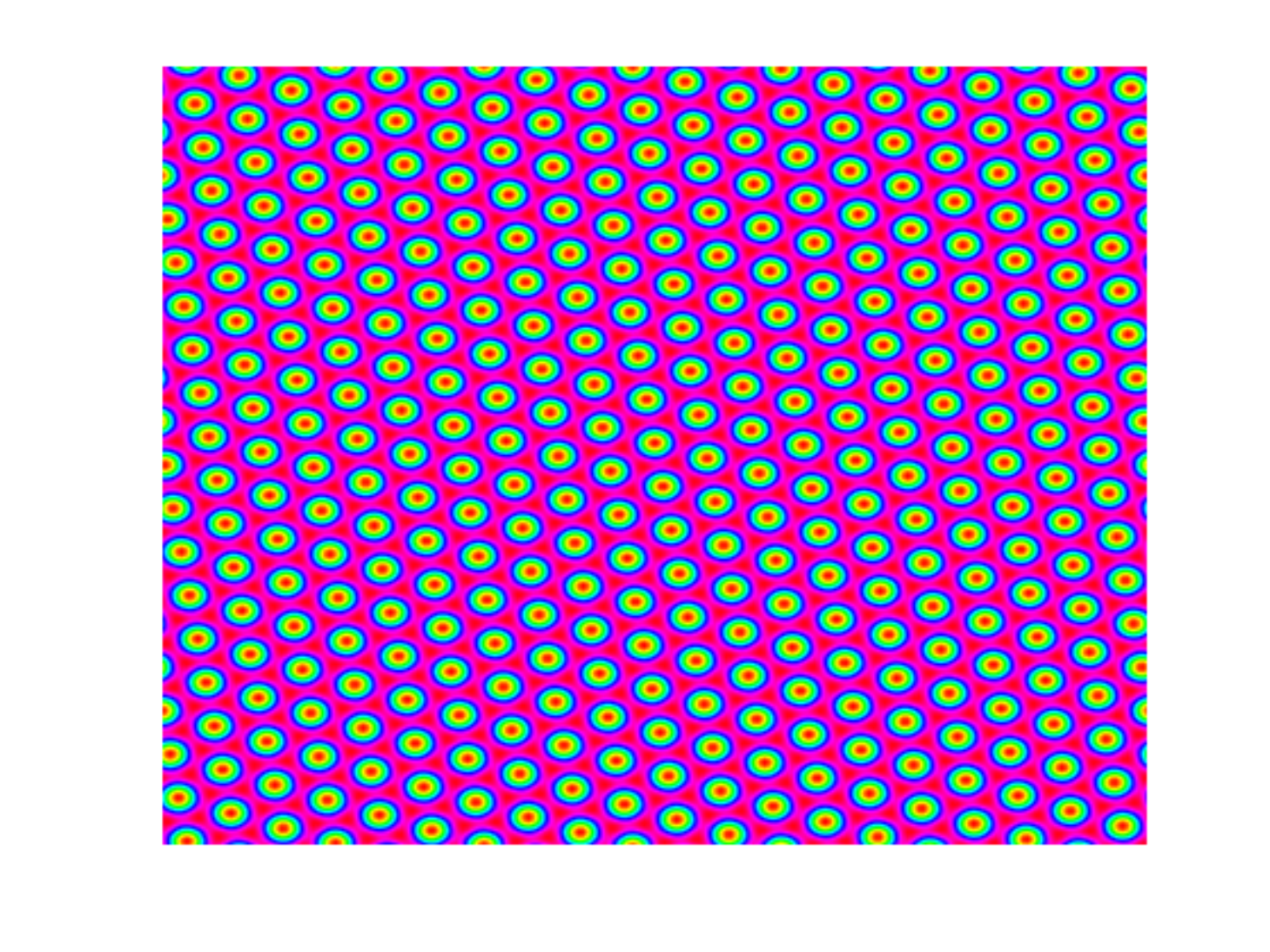}\\
\Xhline{1.2pt}
\end{tabular}
\caption{Configuration evolutions for PFC model by SAV and E-SAV schemes are taken at $t=200$, $500$, $1200$, and $6000$.}\label{fig:fig6}
\end{figure}

\subsection{The Cahn-Hilliard phase field model of the binary fluid-surfactant system}
In this subsection, we use several numerical examples to demonstrate the accuracy, energy stability and efficiency of the proposed schemes based on ME-SAV approach for the Cahn-Hilliard phase field model of the binary fluid-surfactant system. In the following two examples, we set the domain $\Omega=[0,2\pi]\times[0,2\pi]$.  Other than that, the default values of parameters are given as follows,
$$M_\phi=M_\rho=2.5e-4,\quad\alpha=2.5e-4,\quad\beta=1,\quad\theta=0.3,\quad\epsilon=0.05,\quad\eta=0.08,\quad\rho_s=1.$$

\textbf{Example 6}: we first test the error and the convergent rates of the proposed first-order ME-SAV scheme. The initial conditions are as follows
\begin{equation*}
\aligned
&\phi_0(x,y)=0.3{\cos(3x)}+0.5{\cos(y)},\\
&\rho_0(x,y)=0.2{\cos(2x)}+0.25{\sin(y)}.
\endaligned
\end{equation*}

We use the Fourier spectral Galerkin method for spatial discretization with $N=128$. The true solution is unknown and we therefore use the
Fourier Galerkin approximation in the case $\Delta t=1e-5$ and $T=0.1$ as a reference solution. Then, we show the $L^2$ errors of two phase variables $\phi$ and $\rho$ between the numerical solutions and the reference solutions with different time step sizes in Table \ref{tab:tab3}. We observe that the convergence rates of both variables $\phi$ and $\rho$ are all first order accurate.

\begin{table}[h!b!p!]
\small
\centering
\caption{\small The $L_2$ errors, convergence rates for first order scheme in time for ME-SAV approach.}\label{tab:tab3}
\begin{tabular}{cccccccc}
\hline
          &&$\|\phi-\phi^n\|$&&&$\|\rho-\rho^n\|$\\
\cline{1-7}
&$\Delta t$          &$L_2$ error&Rate&&$L_2$ error&Rate\\
\cline{1-7}
           &$1e-2$    &2.5127e-3   &---       &&1.0355e-4   &---   \\
           &$5e-3$    &1.3078e-3   &0.9421    &&5.2007e-5   &0.9935\\
1st-ME-SAV &$2.5e-3$  &6.6740e-4   &0.9705    &&2.6019e-5   &0.9991\\
           &$1.25e-3$ &3.3628e-4   &0.9889    &&1.2972e-5   &1.0041\\
           &$6.25e-4$ &1.6779e-4   &1.0030    &&6.4364e-6   &1.0111\\
\cline{1-7}
\end{tabular}
\end{table}

In next example, we study the phase separation behaviors in the two dimensional space that are called spinodal decomposition by using the first order ME-SAV scheme.

\textbf{Example 7}: The initial conditions are taken as the randomly perturbed concentration fields:
\begin{equation*}
\aligned
&\phi_0(x,y)=0.001rand(x,y)\\
&\rho_0(x,y)=0.2+0.001rand(x,y).
\endaligned
\end{equation*}

We set $\epsilon=0.02$, $\eta=0.005$. In Figure \ref{fig:fig7}, we show the snapshots of two phase variables $\phi$ and $\rho$ which are taken at t = 1, 10, 20, 50, 100, 200, 400, 1000, 1500 and 2000. One can see that the two fluids are well mixed at the beginning. As time goes on, because of the influence of the surface tensions, the two fluids start to decompose to equilibrium. However, a relatively high value of the concentration variable $\rho$ is always driven to be located at the fluid interface. We also plot the evolution of energy curves in Figure \ref{fig:fig8} for Example 7 which indicates that the energy monotonically decays with respect to the time.
\begin{figure}[htp]
\centering
\subfigure[t=1]
{
\includegraphics[width=3.5cm,height=3.5cm]{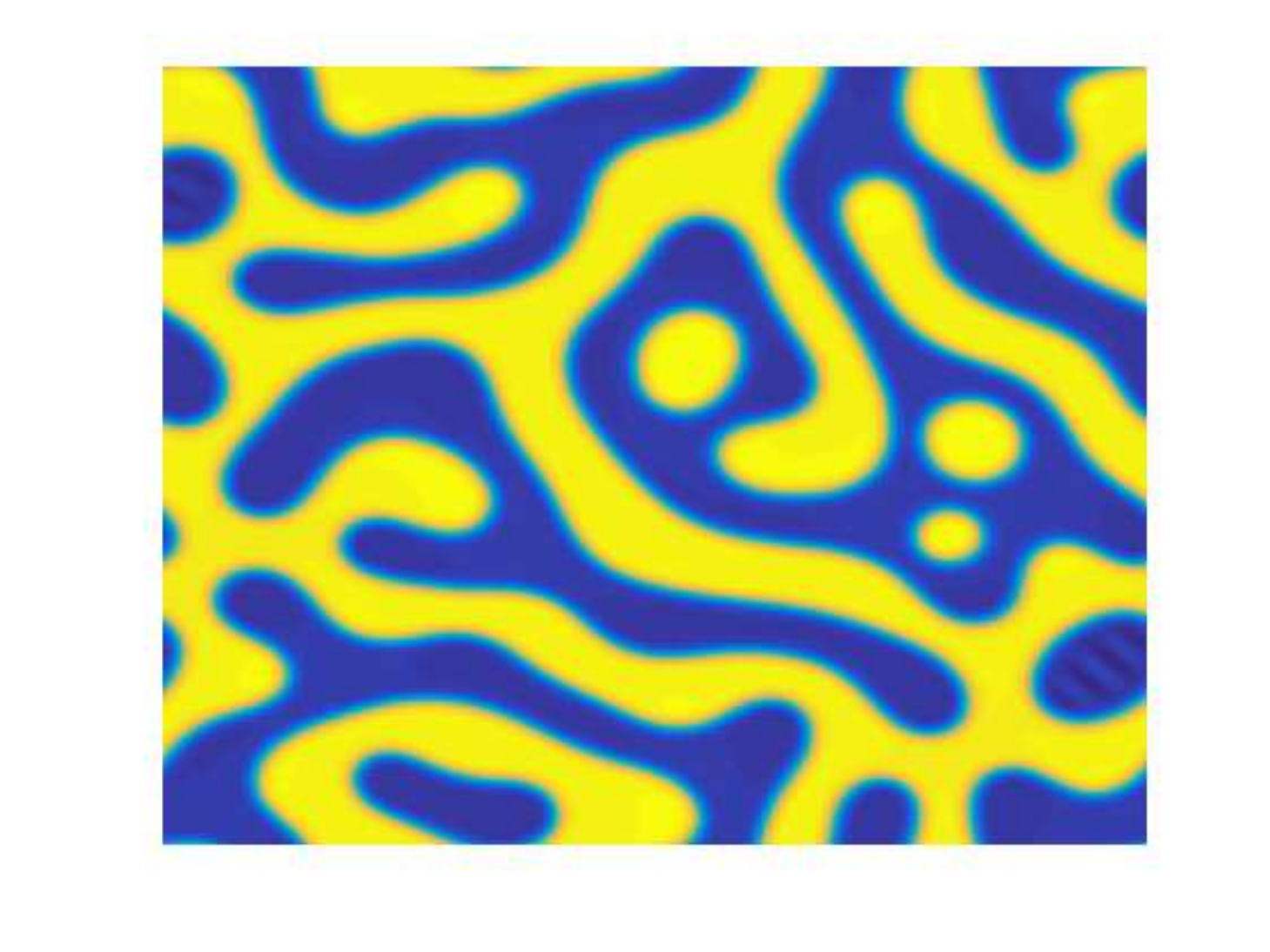}
\includegraphics[width=3.5cm,height=3.5cm]{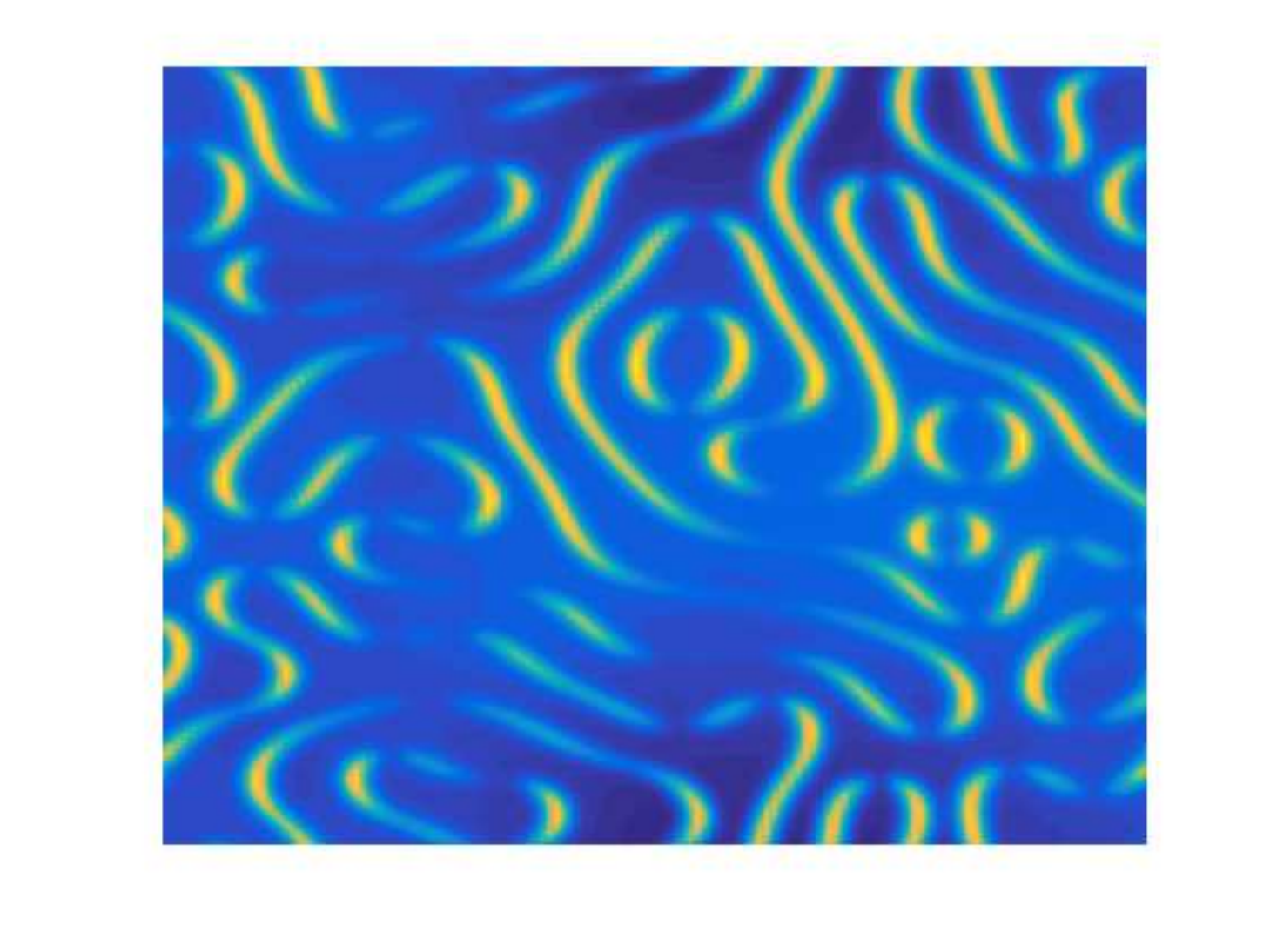}
}
\subfigure[t=100]{
\includegraphics[width=3.5cm,height=3.5cm]{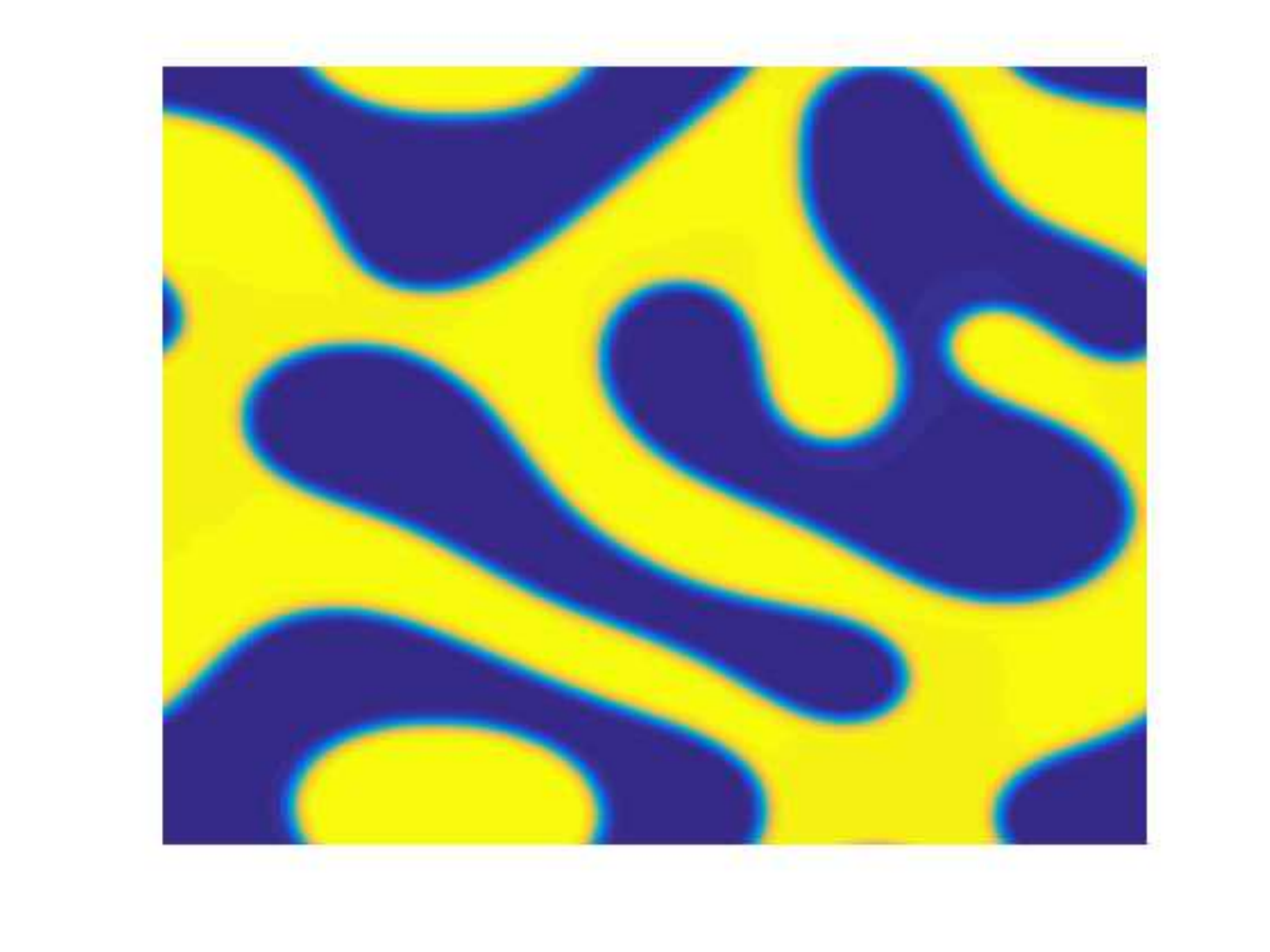}
\includegraphics[width=3.5cm,height=3.5cm]{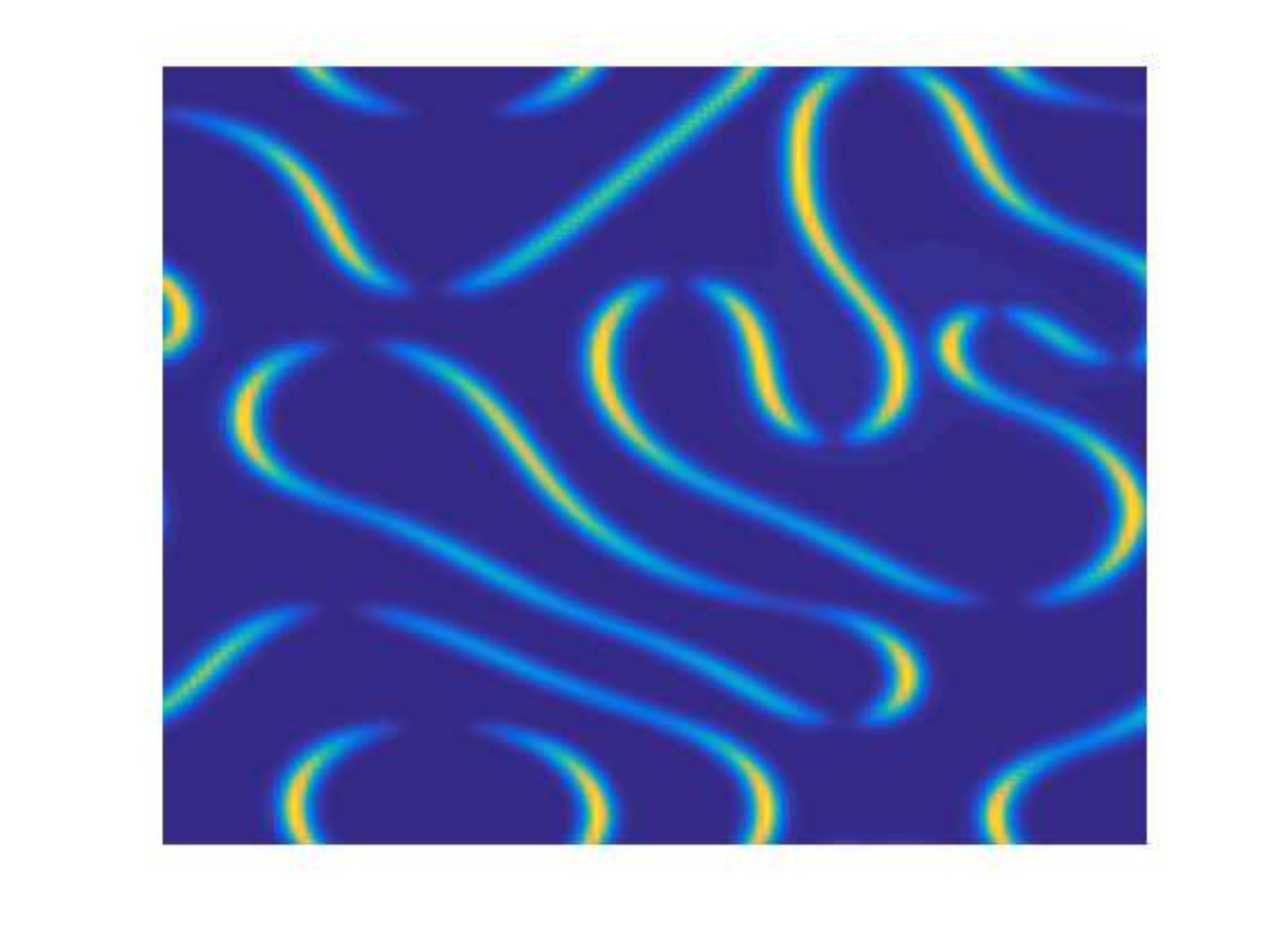}
}
\subfigure[t=800]
{
\includegraphics[width=3.5cm,height=3.5cm]{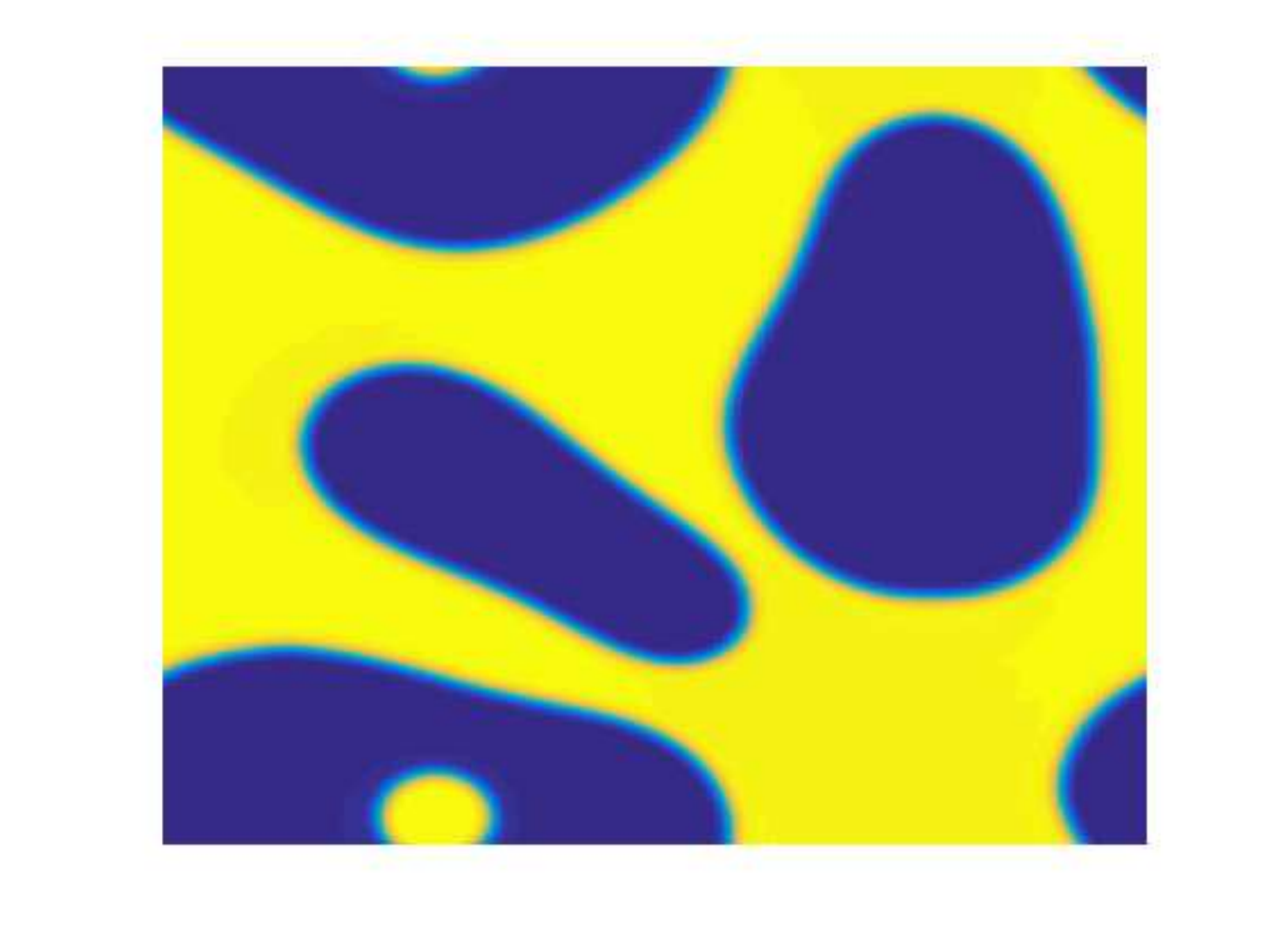}
\includegraphics[width=3.5cm,height=3.5cm]{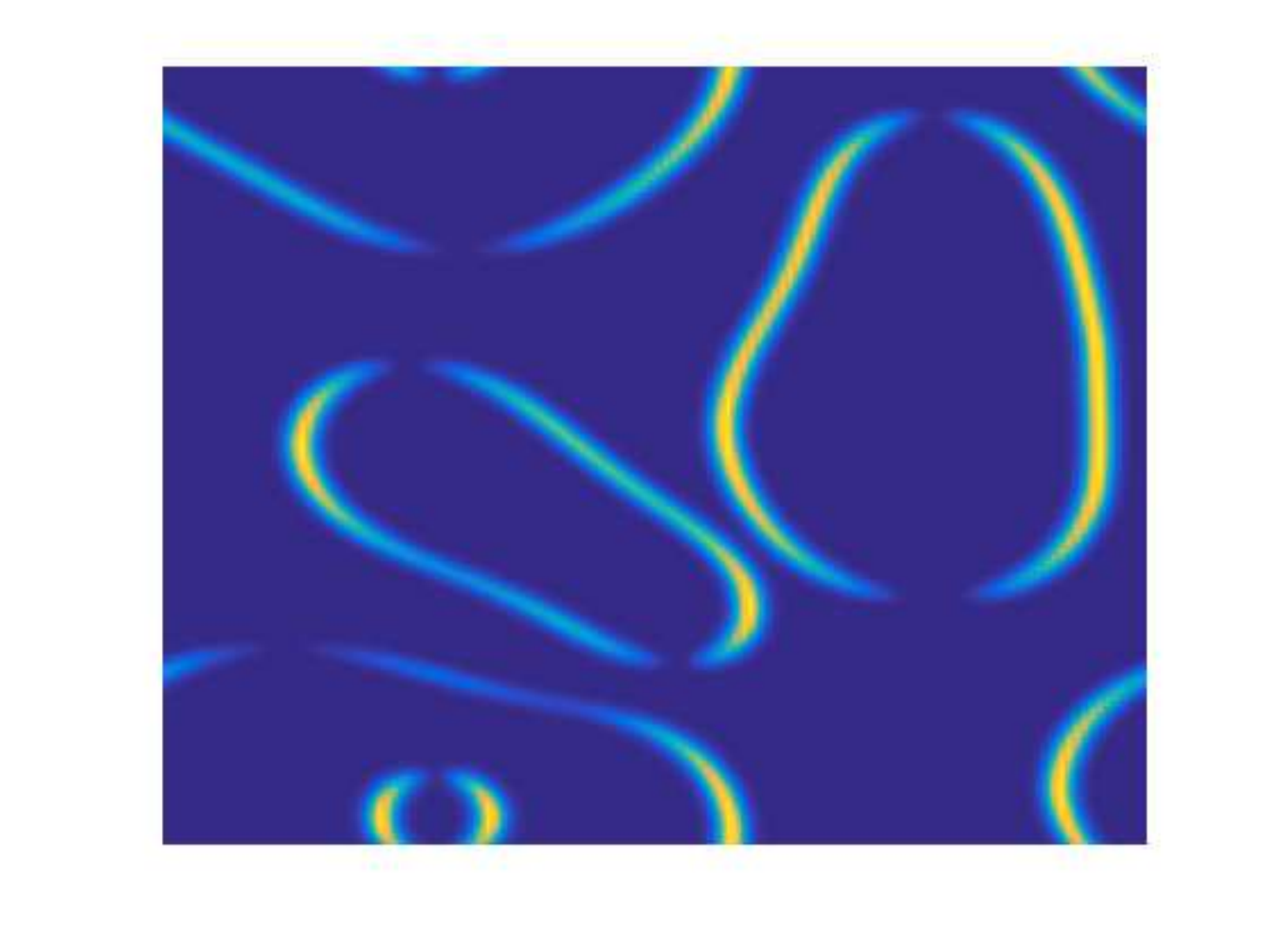}
}
\subfigure[t=1500]
{
\includegraphics[width=3.5cm,height=3.5cm]{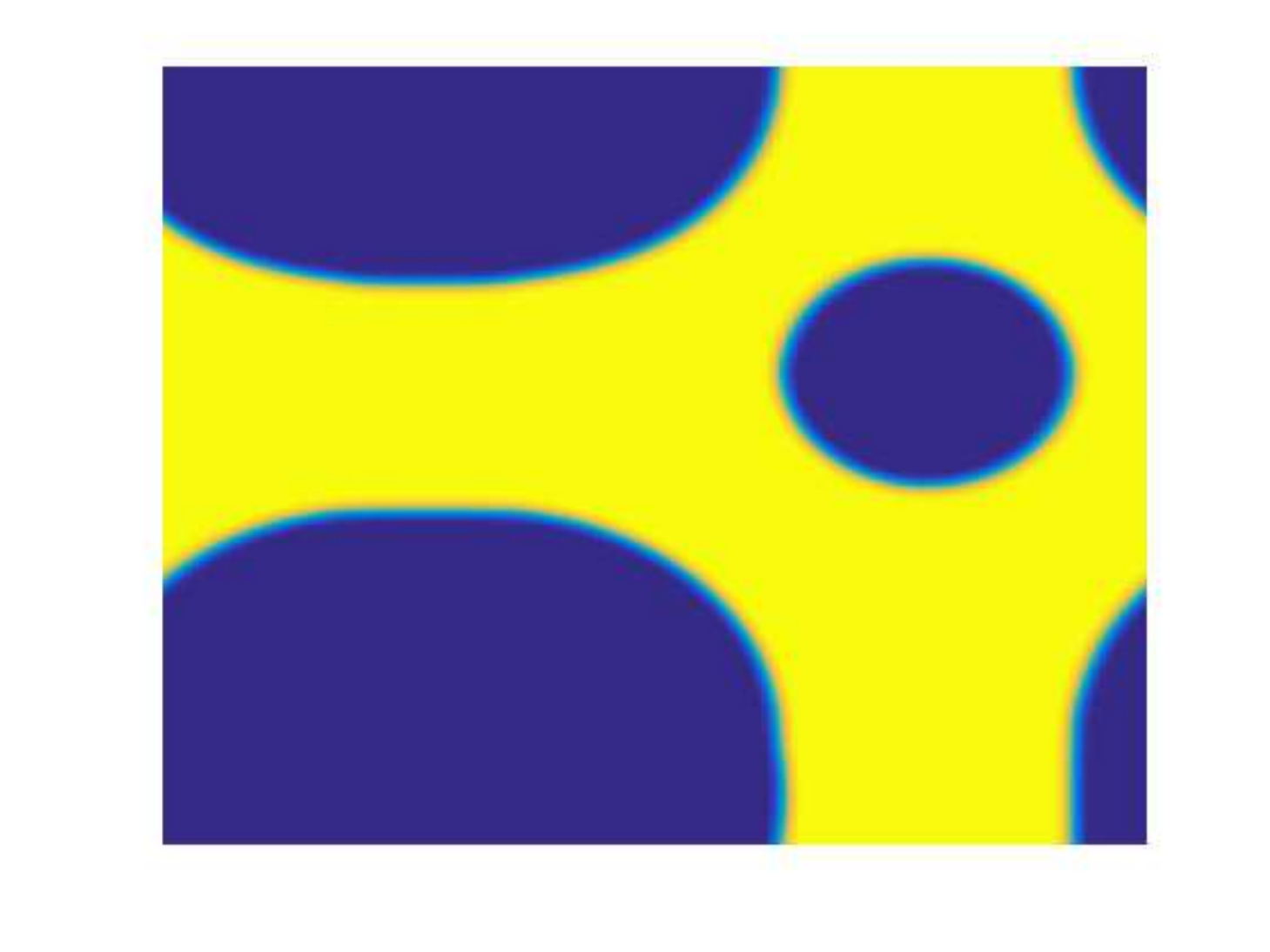}
\includegraphics[width=3.5cm,height=3.5cm]{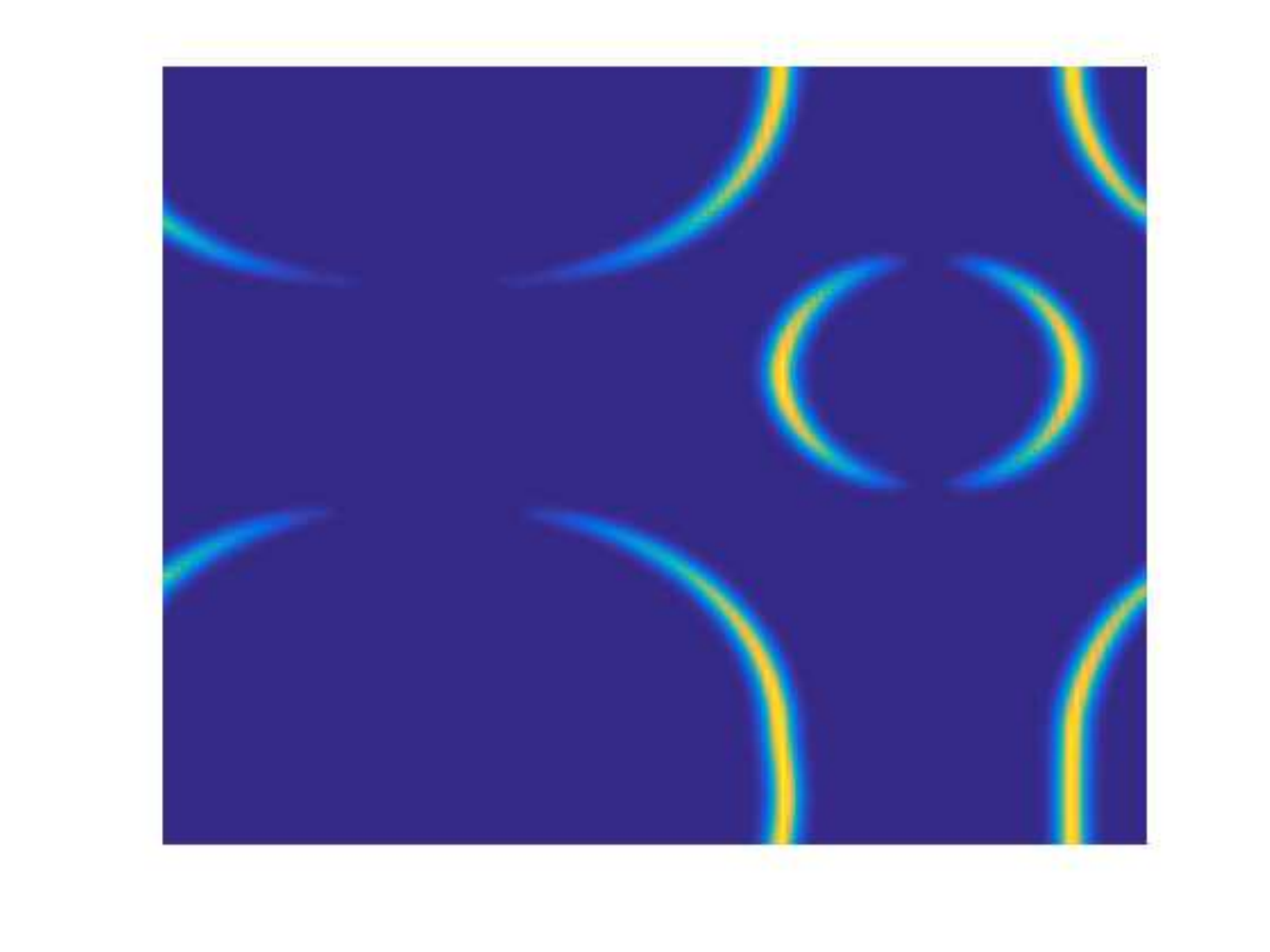}
}
\caption{Snapshots of the phase variable $\phi$ (left) and $\rho$ (right) are taken at t=1, 100, 800, 1500 for example 7.}\label{fig:fig7}
\end{figure}
\begin{figure}[htp]
\centering
\includegraphics[width=10cm,height=7cm]{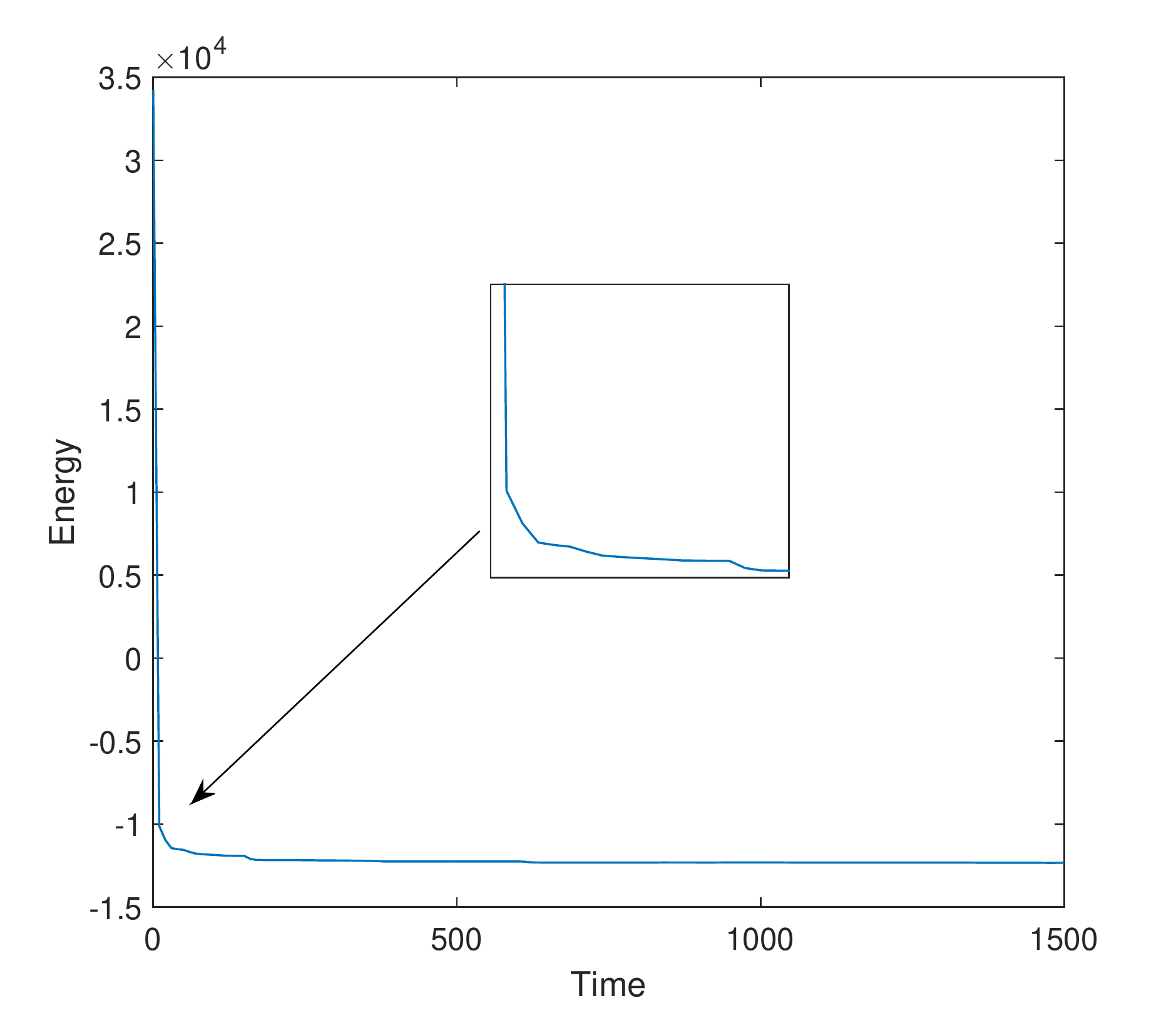}
\caption{Time evolution of the free energy functional for spinodal decomposition.}\label{fig:fig8}
\end{figure}
\section*{Acknowledgement}
No potential conflict of interest was reported by the author. We would like to acknowledge the assistance of volunteers in putting together this example manuscript and supplement.
\bibliographystyle{siamplain}
\bibliography{Reference}

\end{document}